\documentclass{amsart}
\usepackage{amssymb, amsmath, latexsym}

\renewcommand{\baselinestretch}{\baselinestretch}
\renewcommand{\baselinestretch}{1.1}
\numberwithin{equation}{section}

\newtheorem{thm}{Theorem}[section]
\newtheorem{lem}[thm]{Lemma}
\newtheorem{cor}[thm]{Corollary}
\newtheorem{prop}[thm]{Proposition}

\theoremstyle{definition}
\newtheorem{defn}[thm]{Definition}

\theoremstyle{remark}
\newtheorem{rmk}[thm]{Remark}
\newtheorem{exam}[thm]{Example}

\numberwithin{equation}{section}

\newcommand{\ra}{{\rightarrow}}

\newcommand{\gen}{\text{gen}}

\newcommand{\ord}{\text{ord}}

\newcommand{\z}{{\mathbb Z}}
\newcommand{\q}{{\mathbb Q}}

\newcommand{\scale}{\mathfrak s}
\newcommand{\norm}{\mathfrak n}

\newcommand{\h}{{\mathbb H}}

\newcommand{\plane}{{\mathbb A}}
\newcommand{\Lp}{\Lambda_{p}}
\newcommand{\lp}{\lambda_{p}}
\newcommand{\Lpinv}{\Gamma_p^L}

\newcommand{\w}{\mathfrak w}
\newcommand{\lab}{\text{label}}
\newcommand{\lb}{[\![\,\,}
\newcommand{\rb}{\,\,]\!]}
\newcommand{\bI}{\mathbf I}
\newcommand{\bA}{\mathbf A}
\newcommand{\gH}{\mathfrak H}
\newcommand{\bsim}{\!\!\sim}
\begin{document}

\title{Class numbers of ternary quadratic forms}

\author{Wai Kiu Chan}
\address{Department of Mathematics and Computer Science, Wesleyan University, Middletown CT, 06459, USA}
\email{wkchan@wesleyan.edu}

\author{Byeong-Kweon Oh}
\address{Department of Mathematical Sciences and Research Institute of Mathematics, Seoul National University, Seoul 151-747, Korea}
\email{bkoh@snu.ac.kr}
\thanks{This work of the second author was supported by the National Research Foundation of Korea(NRF) grant funded by the Korea government(MEST) (No. 2011-0016437).}

\subjclass[2010]{Primary 11E12, 11E20, 11E41}

\keywords{Class numbers, quadratic forms}


\begin{abstract}
G.L. Watson \cite{watson1, watson2} introduced a set of transformations, called Watson transformations by most recent authors, in his study of the arithmetic of integral quadratic forms.  These transformations change an integral quadratic form to another integral quadratic form  with a smaller discriminants, but preserve many arithmetic properties at the same time.  In this paper, we study the change of class numbers of positive definite ternary integral quadratic formula along a sequence of Watson transformations, thus providing a new and effective way to compute the class number of positive definite ternary integral quadratic forms.  Explicit class number formulae for many genera of positive definite ternary integral quadratic forms are derived as illustrations of our method.
\end{abstract}

\maketitle

\section{Introduction}

Determining the class number of a positive definite integral quadratic form is a classical and important problem in number theory.  Ternary integral quadratic forms receive much attention because of their many connections to other areas of mathematics.   A notable one among all these connections is the correspondence between ternary integral quadratic forms and orders in quaternion algebras.   In the case of ternary quadratic forms over $\z$, this correspondence leads to a bijection between similarity classes of positive definite ternary quadratic forms over $\z$ and isomorphism classes of orders in definite quaternion algebras over $\q$.   Because of this bijection,  computing the class numbers of positive definite ternary quadratic forms over $\z$ is tantamount to determining the type numbers of orders in definite quaternion algebras over $\q$.  By applying the Selberg Trace Formula, Pizer \cite{p2} obtains explicit formulae for the type numbers of all Eichler orders (they are called  {\em canonical orders} in \cite{p2}). A formula for all orders is obtained by K\"{o}rner \cite{ok}, but for numerical applications his formula requires the computation of the so called restricted embedding numbers of quadratic orders into quaternion orders, which can be achieved only for some special orders using results of \cite{hiji, P, p1, p2}.

In this paper, we look at the problem of computing the class number of positive definite ternary quadratic form from a different perspective.  The backbone of our approach is a set of transformations, now called Watson Transformations, which is first used by Watson in his doctoral thesis \cite{watson1} and is first in print in his paper \cite{watson2}.   The precise definition of these transformations will be given in Section \ref{Watson}.  They have been reformulated in the geometric language of quadratic spaces and lattices by many recent authors (see, for example \cite{ce, co}), and it is this language we will be using to conduct our discussion throughout this paper.   Unexplained notation and terminology from the theory of quadratic spaces and lattices will follow those of O'Meara's book \cite{om}.  For convenience, a quadratic space is always a positive definite quadratic space over the field of rational numbers $\q$, and the term ``lattice" always refers to a $\z$-lattice on a (not necessarily fixed) quadratic space.  For a lattice $L$, $\gen(L)$ will denote the genus of $L$, and $\gen(L)/\bsim$ is the set of (isometry) classes in $\gen(L)$.  The latter is a finite set and its cardinality is called the class number of $L$, denoted $h(L)$.  The class of $L$ in $\gen(L)$  is denoted by $[L]$.  We will refer to the lattice $L$ as ``primitive" if its scale ideal $\scale(L)$ is  $\z$.  The norm ideal of $L$, denoted $\norm(L)$, is the ideal of $\z$ generated by the set $Q(L)$.  We write $L \cong A$ whenever $A$ is a Gram matrix of $L$, and the discriminant $dL$ is defined to be the determinant of $A$.  A diagonal matrix with $a_1, \ldots, a_n$ on the diagonal is denoted by $\langle a_1, \ldots, a_n \rangle$.

Let $L$ be a primitive ternary lattice, and $m$ be a positive integer.     The Watson transformation at $m$ first takes a sublattice $\Lambda_{m}(L)$, and then scales the quadratic form on this sublattice so that the end result is a primitive ternary lattice denoted $\lambda_{m}(L)$.  It follows from the properties for the Watson transformations developed in \cite{ce, co} that  we can always ``descend" $L$, via a sequence of Watson transformations at different primes or at 4, to a primitive ternary maximal lattice $K$ satisfying some specific local conditions.  We call these lattices {\em stable} and they will be discussed thoroughly in Section \ref{stable}.   In this paper, we will address the important question of determining $h(L)$ from the information we could gather from $\gen(K)$.

Let $p$ be an odd prime.  For any $N \in \gen(\Lp(L))$, let $\Lpinv(N)$ be the set of lattices $M \in \gen(L)$ such that $\Lp(M) = N$.  Then $\gen(L)/\bsim$ is the disjoint union of the classes in $\Lpinv(N)$, where $N$ runs through a complete set of class representatives in $\gen(\Lp(L))$.   Theorem \ref{class} provides explicit formulae for the size of each $\Lpinv(N)/\bsim$ in terms of several effectively computable invariants derived from a set of data called the label of $N$ (see Definition \ref{label}), which depends only on the order of the orthogonal group of $N$ and the symmetries of $N$.   As a matter of fact, we obtain much more in Theorem \ref{class}: we have explicit formulae for the number of classes of lattices in $\Lpinv(N)$ whose isometry groups are of a given order.  In Section \ref{changeoflabels}, we will describe how to determine the labels of the lattices in $\Lpinv(N)$, and in Section \ref{stable} we completely determine the labels of all the classes in the genus of a stable ternary lattice.  So, all these together provides an effective solution to the problem of computing the class number of $L$ using the labels of the lattices in $\gen(K)$, if $L$ descends to a stable lattice $K$ via a sequence of Watson transformations at the odd primes.

The remaining task is to obtain the analogs of all the aforementioned results for the Watson transformations at the prime 2.   Although the line of attack in this case will be essentially the same, different tactics will be employed at various steps of the proof due to the lack of uniqueness of Jordan structures at the prime 2.   We will address them in a second paper.

\section{Watson transformations and class numbers} \label{Watson}

Let $m$ be a positive integer.  For any lattice $L$, let
$$\Lambda_m(L) = \{x \in L : Q(x + y) \equiv Q(y) \mbox{ mod $m$ for all } y \in L\},$$
and for every prime number $q$, let
$$\Lambda_m(L_q) = \{x \in L_q : Q(x + y) \equiv Q(y) \mbox{ mod $m$ for all } y \in L_q\}.$$
It is clear that $\Lambda_m(L)$ is a sublattice of $L$ and $\Lambda_m(L) \subseteq \{x \in L : Q(x) \equiv 0 \mod m\}$.  Moreover, $\Lambda_m(L)_q = \Lambda_m(L_q)$ for every prime number $q$.  The readers are referred to \cite{ce} and \cite{co} for more properties of the operators $\Lambda_m$.  We denote by $\lambda_m(L)$ the primitive lattice obtained by scaling the quadratic map on $\Lambda_m(L)$ suitably.  The mappings $\lambda_m$ collectively are called the Watson transformations.\footnote{Note that the Watson transformations $\lambda_m$ defined in \cite{ce} and \cite{co} are slightly different than the ones we use here: the lattices $L$ and  $\lambda_m(L)$ in \cite{ce} and \cite{co} are even primitive, that is, their norm ideals are $2\z$.  But this difference can be easily rectified by scaling the quadratic maps by  suitable 2-powers.}

In what follows, $p$ is always a prime number.  The following describes what $\Lambda_p$ does to $L$ when $p$ is odd.

\begin{lem} \label{lambda}
Let $p$ be an odd prime.  Suppose that $L_p = M_p \perp M'_p$, where $M_p$ is unimodular and $\mathfrak n(M'_p) \subseteq p\z_p$.  Then
$$\Lambda_{p}(L)_p = pM_p \perp M'_p.$$
In particular, if $m$ is an odd squarefree positive integer and $\ord_p(dL) \leq 1$ for all $p \mid m$, then $\lambda_m^2(L) = L$.
\end{lem}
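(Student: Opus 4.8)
The plan is to prove the local identity first, and then to obtain the involution statement by applying that identity twice while carefully tracking the rescaling.

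For the formula $\Lambda_{p}(L)_p = pM_p \perp M'_p$: since $\Lambda_p(L)_q = \Lambda_p(L_q)$ and $p\z_q = \z_q$ for every prime $q \neq p$, only the prime $p$ plays a role, and there I would work directly with the definition. Writing $B$ for the symmetric bilinear form with $B(v,v) = Q(v)$, one has $Q(x+y) - Q(y) = Q(x) + 2B(x,y)$; taking $y = 0$ shows that $x \in \Lambda_p(L)_p$ forces $Q(x) \in p\z_p$, and then, because $2 \in \z_p^\times$, the remaining condition is exactly $B(x,L_p) \subseteq p\z_p$. Conversely these two conditions together suffice. Now decompose $x = u + u'$ along $L_p = M_p \perp M'_p$: the cross terms vanish, so $B(x,L_p)\subseteq p\z_p$ if and only if $B(u,M_p)\subseteq p\z_p$ and $B(u',M'_p)\subseteq p\z_p$. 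Unimodularity of $M_p$ identifies $M_p$ with $\operatorname{Hom}_{\z_p}(M_p,\z_p)$ via $w\mapsto B(w,\cdot)$, so the first condition is precisely $u\in pM_p$; the second is automatic, since $p$ odd gives $\mathfrak{s}(M'_p) = \mathfrak{n}(M'_p)\subseteq p\z_p$. Finally, for $x \in pM_p\perp M'_p$ one has $Q(x) \in p^2\z_p + \mathfrak{n}(M'_p)\subseteq p\z_p$, so the norm condition is automatic too, and $\Lambda_p(L)_p = pM_p \perp M'_p$.

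For the ``in particular'' part, write $m = p_1\cdots p_k$ and set $e_i = 1$ if $\ord_{p_i}(dL) = 1$ and $e_i = 2$ if $\ord_{p_i}(dL) = 0$, and let $a = \prod_i p_i^{e_i}$. Since $m$ is squarefree, $\Lambda_m(L)_{p_i} = \Lambda_{p_i}(L_{p_i})$ and $\Lambda_m(L)_q = L_q$ for $q \nmid m$, so the first part (applied at each $p_i$, with the unimodular block of rank $2$ or $3$ according to the case) gives $\mathfrak{s}(\Lambda_m(L)) = a\z$; hence $\lambda_m(L)$ is $\Lambda_m(L)$ on the space $(\q L, a^{-1}Q)$. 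A short check of Jordan scales then shows that $\lambda_m(L)$ again satisfies the hypothesis of the lemma at every $p_i$: when $\ord_{p_i}(dL)=1$ its $p_i$-unimodular component is $M'_{p_i}$ (with the rescaled form) and the complementary component is the rescaled $p_iM_{p_i}$, which is $p_i$-modular and so, $p_i$ being odd, has norm in $p_i\z_{p_i}$; when $\ord_{p_i}(dL)=0$ the whole block $p_iL_{p_i}$ (with the rescaled form) is unimodular. Applying the first part once more therefore yields $\Lambda_m(\lambda_m(L))_{p_i} = p_i^{e_i}L_{p_i}$ for each $i$ and $\Lambda_m(\lambda_m(L))_q = L_q$ for $q\nmid m$; since $aL_{p_i} = p_i^{e_i}L_{p_i}$ and $a$ is a unit in $\z_q$ for $q\nmid m$, this identifies $\Lambda_m(\lambda_m(L))$ with $aL$ as a lattice on $(\q L, a^{-1}Q)$.

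Rescaling this to a primitive lattice multiplies the form by $a^{-1}$ once more, so $\lambda_m^2(L) = (aL, a^{-2}Q)$, and the module isomorphism $x\mapsto ax$ of $L$ onto $aL$ satisfies $a^{-2}Q(ax) = Q(x)$, i.e.\ it is an isometry $L \to \lambda_m^2(L)$; hence $\lambda_m^2(L) = L$. The main obstacle is the bookkeeping in this second half: one must correctly identify the Jordan splitting of $\lambda_m(L)$ at each $p_i$ \emph{after} the global rescaling by $a^{-1}$, so that the lemma can be invoked a second time, and one must keep the accumulated scale factor straight in order to recognize $\lambda_m^2(L)$ as $(aL,a^{-2}Q)$; the two cases $\ord_{p_i}(dL)=0$ and $\ord_{p_i}(dL)=1$ run in parallel but both need checking. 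Everything else is routine once the local formula is available.
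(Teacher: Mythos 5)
Your proof is correct. The paper itself does not argue this lemma at all--it cites \cite[Lemma 2.3]{ce} and \cite[Lemma 2.1]{co} for the first assertion and declares the second a direct consequence--and your computation (reducing membership in $\Lambda_p(L_p)$ to $Q(x)\in p\z_p$ and $B(x,L_p)\subseteq p\z_p$ via $2\in\z_p^\times$, then using unimodularity of $M_p$ and $\mathfrak{s}(M'_p)=\mathfrak{n}(M'_p)$ at odd $p$) is exactly the standard argument behind those citations, and your bookkeeping of the scale factor $a=\prod p_i^{e_i}$ in the second half correctly identifies $\lambda_m^2(L)$ with $(aL,a^{-2}Q)\cong L$.
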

\begin{proof}
The first assertion is essentially \cite[Lemma 2.3]{ce} or \cite[Lemma 2.1]{co}, which has the second assertion as a direct consequence.
\end{proof}

\begin{lem} \label{conf}
Let $L$ be a lattice on a quadratic space $V$.  Then $\sigma \circ \Lambda_{p}(L)=\Lambda_{p} \circ \sigma(L)$ for every $\sigma \in O(V)$.
\end{lem}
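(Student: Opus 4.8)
The plan is to unwind the definition of $\Lambda_p$ directly and use that an isometry $\sigma \in O(V)$ preserves both the quadratic form $Q$ and the lattice structure. Recall that
$$\Lambda_p(L) = \{x \in L : Q(x+y) \equiv Q(y) \!\!\pmod p \text{ for all } y \in L\}.$$
First I would show the inclusion $\sigma(\Lambda_p(L)) \subseteq \Lambda_p(\sigma(L))$. Let $x \in \Lambda_p(L)$, so that $Q(x+y) \equiv Q(y) \pmod p$ for every $y \in L$. To test whether $\sigma(x) \in \Lambda_p(\sigma(L))$, pick an arbitrary $z \in \sigma(L)$ and write $z = \sigma(y)$ with $y \in L$. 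Since $\sigma$ is linear and preserves $Q$, we get $Q(\sigma(x) + z) = Q(\sigma(x) + \sigma(y)) = Q(\sigma(x+y)) = Q(x+y) \equiv Q(y) = Q(\sigma(y)) = Q(z) \pmod p$. As $z$ was arbitrary in $\sigma(L)$ and $\sigma(x) \in \sigma(L)$, this shows $\sigma(x) \in \Lambda_p(\sigma(L))$, hence $\sigma(\Lambda_p(L)) \subseteq \Lambda_p(\sigma(L))$.

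For the reverse inclusion, the cleanest route is to apply the inclusion just proved to the isometry $\sigma^{-1} \in O(V)$ and the lattice $\sigma(L)$: this yields $\sigma^{-1}(\Lambda_p(\sigma(L))) \subseteq \Lambda_p(\sigma^{-1}(\sigma(L))) = \Lambda_p(L)$. Applying $\sigma$ to both sides gives $\Lambda_p(\sigma(L)) \subseteq \sigma(\Lambda_p(L))$. Combining the two inclusions yields the asserted equality $\sigma \circ \Lambda_p(L) = \Lambda_p \circ \sigma(L)$.

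There is essentially no main obstacle here; the statement is a formal consequence of the fact that the congruence condition cutting out $\Lambda_p$ is phrased purely in terms of $Q$ and the ambient lattice, both of which are transported faithfully by any element of $O(V)$. The only point requiring a moment of care is the logical quantifier in the definition: one must make sure that as $y$ ranges over $L$, the element $\sigma(y)$ ranges over all of $\sigma(L)$ (which it does, since $\sigma$ is a bijection), so that the "for all" condition is genuinely preserved rather than merely implied in one direction. I would also remark that the same argument, applied verbatim with $L_q$ in place of $L$, shows $\sigma \circ \Lambda_p(L_q) = \Lambda_p \circ \sigma(L_q)$ locally, which is consistent with the identity $\Lambda_m(L)_q = \Lambda_m(L_q)$ recorded earlier; this localized version is what one actually uses when combining Lemma~\ref{conf} with Lemma~\ref{lambda}.
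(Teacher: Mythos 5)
Your proof is correct. The forward inclusion $\sigma(\Lambda_p(L)) \subseteq \Lambda_p(\sigma(L))$ is established exactly as in the paper, by transporting the defining congruence through the isometry (and you are right to flag that $y \mapsto \sigma(y)$ must be a bijection of $L$ onto $\sigma(L)$ for the universal quantifier to survive). The only divergence is in how equality is then concluded: the paper observes that $\sigma(\Lambda_p(L))$ and $\Lambda_p(\sigma(L))$ are sublattices of $\sigma(L)$ with the same discriminant, so the inclusion forces equality; you instead apply the already-proved inclusion to $\sigma^{-1}$ and the lattice $\sigma(L)$, which gives the reverse containment formally. Your route is slightly more self-contained, since it does not require knowing that the index $[M : \Lambda_p(M)]$ depends only on the isometry class of $M$; the paper's route is a one-line appeal to an invariant it uses elsewhere anyway. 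Either finish is perfectly valid.
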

\begin{proof} Let $x \in \Lambda_{p}(L)$ and $z \in \sigma(L)$.  Take $w \in L$ such that $\sigma(w)=z$.
Then
$$
Q(\sigma(x)+z)=Q(x+w)\equiv Q(x)=Q(\sigma(x)) \mod{p},
$$
hence $\sigma(x) \in \Lambda_{p}(\sigma(L))$.  The lemma follows immediately from the observation that $\sigma(\Lambda_{p}(L))$ and $\Lambda_{p}(\sigma(L))$ have the same discriminant.
\end{proof}

\begin{cor} \label{con2} For any lattice $L$, the restriction map induces an injective group homomorphism from $O(L)$ into $O(\Lambda_{p}(L))$.
\end{cor}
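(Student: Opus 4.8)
The plan is to show that the restriction map $\rho\colon O(L)\to O(\Lambda_p(L))$ is a well-defined injective group homomorphism. First I would check that $\rho$ is well-defined. An element $\sigma\in O(L)$ is, by definition, an isometry of the ambient quadratic space $V$ that maps $L$ onto $L$. By Lemma \ref{conf} applied with this $\sigma$, we have $\sigma(\Lambda_p(L))=\Lambda_p(\sigma(L))=\Lambda_p(L)$, so $\sigma$ restricts to a bijective isometry of $\Lambda_p(L)$ onto itself; that is, $\sigma|_{\Lambda_p(L)}\in O(\Lambda_p(L))$. Hence $\rho(\sigma):=\sigma|_{\Lambda_p(L)}$ is a legitimate element of $O(\Lambda_p(L))$.

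Next I would verify that $\rho$ is a group homomorphism, which is immediate: for $\sigma,\tau\in O(L)$ the composite $(\sigma\circ\tau)|_{\Lambda_p(L)}$ equals $(\sigma|_{\Lambda_p(L)})\circ(\tau|_{\Lambda_p(L)})$ as maps on $\Lambda_p(L)$, since restriction of a composition of maps is the composition of the restrictions (all of which land back in $\Lambda_p(L)$ by the previous paragraph). Thus $\rho(\sigma\tau)=\rho(\sigma)\rho(\tau)$.

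Finally, for injectivity, suppose $\rho(\sigma)=\mathrm{id}$ on $\Lambda_p(L)$. Since $\Lambda_p(L)$ is a sublattice of $L$ of full rank in $V$ — indeed $\Lambda_p(L)\supseteq pL$, which already spans $V$ over $\q$ — the isometry $\sigma\in O(V)$ agrees with the identity on a $\q$-spanning set and is therefore the identity on all of $V$, hence on $L$. So $\ker\rho$ is trivial.

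I do not expect any real obstacle here: the only point requiring the earlier work is well-definedness, and that is exactly what Lemma \ref{conf} delivers (in the special case $\sigma(L)=L$); injectivity is purely a matter of $\Lambda_p(L)$ being of full rank, which follows from $pL\subseteq\Lambda_p(L)$. The corollary is essentially a formal consequence of Lemma \ref{conf}.
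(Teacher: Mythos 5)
Your proof is correct and is exactly the argument the paper has in mind (the paper simply says ``This is clear''): well-definedness comes from Lemma \ref{conf} in the special case $\sigma(L)=L$, and injectivity from the fact that $\Lambda_p(L)\supseteq pL$ has full rank, so an isometry of $V$ fixing it pointwise is the identity. Nothing further is needed.
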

\begin{proof} This is clear. \end{proof}

Henceforth, $L$ will always be a primitive ternary lattice on a quadratic space $V$.   It is easy to see that every lattice in $\gen(\Lp(L))$ is of the form $\Lp(M)$ for some $M \in \gen(L)$.  Therefore, $\Lp$ induces a surjective function from $\gen(L)$ onto $\gen(\Lp(L))$, and by Lemma \ref{conf} it induces a surjective function from $\gen(L)/\bsim$ to $\gen(\Lp(L))/\!\!\sim$.     Since $\Lp(L)$ and $\lp(L)$ are only different by a scaling on the quadratic maps,  $\lp$ also induces surjective functions from $\gen(L)$ onto $\gen(\lp(L))$ and from $\gen(L)/\bsim$ onto $\gen(\lp(L))/\bsim$, respectively.

Let $\Lpinv(N)$ be the set of lattices $M \in \gen(L)$ such that $\Lp(M) = N$, and $\Lpinv(N)/\bsim$ be the set of classes $[M]$ in $\gen(L)$ such that $\Lp(M) = N$.  Clearly,
$$h(L) = \sum_{[N]\in \gen(\Lp(L))} \vert \Lpinv(N)/\bsim \vert.$$

For simplicity, we let $h_{2d}(N)$ be the number of classes in $\Lpinv(N)$ having an isometry group of order $2d$, but keep in mind that this number depends also on $L_p$.  It is clear that
\begin{equation}
\vert \Lpinv(N)/\bsim\vert =  \sum_{d}{}^{'} h_{2d}(N), \label{h2}
\end{equation}
where in the summation $\sum'$, $d$ runs through all the positive divisor of $\vert O^+(N) \vert$.

Let $\sigma \in O(N)$ and $M \in \Lpinv(N)$.  Since $\Lp(\sigma(M)) = \sigma(\Lp(M)) = \sigma(N) = N$, $\sigma(M)$ belongs to $\Lpinv(N)$.  So, $O(N)$ acts on the set $\Lpinv(N)$.   Moreover, Lemma \ref{conf} implies that if $\tau(M) \in \Lpinv(N)$ for some isometry $\tau$ of $V$, then $\tau$ is in $O(N)$.  Therefore,
\begin{equation}
\sum_{d}{}^{'} h_{2d}(N) = \frac{1}{\vert O^+(N) \vert} \sum_{\sigma \in O^+(N)} \vert \Lpinv(N)_\sigma \vert, \label{h1}
\end{equation}
where $\Lpinv(N)_\sigma$ is the set of fixed points of $\sigma$.  The last equality comes from the observation that $\Lpinv(L)_\sigma = \Lpinv(L)_{-\sigma}$.

\section{The cardinality of $\Lpinv(N)$}

Since the size of the orbit containing $M \in \Lpinv(N)$ under the action of $O(N)$  is $\vert O(N)\vert/\vert O(M)\vert$, we have the equation
\begin{equation}
\sum_d{}^{'} \frac{\vert O^+(N)\vert}{d} h_{2d}(N) = \vert \Lpinv(N) \vert. \label{first}
\end{equation}

\begin{prop} \label{ani1-1}
For any $N \in \gen(\Lambda_p(L))$
$$\vert \Lpinv(N) \vert = \frac {\w(L)}{\w(\Lambda_p(L))},$$
where $\w(L)$ and $\w(\Lp(L))$ are the mass of $\gen(L)$ and $\gen(\Lp(L))$ respectively.
\end{prop}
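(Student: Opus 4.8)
The plan is to prove two things: that $|\Lpinv(N)|$ is the same for every $N \in \gen(\Lp(L))$, and that this common value equals $\w(L)/\w(\Lp(L))$. The first is a local statement at $p$; the second is a bookkeeping of masses.

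First I would reduce $|\Lpinv(N)|$ to a local count. If $M \in \gen(L)$ and $\Lp(M) = N$, then $N \subseteq M$, and for every prime $q \neq p$ the operator $\Lp$ is the identity on $\z_q$-lattices (the congruence modulo $p$ being vacuous over $\z_q$, in which $p$ is a unit), so $M_q = \Lp(M)_q = N_q$. Conversely, given a $\z_p$-lattice $M^\sharp$ on $V_p$ with $N_p \subseteq M^\sharp$, $M^\sharp \cong L_p$ and $\Lp(M^\sharp) = N_p$, the local--global correspondence for lattices \cite[81:14]{om} yields a unique lattice $M$ on $V$ with $M_p = M^\sharp$ and $M_q = N_q$ for $q \neq p$; using $N \in \gen(\Lp(L))$ together with the triviality of $\Lp$ away from $p$, one checks that $M \in \gen(L)$ and $\Lp(M) = N$. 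So $M \mapsto M_p$ is a bijection from $\Lpinv(N)$ onto the set $S(N_p)$ of $\z_p$-lattices $M^\sharp$ on $V_p$ with $N_p \subseteq M^\sharp$, $M^\sharp \cong L_p$ and $\Lp(M^\sharp) = N_p$. Arguing as in Lemma \ref{conf}, any isometry $N_p \to N'_p$ of $\z_p$-lattices extends (by Witt's theorem, both being of full rank) to an isometry of $V_p$ that carries $S(N_p)$ bijectively onto $S(N'_p)$; hence $|S(N_p)|$ depends only on the isometry class of $N_p$. Since $N \in \gen(\Lp(L))$ forces $N_p \cong \Lp(L)_p$, the number $c := |\Lpinv(N)| = |S(\Lp(L)_p)|$ does not depend on $N$.

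Next I would compare masses. By Lemma \ref{conf}, $[M] \mapsto [\Lp(M)]$ is a well-defined surjection of $\gen(L)/\bsim$ onto $\gen(\Lp(L))/\bsim$ whose fibre over $[N]$ is, after replacing each $M$ by an isometric copy, exactly the set of $O^+(N)$-orbits on $\Lpinv(N)$ (as in the discussion preceding \eqref{h1}). For $M \in \Lpinv(N)$, Corollary \ref{con2} gives $O^+(M) \subseteq O^+(N)$, and this subgroup is precisely the stabilizer of $M$ under the $O^+(N)$-action, so the orbit of $M$ has $|O^+(N)|/|O^+(M)|$ elements. Summing $1/|O^+(M)|$ over orbit representatives in $\Lpinv(N)$ therefore gives $|\Lpinv(N)|/|O^+(N)| = c/|O^+(N)|$, whence
$$\w(L) = \sum_{[M] \in \gen(L)/\bsim} \frac{1}{|O^+(M)|} = \sum_{[N] \in \gen(\Lp(L))/\bsim} \frac{c}{|O^+(N)|} = c\,\w(\Lp(L)).$$
Dividing by $\w(\Lp(L))$ yields $|\Lpinv(N)| = c = \w(L)/\w(\Lp(L))$.

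I expect the main obstacle to be the local reduction: one has to be sure that every admissible local lattice $M^\sharp$ genuinely globalizes into $\gen(L)$ itself (not merely into the genus of some lattice), that distinct $M^\sharp$ yield distinct global $M$, and that the count $|S(N_p)|$ truly depends only on the isometry class of $N_p$ — all of which rest on $\Lp$ being local at $p$ and equivariant under isometries. One could also try to invoke Siegel's mass formula to express $\w(L)/\w(\Lp(L))$ in terms of local densities at $p$ (the contributions at all other places cancelling) and then identify that ratio with $|S(\Lp(L)_p)|$ directly; the argument above avoids that local-density computation at the cost of the globalization input.
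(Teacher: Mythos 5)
Your proof is correct and follows essentially the same route as the paper: an orbit--stabilizer count for the $O(N)$-action on $\Lpinv(N)$, the observation that $\vert\Lpinv(N)\vert$ is independent of the choice of $N\in\gen(\Lp(L))$, and then a summation over the genus to compare masses. The only substantive difference is that you supply the localization/globalization argument justifying the independence of $\vert\Lpinv(N)\vert$, a step the paper dismisses as ``easy to see''; your use of $O^+$ in place of $O$ merely rescales both masses by $2$ and does not affect the ratio.
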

\begin{proof}
Since $O(N)$ acts on the set $\Lpinv(N)$, we have
$$\vert \Lpinv(N) \vert = \sum_{[M] \in \Lpinv(N)} \frac{\vert O(N)\vert}{\vert O(M)\vert}.$$
However, if $N'$ is another lattice in $\gen(\Lp(L))$, then it is easy to see that there is a bijection between $\Lpinv(N)$ and $\Lpinv(N')$.  Therefore, $\vert \Lpinv(N) \vert$ is independent of the choice of $N$ in $\gen(\Lp(L))$, and hence
\begin{eqnarray*}
\w(\Lp(L)) \, \vert \Lpinv(N) \vert & = & \sum_{[N'] \in \gen(\Lp(L))} \frac{1}{\vert O(N')\vert}\sum_{[M] \in \Lpinv(N)}
                                            \frac{\vert O(N)\vert}{\vert O(M) \vert}\\
    & = & \sum_{[N'] \in \gen(\Lp(L))} \sum_{[M] \in \Lpinv(N')} \frac{1}{\vert O(M) \vert}\\
    & = &  \sum_{[M] \in \gen(L)} \frac{1}{\vert O(M) \vert}\\
    & = & \w(L).
\end{eqnarray*}
\end{proof}

From now on, till the end of the paper, {\em the prime $p$ is always assumed to be odd} and $\ord_p(dL) \geq 2$.  Using the Minkowski-Siegel mass formula \cite[Theorem 6.8.1]{ki}, we have
$$\frac{\w(L)}{\w(\Lambda_p(L))}=\left(\frac{dL}{d(\Lambda_p(L))}\right)^2 \frac{\alpha_p(\Lambda_p(L)_p,\Lambda_p(L)_p)}{\alpha_p(L_p,L_p)},$$
where $\alpha_p(\, , \,)$ are the local densities.   These local densities can be computed by \cite[Theorem 5.6.3]{ki}.  The values of $\frac{\w(L)}{\w(\Lp(L))}$ are displayed in the Table I.  They are arranged by the Jordan decomposition of $L_p$.  The quantity $e_{ij}$ in the table is defined as follows.  Suppose that $L_p \simeq \langle \epsilon_1, p^{\alpha}\epsilon_2, p^{\beta} \epsilon_3\rangle$, where $\alpha \le \beta$ and $\epsilon_i \in \z_p^{\times}$ for all $i$.  Then
$$e_{ij} = \begin{cases}
1 & \mbox{ if $-\epsilon_i\epsilon_j \in (\z_p^{\times})^2$},\\
-1 & \mbox{ otherwise}.
\end{cases}$$

\begin{table} [ht]
\begin{centerline}{\tabcolsep=3pt
\begin{tabular}{|c|c|c||c|c|c|}
\hline \rule[-2mm]{0mm}{8mm}
{}&$\alpha,\beta$&$\frac{\w(L)}{\w(\Lambda_p(L))}$&{}& $\alpha,\beta$&$\frac{\w(L)}{\w(\Lambda_p(L))}$\\
\hline \hline
\rule[-2mm]{0mm}{6mm} {\bf(1)} &$\ \alpha=0, \beta=2$ &$\frac{p(p+e_{12})}2$& {\bf (2)} &$\ \alpha=0, \beta \ge 3$ &$p^2$  \\
\hline
\rule[-2mm]{0mm}{6mm} {\bf (3)} &$\ \alpha=\beta=1$ &$1$ & {\bf (4)} &$\ \alpha=1,\beta=2$ & $\frac{p-e_{13}}2$ \\
\hline
\rule[-2mm]{0mm}{6mm} {\bf (5)}&$\ \alpha=1, \beta \ge 3$ &$p$& {\bf (6)}&$\ \alpha=\beta=2$ &$\frac{p(p+e_{23})}2$  \\
\hline
\rule[-2mm]{0mm}{6mm} {\bf (7)}&$\ \alpha=2, \beta \ge 3$ &$\frac{p(p-e_{12})}2$& {\bf (8)} &$\ \alpha\ge 3$ &$p^2$\\
\hline
\end{tabular}}
\end{centerline}
\vskip 0.2cm
\center{\rm Table I}
\end{table}

\section{Isometry groups} \label{isometry}

We digress in this section to collect some results concerning the isometries of a ternary lattice which are useful for the subsequent discussion.  Throughout this section, $K$ is a primitive ternary lattice.   Given a nonzero vector $x \in K$, the associated symmetry is denoted by $\tau_x$.  We let $S(K)$ be the set of symmetries of $K$.  Since the conjugate of a symmetry of $K$ by any isometry in $O(K)$ is still a symmetry, $S(K)$ is decomposed into finitely many disjoint conjugacy classes under the conjugate action by $O(K)$.  In this section, when we present $S(K)$ explicitly by listing its elements, we will do so by presenting it as the disjoint union of these conjugacy classes.


By a result of Minkowski \cite{Min}, $\vert O(K) \vert$ cannot be larger than 48.    Let $\mathbf I$ be the standard cubic lattice, $\mathbf A$ be the root lattice of Type $A_3$, and $\mathbf J$ be the primitive adjoint of $\bA$; so
$$\bI \cong \langle 1,1,1\rangle, \quad \bA \cong \begin{pmatrix} 2 & 1 & 0\\ 1 & 2 & 1\\ 0 & 1 & 2 \end{pmatrix}  \mbox{ and }
\mathbf J \cong \begin{pmatrix} 3& -1& -1\\ -1 & 3 & -1\\ - 1& -1 & 3 \end{pmatrix}.$$
The isometry groups of all three lattices have order 48--in fact, they are isomorphic--and they are generated by $-I$ and symmetries.  If $\{x_1, x_2, x_3\}$ is the basis which yields any one of the above Gram matrices, then
$$S(\bI) = \{\tau_{x_1}, \tau_{x_2}, \tau_{x_3}\}\cup \{\tau_{x_i \pm x_j} : 1\leq i < j \leq 3\},$$
$$S(\bA) = \{\tau_{x_1}, \tau_{x_2}, \tau_{x_3}, \tau_{x_1 - x_2}, \tau_{x_2-x_3}, \tau_{x_1 - x_2 + x_3}\}\cup \{\tau_{x_1-x_3}, \tau_{x_1+x_3}, \tau_{x_1-2x_2+x_3}\},$$
and
\begin{eqnarray*}
S(\mathbf J) & = & \{\tau_{x_i + x_j} : 1\leq i < j \leq 3\}\cup \\
    & & \{\tau_{x_1 - x_2}, \tau_{x_1 - x_3}, \tau_{x_2 - x_3},  \tau_{x_1 + 2x_2 + x_3}, \tau_{x_1 + x_2 + 2x_3}, \tau_{2x_1 + x_2 + x_3}\}.
\end{eqnarray*}

It is direct to check that $\Lambda_2(\mathbf J) = 2\bI$ and $\Lambda_2(\bI) = \bA$; so $\lambda_2(\mathbf J) = \frac{1}{2}\Lambda_2(\mathbf J)$.  Now, suppose that $L$ is a ternary lattice such that $\Lp(L) = p\mathbf J$.  Let $M = \lambda_2(L)$ and $E = \lambda_2(M)$.  It is not hard to see that $\Lp(M) = p\bI$ and $\Lp(E) = p\bA$, and the $\lambda_2$ transformation induces bijections
\begin{equation}
\Lpinv(p\mathbf J)/\bsim \ \longrightarrow \ \Gamma_p^M(p\bI)/\bsim \ \longrightarrow \ \Gamma_p^E(p\bA)/\bsim. \label{bijection}
\end{equation}
For every $G \in \Gamma_p^E(p\bA)$, define a ternary lattice $G^*$ by setting $G^*_2 = 2G_2^\sharp$, where $\sharp$ denotes the dual, and $G^*_q= G_q$ for all $q \neq 2$.  Then, $E^* = L$, and $*$ induces a bijection from $\Gamma_p^E(p\bA)/\bsim$ back to $\Lpinv(p\mathbf J)/\bsim$ such that $O(G) = O(G^*)$ for all $G \in \Gamma_p^E(p\bA)$.    It then follows from Corollary \ref{con2} that $O(U) = O(\lambda_2(U))$ for any $U$ in either $\Lpinv(p\mathbf J)$ or $\Gamma_p^M(p\bI)$.

Suppose that every element in $O(K)$ has order $\leq 2$.   If $\sigma \neq -I$, then either $\sigma$ or $-\sigma$ is a symmetry.  As a result,  $O(K)$ is an elementary 2-group which is generated by $-I$ and the symmetries.  Let $\tau_u$ and $\tau_v$ be two different symmetries in $O(K)$.  Since $\tau_u$ and $\tau_v$ commutes, $u$ and $v$ must be orthogonal.  This shows that $\vert O(K)\vert$ is at most 8.  Particularly, if $\vert O(K) \vert = 8$, $O(K)$ contains exactly three symmetries $\tau_w$, $\tau_u$, $\tau_v$, and $w, u, v$ are mutually orthogonal in $K$.  In particular, $O(K)$ is isomorphic to the abelian 2-group $\z_2\oplus \z_2\oplus \z_2$.

Now, suppose that $O(K)$ has an isometry $\sigma$ of order 3.   As a $\z[\sigma]$-module, $K$ is isomorphic to either $(\z[\zeta_3], 1)$ or $\z[\zeta_3]\oplus \z$, where $\zeta_3$ is a primitive third root of unity (see \cite{cr}).  Accordingly, $K$ has a basis $\{x_1, x_2, x_3\}$ such that
\begin{equation*}
\sigma(x_1)=x_2, \quad \sigma(x_2)=-x_1-x_2 \quad \text{and} \quad \sigma(x_3)=x_1+x_3
\end{equation*}
or
\begin{equation*}
\sigma(x_1)=x_2, \quad \sigma(x_2)=-x_1-x_2 \quad \text{and} \quad \sigma(x_3)=x_3,
\end{equation*}
and the associated symmetric matrix $(B(x_i, x_j))$ is
$$ K_1(a,b):=\begin{pmatrix} 2a&-a&-a\\-a&2a&0\\-a&0&b\end{pmatrix} \quad \text{or} \quad K_2(a,b):= \begin{pmatrix} 2a&-a&0\\-a&2a&0\\0&0&b\end{pmatrix}$$
for a pair of  relatively prime positive integers $a$ and $b$.  In the three special cases when $K_1(1,1) \cong \bI$, $K_1(1, 2) \cong \bA$, and $K_1(4,3) \cong \mathbf J$, the isometry groups have order 48.

\begin{lem} \label{1224}
Let $a, b$ be relatively prime positive integers.  Then
\begin{enumerate}
\item[(a)] $\vert O(K_2(a,b)) \vert = 24$;

\item[(b)] $\vert O(K_1(a,b))\vert = 12$ unless $(a,b) = (1,1), (1,2)$, or $(4,3)$.
\end{enumerate}
\end{lem}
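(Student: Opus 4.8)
The plan is to realize both $K_1(a,b)$ and $K_2(a,b)$ as lattices carrying an order-$3$ isometry $\sigma$, and to compute the full isometry group by analyzing how $\sigma$ can sit inside $O(K)$ together with the symmetries. Since $K_i(a,b)$ admits $\sigma$ of order $3$ (by construction, via the $\z[\zeta_3]$-module structure), the group $O(K_i(a,b))$ has order divisible by $6$ (it contains $\langle \sigma, -I\rangle \cong \z/3 \times \z/2$), and by Minkowski's bound $|O(K)| \le 48$ it lies in $\{6, 12, 24, 48\}$. The value $48$ is excluded except in the three exceptional cases already identified in the text, because a group of order $48$ acting on a ternary lattice forces that lattice to be isometric (up to scaling) to one of $\bI, \bA, \mathbf J$, and the scaling-normalized Gram matrices $K_1(1,1), K_1(1,2), K_1(4,3)$ exhaust the possibilities of the form $K_1(a,b)$; no $K_2(a,b)$ is in this list since $K_2$ is an orthogonal splitting $\langle b\rangle \perp \begin{pmatrix} 2a & -a \\ -a & 2a\end{pmatrix}$ and one checks directly that none of $\bI,\bA,\mathbf J$ splits off a vector orthogonal to an $A_2$-scaled plane in the required way. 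So it remains to rule out $6$ and to distinguish $12$ from $24$.

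For part (a): $K_2(a,b)$ splits orthogonally as $\langle b \rangle \perp P$ where $P \cong \begin{pmatrix} 2a & -a \\ -a & 2a \end{pmatrix}$ is the scaled hexagonal plane $A_2$ scaled by $a$, whose isometry group is the dihedral group of order $12$ (the symmetry group of the hexagonal lattice). Since $\gcd(a,b)=1$, the vector $x_3$ spanning $\langle b\rangle$ is, up to sign, the unique (primitive) vector of its norm orthogonal to the plane — more precisely, any isometry must permute the minimal vectors, and the splitting $\langle b\rangle \perp P$ is respected because the first Jordan-type constituent $\langle b\rangle$ is the orthogonal complement of the sublattice generated by the (finitely many, explicitly computable) vectors of norm $2a$; an isometry fixes this complement setwise, hence acts on $\langle b \rangle$ by $\pm 1$ and on $P$ by an element of $O(P)$. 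Therefore $O(K_2(a,b)) \cong \{\pm 1\} \times O(P)$, which has order $2 \times 12 = 24$. (The cases where $b = 2a$ or similar coincidences might a priori enlarge the group are irrelevant since $\gcd(a,b)=1$ rules out $b=2a$ unless $a=1,b=2$, and $K_2(1,2) \cong \langle 2 \rangle \perp A_2 \cong \bA$ up to scaling still has $O$ of order $24$ in its $K_2$-normalization — consistent with the claim.)

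For part (b): here I would argue that $O(K_1(a,b))$ contains $\langle \sigma \rangle \rtimes \langle \tau \rangle$ where $\tau$ is a symmetry normalizing $\langle\sigma\rangle$, giving a subgroup $\cong S_3$ of order $6$, and then $\pm I$ doubles this to order $12$; so $|O(K_1(a,b))| \ge 12$. One exhibits the symmetry explicitly: from the basis relations, $\tau_{x_2 - x_3}$ (or an analogous short vector) conjugates $\sigma$ to $\sigma^{-1}$, using that $B(x_2-x_3, \cdot)$ pairs correctly against the $\sigma$-action. To get the upper bound $|O(K_1(a,b))| = 12$ outside the three exceptional cases: one shows $K_1(a,b)$ has \emph{no} isometry of order $4$ and cannot have isometry group of order $24$ or $48$ unless it is one of $\bI, \bA, \mathbf J$ up to scaling. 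The key point is that a ternary lattice with $|O(K)| = 24$ is (up to scaling) $\bA$, and $|O(K)| = 48$ forces $\bI$ (equivalently $\bJ$); so if $K_1(a,b) \not\cong \bI, \bA, \bJ$ up to scaling, then $|O(K_1(a,b))| \in \{6, 12\}$, and since we have already produced a subgroup of order $12$, the answer is exactly $12$. The excluded scalings give precisely $K_1(1,1), K_1(1,2), K_1(4,3)$ by comparing discriminants and Gram matrices: $dK_1(a,b) = a^2(3b - a)$ up to sign, and matching against $d\bI = 1$, $d\bA = 4$, $d\bJ = 16$ (suitably normalized) together with primitivity pins down $(a,b)$.

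The main obstacle I anticipate is the upper-bound half of (b): ruling out order $24$ for $K_1(a,b)$ in the generic case. The clean way is to invoke the classification of ternary lattices with large isometry groups — any primitive ternary lattice $K$ with $|O(K)| \ge 24$ is, after scaling, one of $\bI, \bA, \mathbf J$ — which follows from Minkowski's finiteness together with the fact that such a group must contain an order-$3$ element \emph{and} enough symmetries to force the root-lattice structure; this is essentially classical and I would cite it or derive it from the structure theory already set up earlier in the section (the analysis of $S(K)$ and the $\z[\sigma]$-module decomposition). Once that classification is in hand, the rest is the bookkeeping of discriminants and Gram matrices described above.
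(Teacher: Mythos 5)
Your part (a) is essentially the paper's argument (the paper simply notes that $K_2(a,b)$ is the orthogonal sum of the scaled hexagonal plane $\z x_1+\z x_2$ and $\z x_3$ and calls the rest clear), though your parenthetical consistency check is off: $K_2(1,2)=\langle 2\rangle\perp \bA_2$ has discriminant $6$ and is \emph{not} similar to $\bA$ (discriminant $4$); this does not matter since part (a) has no exceptional cases.

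Part (b), however, has a genuine gap. Your upper bound rests on the claim that any primitive ternary lattice $K$ with $\vert O(K)\vert \geq 24$ is, up to scaling, one of $\bI$, $\bA$, $\mathbf J$. That claim is false, and it is contradicted by part (a) of the very lemma you are proving: every $K_2(a,b)$ (for instance $\langle 5\rangle \perp \bA_2 = K_2(1,5)$, of discriminant $15$) is a primitive ternary lattice with isometry group of order exactly $24$ and is not similar to $\bI$, $\bA$, or $\mathbf J$. So "$\vert O(K_1(a,b))\vert\neq 24$ in the generic case'' cannot be obtained by citing such a classification; what you would actually have to rule out is that $K_1(a,b)$ is isometric to some $K_2(c,d)$ (equivalently, that the rank-one fixed lattice of the order-$3$ isometry splits off orthogonally --- it has index $3$ in $K_1(a,b)$ together with its complement, versus index $1$ in $K_2(c,d)$), and separately that there is no order-$4$ isometry; neither step is carried out. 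The paper avoids all of this with a direct count: writing $dK_1(a,b)=a^2(3b-2a)$ (note your $a^2(3b-a)$ is a slip), it shows that for $b>2a$ the minimal vectors are exactly $\pm x_1,\pm x_2,\pm(x_1+x_2)$ and any isometry fixing them fixes the orthogonal complement $\z(2x_1+x_2+3x_3)$ up to an impossible sign, while for $b<2a$ the vectors of norm $b$ are exactly the six vectors $\pm y_1,\pm y_2,\pm y_3$ of a suitable basis unless $(a,b)=(4,3)$, and the order-$12$ subgroup already acts transitively on them, forcing equality. Some argument of this explicit kind (or a correct invocation of Tahara's classification of finite subgroups of $\mathrm{GL}_3(\z)$ combined with the $K_1$-versus-$K_2$ distinction above) is needed to close the proof.
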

\begin{proof}
Part (a) is clear, since $K_2(a,b)$ is the orthogonal sum of $\z x_1 + \z x_2$ and $\z x_3$.

For part (b), note that $b > 2a/3$ because $K_1(a,b)$ is positive definite.  Let $G$ be the subgroup of $O(K_1(a,b))$ that is generated by $\tau_{x_1}$, $\tau_{x_2}$, $\tau_{x_1+x_2}$, and $-I$.  Our goal is to show that $O(K_1(a,b))$ is equal to this subgroup $G$, which has order 12, unless $(a,b)$ is one of the three exceptional cases.

We first handle the case when $b > 2a$. In this case, $\{x_1, x_2, x_3\}$ is a Minkowski reduced basis, and therefore the minimal vectors in $K_1(a,b)$ are $\pm x_1, \pm x_2$, and $\pm (x_1 + x_2)$.  Let $\sigma$ be an isometry of $K_1(a,b)$.  Since $\sigma$ must permute the minimal vectors, $\sigma$ induces an isometry on the sublattice $\z x_1 + \z x_2$, and hence we may assume that $\sigma(x_i) = x_i$ for $i = 1, 2$.  Since $z: = 2x_1 + x_2 + 3x_3$ spans the orthogonal complement of $\z x_1 + \z x_2$, therefore $\sigma(z) = \pm z$.  A direct computation shows that $\sigma(z) = -z$ is impossible, thus $\sigma(z) = z$ and so $\sigma = I \in G$.

Now, let us assume that $b < 2a$.  The Gram matrix of $K_1(a,b)$ with respect to the new basis $y_1 = x_1 + x_3$, $y_2 = x_1 + x_2 + x_3$, $y_3 = x_3$ is
$$\begin{pmatrix} b & b - a & b-a\\ b-a & b & b-a\\ b-a & b-a & b \end{pmatrix}.$$
Suppose that $u: = \alpha y_1 + \beta y_2 + \gamma y_3$ is a primitive vector of $K_1(a,b)$ with $Q(u) = b$, that is
$$b = b(\alpha^2 + \beta^2 + \gamma^2) + 2(b-a)(\alpha\beta + \beta\gamma + \gamma\alpha).$$
If $\alpha^2 + \beta^2 + \gamma^2 = \vert \alpha\beta + \beta\gamma + \gamma\alpha\vert$, then $\alpha = \beta = \gamma \in \{0, 1, -1\}$.  As a result, $4b = 3a$ and so $(a,b)= (4,3)$ which is a contradiction.  Thus, we may assume that
$$\alpha^2 + \beta^2 + \gamma^2 \geq \vert \alpha\beta + \beta\gamma + \gamma\alpha\vert + 1.$$
Then
\begin{eqnarray*}
b & = & b + b(\alpha^2 + \beta^2 + \gamma^2 -1) + 2(b-a)(\alpha\beta + \beta\gamma + \gamma\alpha)\\
    & = & \begin{cases}
            b + (2a - b)(\alpha^2 + \beta^2 + \gamma^2 - 1) & \mbox{ if $b > a$};\\
            b + (3b - 2a)(\alpha^2 + \beta^2 + \gamma^2 - 1) & \mbox{ if $b < a$}.
          \end{cases}
\end{eqnarray*}
This shows that if $(a,b)$ is not one of the three exceptional pairs, then $\pm y_1, \pm y_2, \pm y_3$ are all the vectors $u \in K_1(a,b)$ such that $Q(u) = b$.  It is direct to check that both $G$ and $O(K_1(a,b))$ act on these six vectors, and $G$ permutes them transitively.  Therefore, for any $\sigma \in O(K_1(a,b))$, there exists $\tau \in G$ such that $\sigma\tau = I$.  Thus $\sigma \in G$ as desired.
\end{proof}

Excluding the three special cases $K_1(1,1), K_1(1,2)$ and $K_1(4,3)$,  $O(K_1(a,b))$ and $O(K_2(a,b))$ are generated by $-I$ and symmetries, and we have
$$S(K_1(a,b))=\{\tau_{x_1}, \tau_{x_2}, \tau_{x_1+x_2}\},$$
and
$$S(K_2(a,b))=\{\tau_{x_1}, \tau_{x_2}, \tau_{x_1+x_2}\}\cup  \{\tau_{x_1+2x_2}, \tau_{2x_1+x_2}, \tau_{x_1-x_2}\}\cup \{  \tau_{x_3} \}.$$
Note that $\tau_{x_3}$ is the only symmetry in the center of $O(K_2(a,b))$.  Also, $O(K_1(a,b)) \cong \z_2\oplus D_3$ and $O(K_2(a,b)) \cong \z_2\oplus \z_2\oplus D_3$.  As a result, both groups do not have any element of order 4.

The following is an immediate consequence of the proof of Lemma \ref{1224}.

\begin{cor}
Let $K$ be a primitive ternary lattice whose isometry group has order $48$.  Then $K \cong \bI, \bA$, or $\mathbf J$.
\end{cor}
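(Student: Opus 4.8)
The plan is to exploit the classification of ternary lattices carrying an isometry of order $3$ that was set up just before Lemma \ref{1224}, together with Lemma \ref{1224} itself; almost all of the work has in fact already been done there. The first step is purely group-theoretic: since $48 = 2^4\cdot 3$, the prime $3$ divides $\vert O(K)\vert$, so by Cauchy's theorem $O(K)$ contains an element $\sigma$ of order $3$. (This simultaneously rules out the degenerate situation, discussed earlier, in which every element of $O(K)$ has order at most $2$ and hence $\vert O(K)\vert \leq 8$; and note that an element of order $6$, if present, would have its square of order $3$, so there is no loss in assuming $\ord(\sigma)=3$ outright.)

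With such a $\sigma$ in hand, I would invoke the $\z[\sigma]$-module description recalled above: $K$ is isomorphic, as a $\z[\sigma]$-module, to $(\z[\zeta_3],1)$ or to $\z[\zeta_3]\oplus\z$, so $K$ has a basis in which its Gram matrix is $K_1(a,b)$ or $K_2(a,b)$ for some relatively prime positive integers $a,b$. Now Lemma \ref{1224}(a) gives $\vert O(K_2(a,b))\vert = 24 \neq 48$, which eliminates the second alternative; hence $K \cong K_1(a,b)$. Finally, Lemma \ref{1224}(b) asserts $\vert O(K_1(a,b))\vert = 12$ unless $(a,b)$ is one of $(1,1),(1,2),(4,3)$, and since $\vert O(K)\vert = 48 \neq 12$ we must be in one of these three exceptional cases. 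As noted in the excerpt, $K_1(1,1)\cong\bI$, $K_1(1,2)\cong\bA$, and $K_1(4,3)\cong\mathbf J$, which is exactly the assertion of the corollary.

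There is no genuinely hard step: once an order-$3$ isometry is produced, the conclusion is a direct reading-off of the two parts of Lemma \ref{1224} against the module-theoretic normal form. The only point that deserves a sentence of care is the very first one—guaranteeing that an element of order $3$ really exists—which is precisely Cauchy's theorem applied to the prime $3$ dividing $48$; everything after that is automatic.
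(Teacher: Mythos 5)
Your proof is correct and follows exactly the route the paper intends: the paper's one-line justification (``an immediate consequence of the proof of Lemma \ref{1224}'') presupposes precisely the chain you spell out — an order-$3$ isometry exists since $3$ divides $48$, the $\z[\sigma]$-module normal form puts $K$ in the family $K_1(a,b)$ or $K_2(a,b)$, and Lemma \ref{1224} then forces one of the three exceptional pairs. Your explicit appeal to Cauchy's theorem is the only step the paper leaves tacit.
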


Now suppose that the order of the isometry $\sigma$ is $4$. Then by \cite[Proposition 4]{t}, there is a basis $\{z_1,z_2,z_3\}$ of $K$ with respect to which $\sigma$ is represented by one of the following four matrices:
$$\begin{pmatrix} 1 & 0 & 0 \\ 0 & 0 & -1\\ 0 & 1 & 0 \end{pmatrix},\ \begin{pmatrix} -1 & 0 & 0 \\ 0 & 0 & -1\\ 0 & 1 & 0 \end{pmatrix},\
\begin{pmatrix} 1 & 0 & 1 \\ 0 & 0 & -1\\ 0 & 1 & 0 \end{pmatrix}, \ \begin{pmatrix} -1 & 0 & -1 \\ 0 & 0 & 1\\ 0 & -1 & 0 \end{pmatrix}.$$
For the first two matrices, it is easy to see that there is a basis $\{x_1, x_2, x_3\}$ of $K$ such that
$$\sigma(x_1)=-x_2, \ \sigma(x_2)=x_1, \ \sigma(x_3)=\pm x_3,$$
which means that $K$ is isometric to
$$K_3(a,b): =  \langle a,a,b\rangle.$$
For the third matrix, let $x_1 = z_1 - 2z_2$, $x_2 = -z_1 + 2z_3$, and $x_3 = z_2 - z_3$.  Then
$$\sigma(x_1)=-x_2, \ \sigma(x_2)=x_1, \ \sigma(x_3)=x_2+x_3,$$
implying that $K$ is isometric to
$$K_4(a,b):= \begin{pmatrix}2a&0&-a\\0&2a&-a\\-a&-a&b\end{pmatrix}.$$
For the fourth, we take $x_1 = z_1$, $x_2 = -z_3$, and $x_3 = z_1 - z_2$ so that
$$\sigma(x_1)=x_2, \ \sigma(x_2)=x_3, \ \sigma(x_3)=-x_1-x_2-x_3;$$
thus $K$ is isometric to
$$\begin{pmatrix} a&b&-a-2b\\b&a&b\\-a-2b&b&a\end{pmatrix} \cong  \begin{pmatrix} 2a+2b&0&-a-b\\0&2a+2b&-a-b\\-a-b&-a-b&a\end{pmatrix}=K_4(a+b,a).$$
Note that $K_3(1,1) \cong \bI$, $K_4(1,2) \cong \bA$, and $K_4(2,3) \cong \mathbf J$.  It is not hard to see that these are the only cases for which $\vert O(K_3(a,b))\vert$ and $\vert O(K_4(a,b))\vert$ are equal to 48.

\begin{lem}
The isometry groups of $K_3(a,b)$ and $K_4(a,b)$ have order $16$, except for $K_3(1,1)$, $K_4(1,2)$, and $K_4(2,3)$.
\end{lem}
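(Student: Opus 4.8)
The plan is to follow the pattern already used in the proof of Lemma \ref{1224}, adapted to the isometries of order $4$. Recall that a finite group $G$ that can act faithfully on a positive definite ternary lattice has $|G| \le 48$, and $|G|$ is divisible by $4$ once $\sigma \in G$ has order $4$; the possibilities are $|G| \in \{4, 8, 12, 16, 24, 48\}$, and among these only $8 \nmid 12$ and $8 \nmid 24$, so if $G$ contains an element of order $4$ then $|G| \in \{4, 8, 16, 48\}$. Since $\sigma \in O(K_i(a,b))$ already generates a cyclic group of order $4$, and $-I \notin \langle \sigma \rangle$ is always present and commutes with everything, we get $|O(K_i(a,b))|$ divisible by $8$; so the only options are $16$ and $48$. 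Having already observed that $|O(K_i(a,b))| = 48$ forces $(a,b)$ into the three exceptional pairs, the whole lemma reduces to the single statement $|O(K_i(a,b))| \ge 16$ for $i = 3, 4$ (equivalently, exhibiting a subgroup of order $16$), together with the routine verification that the three exceptional lattices really are $\bI$, $\bA$, $\mathbf J$ and hence have order $48$.

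For $K_3(a,b) = \langle a, a, b\rangle$ with $\gcd(a,b) = 1$, the lower bound is immediate: the symmetries $\tau_{x_1}$, $\tau_{x_2}$, $\tau_{x_3}$ together with the order-$4$ map $\sigma \colon x_1 \mapsto -x_2, x_2 \mapsto x_1, x_3 \mapsto \pm x_3$ generate a group containing $-I$ of order $16$ (it is the dihedral group of the square acting on the $x_1 x_2$-plane, times the $\{\pm 1\}$ acting on $x_3$). For $K_4(a,b)$ the analogous subgroup $H$ is generated by $\sigma$, $-I$, and the reflection $\tau_{x_1 - x_2}$ (which swaps $x_1, x_2$ and fixes $x_3$, hence lies in $O(K_4(a,b))$ since the Gram matrix is symmetric in $x_1, x_2$): this $H$ again has order $16$. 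So in all cases $16 \mid |O(K_i(a,b))|$, and the lemma follows once we rule out $48$ outside the exceptional pairs — which is exactly the content of the remark preceding the statement that $K_3(1,1) \cong \bI$, $K_4(1,2) \cong \bA$, $K_4(2,3) \cong \mathbf J$ are the only order-$48$ cases.

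The one point that needs genuine work — the analogue of the minimal-vector argument in Lemma \ref{1224} — is showing that the order cannot be strictly between $16$ and $48$, i.e. that the order-$16$ subgroup found above is in fact all of $O(K_i(a,b))$ when $(a,b)$ is non-exceptional. The cleanest route: if $|O(K_i(a,b))| > 16$ then $|O(K_i(a,b))| = 48$, so the lattice is $\bI$, $\bA$, or $\mathbf J$ by the Corollary; comparing discriminants ($dK_3(a,b) = a^2 b$, $dK_4(a,b) = a^2(4b - a)$ up to the normalization, versus $d\bI = 1$, $d\bA = 4$, $d\mathbf J = 16$) and using $\gcd(a,b)=1$ pins down $(a,b)$ to the three listed pairs. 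Thus the only real obstacle is bookkeeping: matching each exceptional $K_i(a,b)$ with the correct one of $\bI, \bA, \mathbf J$ and confirming no other small $(a,b)$ sneaks in; all the rest is forced by the divisibility constraint $|O| \in \{16, 48\}$ and the classification of order-$48$ ternary lattices. I would therefore organize the proof as: (1) cite $|O| \le 48$ and the order-$4$ divisibility to get $|O| \in \{16, 48\}$; (2) produce the explicit order-$16$ subgroup for each of $K_3, K_4$; (3) invoke the order-$48$ classification and a discriminant computation to see that $|O| = 48$ only in the three stated cases; (4) conclude.
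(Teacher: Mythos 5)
Your proposal is correct in substance but reaches the conclusion by a different route than the paper. The paper first counts at least five symmetries to get $\vert O(K)\vert > 8$, invokes Tahara's classification to conclude $\vert O(K)\vert \in \{12,16,24\}$ (the value $48$ having been excluded by the remark preceding the lemma), and then kills $12$ and $24$ by observing that a lattice with isometry group of one of those orders must be of the form $K_1(c,d)$ or $K_2(c,d)$, which have no element of order $4$. You instead exhibit the order-$16$ subgroup $\z_2\oplus D_4$ explicitly (and your generators do generate such a subgroup in both cases), so you only need to exclude larger orders; this bypasses the reliance on the earlier structural analysis of $O(K_1)$ and $O(K_2)$, which is a genuine simplification. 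Three small repairs are needed. First, your opening divisibility argument is a non sequitur: an element of order $4$ forces only $4\mid\vert G\vert$, not $8\mid\vert G\vert$, so by itself it does not eliminate $12$ and $24$; this is harmless because the explicit order-$16$ subgroup supersedes it. Second, $16\mid\vert O(K)\vert$ together with the bound $\vert O(K)\vert\le 48$ still leaves $32$ as a possibility; you need the divisibility form of Minkowski's theorem (every finite subgroup of $\mathrm{GL}_3(\z)$ has order dividing $48$), or equivalently Tahara's list of admissible orders, to discard it. Third, in the deferred bookkeeping the discriminant of $K_4(a,b)$ is $4a^2(b-a)$, not $a^2(4b-a)$, and discriminant matching alone does not pin down $(a,b)$ — for instance $\langle 1,1,4\rangle = K_3(1,4)$ has discriminant $4=d\bA$ but is not isometric to $\bA$ — so one must actually verify the isometries; that verification is exactly what the paper also defers with its ``it is not hard to see'' remark, so on this point the two arguments stand or fall together.
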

\begin{proof}
Suppose that $K$ is either $K_3(a,b)$ or $K_4(a,b)$, but not one of the three exceptional lattices.  Then $O(K)$ contains at least five symmetries (see below); thus $\vert O(K)\vert > 8$.  Then, by \cite{t}, $\vert O(K) \vert = 12, 16$ or 24. Suppose that $\vert O(K) \vert$ is either 12 or 24.  This means that $K$ must be also of the form $K_i(c,d)$, for $i = 1$ or 2.  But then, as indicated earlier, $O(K)$ would not have any element of order 4, which is a contradiction.  Therefore, $\vert O(K) \vert = 16$ as claimed.
\end{proof}

Excluding the special cases $O(K_3(1,1))$, $O(K_4(1,2))$, and $O(K_4(2,3))$, both $O(K_3(a,b))$ and $O(K_4(a,b))$ are also generated by $-I$ and the symmetries, and
$$S(K_3(a,b))=\{\tau_{x_1}, \tau_{x_2}\}\cup \{ \tau_{x_1+x_2}, \tau_{x_1-x_2}\}\cup \{ \tau_{x_3}\},$$
and
$$S(K_4(a,b))=\{\tau_{x_1}, \tau_{x_2}\} \cup \{ \tau_{x_1+x_2}, \tau_{x_1-x_2}\}\cup \{ \tau_{x_1+x_2+2x_3}\}.$$
In either case, the center of the orthogonal group contains one and only one symmetry, namely $\tau_{x_3}$ for $K_3(a,b)$ and $\tau_{x_1 + x_2 + 2x_3}$ for $K_4(a,b)$.  Moreover, both $O(K_3(a,b))$ and $O(K_4(a,b))$ are isomorphic to $\z_2\oplus D_4$.

\begin{defn}
An orthogonal system of a ternary lattice $K$ is a set of three mutually commuting symmetries in $O(K)$.
\end{defn}

If $\{\sigma_1, \sigma_2, \sigma_3\}$ is an orthogonal system and $\sigma_i = \tau_{z_i}$ for all $i$, then $z_1, z_2, z_3$ are mutually orthogonal vectors.

\begin{prop} \label{sym}
Suppose that $\vert O(K) \vert$ is divisible by $8$.   Every symmetry of $K$ belongs to an orthogonal system of $K$.
\end{prop}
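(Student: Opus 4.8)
The plan is to go through the classification of isometry groups $O(K)$ of a primitive ternary lattice $K$ according to $|O(K)|$, and verify the assertion in each case. By Minkowski's result and the preceding discussion, if $8 \mid |O(K)|$ then $|O(K)| \in \{8, 16, 24, 48\}$. For $|O(K)| = 48$ we know by the Corollary that $K$ is one of $\bI$, $\bA$, or $\mathbf J$; and by inspection of the explicit lists $S(\bI)$, $S(\bA)$, $S(\mathbf J)$ each such lattice has at least one orthogonal system (for $\bI$ take $\{\tau_{x_1},\tau_{x_2},\tau_{x_3}\}$), but more to the point, one checks directly in each of these three lattices that every listed symmetry sits in such a triple. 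For $|O(K)| = 16$, the lattice is $K_3(a,b)$ or $K_4(a,b)$, and from the explicit descriptions of $S(K_3(a,b))$ and $S(K_4(a,b))$ together with the fact that the center contains exactly one symmetry ($\tau_{x_3}$, resp.\ $\tau_{x_1+x_2+2x_3}$), every symmetry can be completed: e.g.\ $\{\tau_{x_1},\tau_{x_2},\tau_{x_3}\}$ for $K_3$, and $\{\tau_{x_1+x_2},\tau_{x_1-x_2},\tau_{x_1+x_2+2x_3}\}$ for $K_4$, with the central symmetry orthogonal to everything.

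For $|O(K)| = 24$: by the analysis in Section \ref{isometry}, a ternary lattice with isometry group of order $24$ is of the form $K_2(a,b)$ (with $(a,b)$ not one of the three exceptional pairs). Here $S(K_2(a,b))$ decomposes into three conjugacy classes, and $\tau_{x_3}$ is the unique central symmetry, orthogonal to $x_1$ and $x_2$. Each symmetry $\tau_u$ in the first two conjugacy classes lies inside the sublattice $\z x_1 + \z x_2$ (its defining vector $u$ does), so $\{\tau_u, \tau_{u^\perp}, \tau_{x_3}\}$ is an orthogonal system, where $u^\perp$ denotes a vector in $\z x_1 + \z x_2$ orthogonal to $u$; one checks $\tau_{u^\perp} \in O(K_2(a,b))$ by going through the six vectors $x_1, x_2, x_1 \pm x_2, x_1 + 2x_2, 2x_1 + x_2$ and pairing them up. Finally $\tau_{x_3}$ itself is completed by $\{\tau_{x_1},\tau_{x_2},\tau_{x_3}\}$.

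For $|O(K)| = 8$: by the discussion preceding the Definition, $O(K)$ is elementary abelian $\z_2 \oplus \z_2 \oplus \z_2$, generated by $-I$ and exactly three symmetries $\tau_w,\tau_u,\tau_v$ with $w,u,v$ mutually orthogonal, so $\{\tau_w,\tau_u,\tau_v\}$ is the only orthogonal system and it contains every symmetry of $K$. The main obstacle is really the case-by-case bookkeeping in the $|O(K)| = 24$ and $|O(K)| = 48$ cases: one must be sure that a symmetry lying in one of the "off-axis" conjugacy classes (such as $\tau_{x_1+2x_2}$ in $K_2(a,b)$, or the symmetries $\tau_{x_1-2x_2+x_3}$ in $S(\bA)$) really does admit two further mutually orthogonal symmetries in $O(K)$, which is why the explicit conjugacy-class descriptions compiled earlier in this section are needed rather than a purely group-theoretic argument; the finiteness of the lists makes this a routine but not automatic verification.
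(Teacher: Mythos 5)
Your proposal is correct and follows essentially the same route as the paper, whose proof is the one-line statement that the result ``is done by checking the set of symmetries $S(K)$ for all possible cases''; you simply carry out that case-check explicitly using the lists of $S(K)$ and of orthogonal systems compiled in Section \ref{isometry}. The only remark worth making is that your verification is more detailed than the paper's, and your pairings (e.g.\ $\{\tau_{x_1},\tau_{x_1+2x_2},\tau_{x_3}\}$ for $K_2(a,b)$ and the completion of each $\tau_{x_i\pm x_j}$ in $\bI$) agree with the orthogonal systems the paper records immediately after the proposition.
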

\begin{proof}
This is done by checking the set of symmetries $S(K)$ for all possible cases.
\end{proof}

We can say more about orthogonal systems when $\vert O(K) \vert = 16$ or 24.  In these two cases, there is a unique symmetry $\tau$ that is in the center of $O(K)$.  If $\sigma$ is another symmetry of $K$ which is not $\tau$, then $\sigma$ belongs to one and only one orthogonal system, and this orthogonal system contains another symmetry $\sigma'$, uniquely determined by $\sigma$ of course, and $\tau$. All of these can be proved by examining the set of symmetries $S(K)$ and writing down all the orthogonal systems of $K$.  For $K_2(a,b)$, the orthogonal systems are
$$\{\tau_{x_1}, \tau_{x_1 + 2x_2},\tau_{x_3}\}, \quad \{\tau_{x_2}, \tau_{2x_1 + x_2},  \tau_{x_3}\}, \quad \{\tau_{x_1+x_2}, \tau_{x_1 - x_2}, \tau_{x_3}\}.$$
For $K_3(a,b)$ and $K_4(a,b)$, their orthogonal systems are
$$\{\tau_{x_1}, \tau_{x_2}, \tau_{x_3}\}, \quad \{\tau_{x_1 + x_2}, \tau_{x_1 - x_2}, \tau_{x_3}\}$$
and
$$\{\tau_{x_1}, \tau_{x_2}, \tau_{x_1 + x_2 + 2x_3}\}, \quad \{\tau_{x_1 + x_2}, \tau_{x_1 - x_2}, \tau_{x_1 + x_2 + 2x_3}\}$$
respectively.

\section{Formulae for $\vert \Lpinv(N)_\sigma \vert$}

In this section, we always assume that $L$ is a primitive ternary lattice and $N = \Lp(L)$.  We will obtain information regarding the cardinality of the set of fixed points $\Lpinv(N)_\sigma$ for each nontrivial isometry $\sigma$ of $N$.  In the case when $\sigma$ is a symmetry we will obtain explicit formulae to compute $\vert \Lpinv(N)_\sigma \vert$.

\subsection{Fixed points of isometries}

Let $K$ be a ternary lattice with $\vert O(K) \vert = 12$ or 24; so $K$ is $K_1(a,b)$ or $K_2(a,b)$ according to Section \ref{isometry}.  Let $\sigma$ be an isometry of $K$ of order 3.  By Section \ref{isometry}, $K$ has a basis $\{x_1, x_2, x_3\}$ such that $\sigma(x_1) = x_2$, $\sigma(x_2) = -x_1 - x_2$, and $\sigma(x_3) = x_1 + x_3$ or $x_3$.  The characteristic polynomial of $\sigma$ is always $x^3 - 1$, which implies that the fixed points of $\sigma$ in $K$, denoted by $K_\sigma$, is a rank 1 sublattice.  Indeed, a straightforward calculation shows that $K_\sigma = \z w$, where
\begin{equation} \label{w3}
w = \begin{cases}
2x_1 + x_2 + 3x_3 & \mbox{ if $K = K_1(a,b)$},\\
x_3 & \mbox{ if $K = K_2(a,b)$},
\end{cases}
\end{equation}
and $Q(w) = 3(3b - 2a)$ or $b$ accordingly.

The primitive sublattice $\z x_1 + \z x_2 \cong \left (\begin{smallmatrix} 2a & -a\\ -a & 2a \end{smallmatrix}\right)$ of $K$ is orthogonal to $K_\sigma$, and so it is in fact the orthogonal complement of $K_\sigma$ in $K$.  Thus, $[K : \z w \perp (\z x_1 + \z x_2)] = 3$ if $\vert O(K) \vert = 12$, whereas $K = \z w \perp (\z x_1 + \z x_2)$ if $\vert O(K) \vert = 24$.

\begin{prop} \label{order3}
Suppose that $\vert O(N) \vert$ is divisible by $3$, and that $p \neq 3$ if $\vert O(N) \vert = 24$.
\begin{enumerate}
\item[(a)]  If $\vert O(N) \vert < 48$, then there is at most one lattice $M \in \Lpinv(N)$ such that $\vert O(M) \vert$ is divisible by $3$.  Moreover, $O(M) = O(N)$ in this case.

\item[(b)] If $\vert O(N) \vert = 48$, then there is at most one class of lattices $M$ in $\Lpinv(N)$ such that $\vert O(M)\vert$ is divisible by $3$.  If, in addition,  $N = p\bI$, then every one of these lattices is isometric to $K_1(1, \frac{p^2 + 2}{3})$ or $K_1(p^2, \frac{2p^2 + 1}{3})$, when $dM$ is $p^2$ or $p^4$ accordingly.
\end{enumerate}
\end{prop}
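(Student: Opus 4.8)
The plan is to reduce both parts to the single local assertion that, for a fixed isometry $\sigma$ of $N$ of order $3$, the fixed‑point set $\Lpinv(N)_\sigma$ contains at most one lattice. First the group‑theoretic reduction. Since $3\mid\vert O(N)\vert$, the lattice $N$ carries an order‑$3$ isometry, so by Section \ref{isometry} it is (a scaling of) one of $K_1(a,b)$, $K_2(a,b)$, $\bI$, $\bA$, $\mathbf{J}$. If $M\in\Lpinv(N)$ has $3\mid\vert O(M)\vert$, then by Corollary \ref{con2} $O(M)$ is isomorphic to a subgroup of $O(N)$ of order divisible by $3$. When $\vert O(N)\vert<48$ we have $O(N)\cong\z_2\oplus D_3$ or $\z_2\oplus\z_2\oplus D_3$, each with a unique (hence normal) subgroup $P$ of order $3$, so $P\subseteq O(M)$, i.e. $\sigma(M)=M$ for a generator $\sigma$ of $P$, giving $M\in\Lpinv(N)_\sigma$. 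When $\vert O(N)\vert=48$, all order‑$3$ subgroups of $O(N)$ are conjugate, so $O(M)\supseteq g\langle\sigma\rangle g^{-1}$ for a fixed order‑$3$ isometry $\sigma$ and some $g\in O(N)$, whence $g^{-1}(M)\in\Lpinv(N)_\sigma$.

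To prove the local assertion I would argue as follows. A $\z$‑lattice on the fixed quadratic space $V$ is recovered from its localizations, and for $M\in\Lpinv(N)$ one has $M_q=N_q$ at every $q\ne p$ (over $\z_q$ the congruence defining $\Lambda_p$ is vacuous); so $M$ is determined by $M_p$, which must satisfy $\Lambda_p(M_p)=N_p$, $M_p\cong L_p$, and, since $\sigma(M)=M$, $\sigma(M_p)=M_p$. Write $V=V_\sigma\perp W$ with $\dim V_\sigma=1$ and $\sigma|_W$ of minimal polynomial $x^2+x+1$. For $p\ne 3$ one has $\z_p[\sigma]\cong\z_p\times\z_p[\zeta_3]$, so $M_p$ splits $\sigma$‑isotypically as $A\perp C$ with $A\subseteq(V_\sigma)_p$ of rank $1$ and $C\subseteq W_p$ of rank $2$; then $N_p=\Lambda_p(A)\perp\Lambda_p(C)$, with $\Lambda_p(A)=N_p\cap(V_\sigma)_p$ and $\Lambda_p(C)=N_p\cap W_p$ two \emph{fixed} lattices. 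Both $A$ and $C$ are modular $\z_p$‑lattices (for $C$ this is forced by $\sigma$‑invariance, which makes $C$ a scaling of a lattice with Gram matrix $\left(\begin{smallmatrix}2&-1\\-1&2\end{smallmatrix}\right)$, unimodular at $p$), and $\Lambda_p$ is injective on lattices of a given modular level by Lemma \ref{lambda}; hence $A$ and $C$ are each recovered from the fixed lattices $N_p\cap(V_\sigma)_p$ and $N_p\cap W_p$ once their modular levels are known, and the finitely many a priori choices of these levels are pinned to exactly one by $A\perp C\cong L_p$ — a case check organized by the Jordan type of $L_p$ as in Table I. So $M_p$, hence $M$, is unique. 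The case $p=3$, where $\sigma$ fails to act semisimply over $\z_3$, must be handled separately, and this is exactly why $p\ne 3$ is imposed when $\vert O(N)\vert=24$.

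Granting the local assertion, part (a) follows at once: there is at most one $M\in\Lpinv(N)$ with $3\mid\vert O(M)\vert$, and if $M_0$ denotes it then, $\langle\sigma\rangle$ being normal in $O(N)$, every $\tau\in O(N)$ sends the $\sigma$‑invariant lattice $M_0\in\Lpinv(N)$ to another such lattice, forcing $\tau(M_0)=M_0$; thus $O(N)\subseteq O(M_0)$, and with Corollary \ref{con2}, $O(M_0)=O(N)$. For part (b), any two lattices $M,M'\in\Lpinv(N)$ with $3\mid\vert O(M)\vert,\vert O(M')\vert$ satisfy $g^{-1}(M)=M_0=h^{-1}(M')$ for suitable $g,h\in O(N)$, so $M'=hg^{-1}(M)$ and, since $O(N)$ permutes $\Lpinv(N)$, $M$ and $M'$ are isometric; hence they lie in one class. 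If moreover $N=p\bI$, then such an $M$ has an order‑$3$ isometry and so is $K_1(a,b)$ or $K_2(a,b)$; since $(p\bI)_q$ is unimodular at every $q\ne p$, so is $M_q$, whence $dM$ is a power of $p$, excluding $K_2(a,b)$ because $dK_2(a,b)=3a^2b$. Using $dK_1(a,b)=a^2(3b-2a)$ and the diagonalization $K_1(a,b)\cong\langle 2a,\tfrac32 a,\,b-\tfrac23 a\rangle$ over $\q$, the constraint $\Lambda_p(M)=p\bI$ leaves only $dM=p^2$ or $dM=p^4$, and solving $a^2(3b-2a)=p^2$, resp.\ $p^4$, with $\gcd(a,b)=1$ under that constraint gives $(a,b)=(1,\tfrac{p^2+2}{3})$, resp.\ $(p^2,\tfrac{2p^2+1}{3})$.

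The hard part is the local assertion $\vert\Lpinv(N)_\sigma\vert\le 1$. The operator $\Lambda_p$ is far from injective — by Table I its fibres have size $p$, $p^2$, or $\tfrac{p(p\pm 1)}{2}$ — so the real content is that $\sigma$‑invariance collapses each fibre to a single lattice; moreover the $\z_p[\sigma]$‑splitting that makes this transparent degenerates at $p=3$, where a separate and more delicate analysis is needed.
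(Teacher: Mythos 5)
Your route is genuinely different from the paper's. Where the paper classifies every lattice with an order-$3$ isometry as some $K_1(a,b)$ or $K_2(a,b)$ and then compares discriminants and the fixed sublattices $\Lp(M)_\sigma = N_\sigma$ of two hypothetical preimages, you localize: the $\z_p[\sigma]$-isotypic splitting $M_p = A \perp C$, the modularity of $A$ and $C$, and the fact that $\Lambda_p$ only rescales a modular lattice reduce everything to finitely many candidates for $M_p$. This is a clean idea, and your ``case check'' can in fact be closed uniformly: the four candidates $(A,C)$, with $A \in \{p^{-1}(N_p\cap (V_\sigma)_p),\, N_p\cap(V_\sigma)_p\}$ and likewise for $C$, have $\ord_p(dM_p)$ equal to four \emph{distinct} integers, so at most one can be isometric to $L_p$. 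Granting the local assertion, your deductions of (a), of $O(M)=O(N)$ via normality of the $3$-Sylow, and of the single-class statement in (b) via conjugacy of $3$-Sylows are all correct.

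The genuine gap is $p=3$. The hypothesis only excludes $p=3$ when $\vert O(N)\vert = 24$; the proposition still asserts its conclusions for $p=3$ when $\vert O(N)\vert = 12$ and when $\vert O(N)\vert = 48$, and your entire local argument (the idempotent $\tfrac{1}{3}(1+\sigma+\sigma^2)$, hence the splitting $M_p = A\perp C$) is unavailable there --- you say this case ``must be handled separately'' but never handle it. Concretely, for $\vert O(N)\vert=12$ and $p=3$ the paper instead uses the explicit fixed vector $w=2x_1+x_2+3x_3$ with $Q(w)=3(3b-2a)$, notes that $3\mid a$ forces $M_\sigma\subseteq\Lambda_3(M)$, and compares $N_\sigma$ for two candidates; some substitute for this is required. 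The same gap resurfaces in your part (b): excluding $M\cong K_2(a,b)$ ``because $dK_2(a,b)=3a^2b$'' fails when $p=3$, where $3a^2b$ can be a power of $3$ (the candidates $K_2(1,3)$ and $K_2(1,27)$ survive the discriminant test and must be killed by showing $\langle 2,3\rangle$, resp.\ $\langle 2,27\rangle$, is not represented by $M_3$). A smaller loose end: the equations $a^2(3b-2a)=p^2$ and $p^4$ have extra coprime solutions (e.g.\ $(a,b)=(p,\tfrac{2p+1}{3})$ and $(1,\tfrac{p^4+2}{3})$), which are only eliminated by checking that their $p$-adic Jordan splittings are incompatible with $\Lambda_p(M)=p\bI$; ``under that constraint'' needs to be made explicit.
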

\begin{proof}
We first handle the case when $\vert O(N) \vert$ is either 12 or 24.  Suppose that $M \in \Lpinv(N)$ has an isometry $\sigma$ of order 3.  If $\vert O(N) \vert = 12$, then of course $O(M) = O(N)$.  Let us assume that $\vert O(N) \vert = 24$ but $\vert O(M) \vert = 12$.    Since $p \neq 3$ under this assumption, we have $N_3 = M_3$ and hence $N_\sigma$ is not an orthogonal summand of $N$, a contradiction.  Thus $O(M) = O(N)$ whenever 3 divides $\vert O(M) \vert$.

Let $M$ and $M'$ be lattices in $\Lpinv(N)$ such that $O(M) = O(M') = O(N)$.  Let $\sigma$ be an isometry of order 3 in $O(N)$.  Suppose that $\vert O(N) \vert = 24$.  Then $M = K_2(a,b)$ and $M' = K_2(c,d)$ for some integers $a, b, c, d$.  Note that, since $p \neq 3$ when $\vert O(N) \vert = 24$,  $p\mid a$ if and only if $p\mid c$ by considering the local structures of $M_p$ and $M'_p$.  Therefore, if $p$ divides $a$, then
$$\langle p^2d \rangle \cong \Lp(M')_\sigma = N_\sigma = \Lp(M)_\sigma \cong \langle p^2 b \rangle.$$
Thus, $b = d$ and, since $dM = dM'$, we have $a = c$ as well.  This shows that $M$ is isometric to $M'$.  But every isometry from $M$ to $M'$ must lie in $O(N)$; therefore $M = M'$.  The argument for the case $p \nmid a$ is similar, except that we have $\Lp(M)_\sigma \cong \langle b \rangle$ in that case.

If $\vert O(N) \vert = 12$, then $M= K_1(e, f)$ and $M' = K_1(g, h)$ for some integers $e, f, g, h$.  Since $dM = dM'$, therefore $e^2(3f - 2e) = g^2(3h - 2g)$.  When $p \neq 3$, we may argue as before to show that $M = M'$.  When $p = 3$, $3\mid e$ (and hence $3\mid g$ as well) since $\ord_3(dM) \geq 2$.  Then we may use (\ref{w3}) to conclude that $M_\sigma$ is in $\Lp(M)$.  Therefore,
$$\langle 3(3e - 2f) \rangle \cong \Lp(M)_\sigma = N_\sigma = \Lp(M')_\sigma \cong \langle 3(3h - 2g) \rangle,$$
which implies $e = g$ and $f = h$.  As is argued before, we have $M = M'$ as consequence.

We now assume that $\vert O(N) \vert = 48$.  By the bijections in (\ref{bijection}), it suffices to deal with the case when $N = p\bI$.  So, suppose that $\Lp(M) = p\bI$ and that $M$ has an isometry of order 3.  Thus $M$ itself cannot be similar to $\bI$ or $\bA$.  Therefore, $M$ is either $K_1(a,b)$ or $K_2(c,d)$ for some suitable integers $a, b, c, d$.  Note that $M_p$ is in Case {\bf (1)} or Case {\bf (6)} described in Table I.  Suppose that $M = K_2(c,d)$.  Since $dM = 3c^2d$ in this case and $\gcd(c,d) = 1$, we have $p = 3$ and $M$ is $K_2(1,27)$ or $K_2(1, 3)$.  Both possibilities lead to a contradiction since neither $\langle 2, 27 \rangle$ nor $\langle 2, 3 \rangle$ is represented by $M_3$.  If $M = K_1(a,b)$, then a similar analysis on discriminant and local representations--over $\z_p$ this time--shows that $M$ is either $K_1(1, \frac{p^2 + 2}{3})$ or $K_1(p^2, \frac{2p^2 + 1}{3})$, depending on whether $dL$ is $p^2$ or $p^4$.
\end{proof}

Let $G$ be a ternary lattice with an isometry $\sigma$ of order 4.   By replacing $\sigma$ by $-\sigma$ if necessary, we can always assume that the characteristic polynomial of $\sigma$ is $(x^2 + 1)(x - 1)$ and $G_\sigma$ is a rank 1 sublattice.  Suppose that $\vert O(N) \vert = 16$.  Then $G = K_3(a,b)$ or $K_4(a,b)$ for some integers $a,b$, and $G$ has a basis $\{x_1, x_2, x_3\}$ such that $\sigma(x_1) = -x_2$, $\sigma(x_2) = x_1$, and $\sigma(x_3) = x_3$ or $x_2 + x_3$.  So,
$$G_\sigma = \begin{cases}
\z x_3 \cong \langle b \rangle & \mbox{ if $G \cong K_3(a,b)$},\\
\z(x_1+ x_2 + 2x_3) \cong \langle 4(b-a) \rangle & \mbox{ if $G \cong K_4(a,b)$}.
\end{cases}$$
In any case, $\z x_1 + \z x_2 \cong \langle 2a, 2a \rangle$ is the orthogonal complement of $G_\sigma$.  As a result, $G = G_\sigma \perp (\z x_1 + \z x_2)$ if $G = K_3(a,b)$, whereas $[G : G_\sigma \perp (\z x_1 + \z x_2)] = 2$ if $G = K_4(a,b)$.

\begin{prop} \label{order4}
Suppose that $N$ has an isometry of order $4$.
\begin{enumerate}
\item[(a)] If $\vert O(N) \vert < 48$, then there is at most one lattice $M \in \Lpinv(N)$ such that $M$ has an isometry of order $4$.  Furthermore, if $\vert O(N) \vert = 16$, then $M = K_3(a,b)$ for some $a, b \in \z$ if and only if $N = K_3(c,d)$ for some $c, d \in \z$.

\item[(b)] If $\vert O(N) \vert = 48$, then there is at most one class of lattices $M \in \Lpinv(N)$ such that each of these $M$ has an isometry of order $4$.  If, in addition, $N  = p\bI$, then all these lattices $M$ are isometric to $K_3(1,p^2)$ if $dM = p^2$, or $K_3(p^2,1)$ if $dM = p^4$.
\end{enumerate}
\end{prop}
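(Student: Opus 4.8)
The plan is to run, \emph{mutatis mutandis}, the argument behind Proposition \ref{order3}, replacing the order-$3$ isometry by an order-$4$ one and feeding in the structure of $K_3(a,b)$ and $K_4(a,b)$ recorded in Section \ref{isometry} and in the paragraph just before the proposition. I would first pin down the groups. Since $N$ has an isometry of order $4$, the classification following \cite[Proposition 4]{t} forces $N$ to be similar to one of $\bI,\bA,\mathbf J,K_3(c,d),K_4(c,d)$, so $\vert O(N)\vert\in\{16,48\}$; in part (a) the hypothesis $\vert O(N)\vert<48$ then gives $\vert O(N)\vert=16$. If $M\in\Lpinv(N)$ has an isometry of order $4$, the same classification applies to $M$; but $\ord_p(dM)=\ord_p(dL)\ge2$ while $d\bI,d\bA,d\mathbf J$ are prime to $p$, so $M$ is of type $K_3$ or $K_4$ and $\vert O(M)\vert=16$. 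By Corollary \ref{con2}, $O(M)\subseteq O(N)$, hence $O(M)=O(N)$ in part (a) and $O(M)$ has index $3$ in $O(N)$ in part (b). In part (b) the bijections of (\ref{bijection}) preserve the order of the isometry group (as noted right after (\ref{bijection})), so I may assume $N=p\bI$.

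For part (a) I would fix an order-$4$ isometry $\sigma\in O(N)$ and, after replacing it by $-\sigma$ if needed, arrange its characteristic polynomial to be $(x^2+1)(x-1)$, so that for every $G$ with $\sigma\in O(G)$ the fixed lattice $G_\sigma$ has rank $1$ and its orthogonal complement $G_\sigma^\perp$ has rank $2$, exactly as in the paragraph preceding the proposition. The decisive point is that the glue index $[G:G_\sigma\perp G_\sigma^\perp]$ is determined by the $2$-adic completion $G_2$: it equals $1$ for $G$ of type $K_3$ and $2$ for $G$ of type $K_4$, hence lies in $\{1,2\}$, and since it is the product over primes $q$ of the local indices (each a power of $q$) all odd-$q$ factors are trivial and the index is read off from $G_2$. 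As $\Lp$ with $p$ odd does not change $G_2$, this index agrees for $M$ and $N=\Lp(M)$; therefore $N$ is of type $K_3$ precisely when $M$ is, which settles the ``furthermore'' clause and forces any two candidates $M,M'\in\Lpinv(N)$ to have the common type of $N$. It then remains to show that the type determines the candidate uniquely. For $M$ of type $K_3$ one has $M=M_\sigma\perp M_\sigma^\perp$ with $M_\sigma^\perp$ of the shape $\langle 2a,2a\rangle$; since $M_p\cong L_p$ is genus-fixed, a short computation with $\Lp$ gives $N_\sigma\cong M_\sigma$ or $N_\sigma\cong pM_\sigma$ according to whether $\norm(M_\sigma)\subseteq p\z$, and together with the value of $dM$ this recovers $M_\sigma$ and $M_\sigma^\perp$, hence $M$, up to isometry. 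Since by Lemma \ref{conf} any isometry between two such lattices lies in $O(N)$, $M$ is determined as a sublattice of the ambient space, giving ``at most one''. The type-$K_4$ case is identical, now with $[M:M_\sigma\perp M_\sigma^\perp]=2$ and $M_\sigma$ of the shape $\langle 4(b-a)\rangle$.

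For part (b), after the reduction to $N=p\bI$, a candidate $M\in\Lpinv(N)$ with an isometry of order $4$ is of type $K_3$ or $K_4$, satisfies $M_q\cong\bI_q$ for all $q\neq p$, and has $M_p$ in Case {\bf(1)} (when $dM=p^2$) or Case {\bf(6)} (when $dM=p^4$) of Table I, the same dichotomy as in the proof of Proposition \ref{order3}(b). Since $dK_4(a,b)=4a^2(b-a)$ is always even while $dM$ is an odd prime power, $M$ cannot be of type $K_4$; and among the primitive lattices $K_3(a,b)=\langle a,a,b\rangle$ with $a^2b\in\{p^2,p^4\}$ a direct check of $\Lp(K_3(a,b))_p$ against $(p\bI)_p$ leaves only $(a,b)=(1,p^2)$ and $(p^2,1)$. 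As $dM$ is constant on $\gen(L)$, this yields a single isometry class in $\Lpinv(N)$ that can carry an order-$4$ isometry, namely $K_3(1,p^2)$ if $dM=p^2$ and $K_3(p^2,1)$ if $dM=p^4$, which is the assertion.

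The step I expect to be the main obstacle is the glue-index claim in part (a). In Proposition \ref{order3} the two model lattices $K_1$ and $K_2$ are already separated by their orders $12$ and $24$, whereas $K_3$ and $K_4$ both have order $16$, so one must produce an isometry invariant that both distinguishes them and is stable under $\Lp$; the index $[G:G_\sigma\perp G_\sigma^\perp]$ does both, being essentially a $2$-adic quantity and hence untouched by Watson transformations at odd primes. Once that is nailed down, everything else is a line-by-line transcription of the order-$3$ argument.
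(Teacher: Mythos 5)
Your proposal is correct and follows essentially the same route as the paper: the ``glue index'' $[G:G_\sigma\perp G_\sigma^\perp]\in\{1,2\}$ is exactly the paper's criterion of whether $G_\sigma$ is an orthogonal summand, which the paper likewise tests $2$-adically via $(N_\sigma)_2=(M_\sigma)_2$ and $N_2=M_2$ to show the type ($K_3$ versus $K_4$) is preserved under $\Lambda_p$. Your uniqueness argument in part (a) is the same transcription of the Proposition \ref{order3} argument that the paper leaves to the reader, and your part (b) (reduction to $p\bI$, elimination of $K_4$ by discriminant parity, then the discriminant/local check) matches the paper's proof.
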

\begin{proof}
Suppose that $\vert O(N) \vert = 16$, and that $M$ is a lattice in $\Lpinv(N)$ with an isometry $\sigma$ of order 4 such that $M_\sigma \neq 0$.  If $M = K_3(a,b)$, then of course $N = K_3(c,d)$ for some $c, d \in \z$.  If $M = K_4(a,b)$, then $M_2 = \Lp(M)_2 = N_2$ and $(N_\sigma)_2 = (N_2)_\sigma = (M_2)_\sigma = (M_\sigma)_2$. So, $N_\sigma$ is not an orthogonal summand of $N$, which means that $N$ is $K_4(c,d)$ for some $c, d \in \z$.  The rest of the proof is the same as the one for Proposition \ref{order3}, and we leave it for the readers.

Now, suppose that $\vert O(N) \vert = 48$.  As in Proposition \ref{order3}, we may assume that $N = p\bI$.  Suppose that $M \in \Lpinv(N)$ is a lattice which has an isometry $\sigma$ of order 4.  As a result, $M$ is either $K_3(a,b)$ or $K_4(a,b)$.  We can rule out $K_4(a,b)$ by discriminant consideration.  So, $M$ must be either $K_3(1, p^2) \cong \langle 1, 1, p^2 \rangle$ when $dM = p^2$ or $K_3(p^2,1) = \langle p^2, p^2, 1\rangle$ when $dM = p^4$.
\end{proof}

The following corollary is a direct consequence of Proposition \ref{order3} and Proposition \ref{order4}.

\begin{cor} \label{34fin}
Suppose that $p \neq 3$ when $\vert O(N) \vert = 24$.  If $\vert O(N) \vert < 48$, then  $\vert \Lpinv(N)_{\sigma}\vert \le 1$ for any $\sigma \in O(N)$ of order at least $3$.
\end{cor}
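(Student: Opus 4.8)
The plan is to derive the corollary directly from Proposition \ref{order3}(a) and Proposition \ref{order4}(a), after reducing an arbitrary isometry of order at least $3$ to a power of it whose order is exactly $3$ or exactly $4$.

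First I would fix $\sigma \in O(N)$ with $\ord(\sigma) \ge 3$ and recall that $\Lpinv(N)_\sigma$ consists of those $M \in \Lpinv(N)$ with $\sigma(M) = M$; for such an $M$ the restriction $\sigma|_M$ lies in $O(M)$, and since $M$ spans $V$ it has the same order as $\sigma$. The next observation is that $\Lpinv(N)_\sigma \subseteq \Lpinv(N)_{\sigma^k}$ for every integer $k$, so it suffices to bound $\vert \Lpinv(N)_\rho\vert$ for a well-chosen power $\rho$ of $\sigma$. By Minkowski's bound $\vert O(N)\vert \le 48$, and since $\vert O(N)\vert < 48$ it lies in $\{1,2,4,8,12,16,24\}$; in particular the only primes dividing $\vert O(N)\vert$ are $2$ and $3$, so $\ord(\sigma) = 2^a3^b$. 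If $b \ge 1$, take $\rho = \sigma^{2^a 3^{b-1}}$, which has order $3$; if $b = 0$, then $a \ge 2$ and $\rho = \sigma^{2^{a-2}}$ has order $4$.

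It then remains to handle the two cases. If $\ord(\rho) = 3$, then $3 \mid \vert O(N)\vert$, and the hypothesis of the corollary guarantees $p \ne 3$ whenever $\vert O(N)\vert = 24$, so Proposition \ref{order3}(a) applies: any $M \in \Lpinv(N)_\rho$ has $3 \mid \vert O(M)\vert$ because $\rho|_M \in O(M)$ has order $3$, and the proposition says at most one such $M$ exists. If $\ord(\rho) = 4$, then $N$ itself carries an isometry of order $4$, so Proposition \ref{order4}(a) applies: any $M \in \Lpinv(N)_\rho$ has the order-$4$ isometry $\rho|_M$, and again the proposition yields at most one such $M$. In both cases $\vert \Lpinv(N)_\rho\vert \le 1$, hence $\vert \Lpinv(N)_\sigma\vert \le 1$.

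I do not expect a genuine obstacle here; as the text already notes, the statement is a formal consequence of the two propositions. The only mild point is the reduction to order $3$ or $4$, which is harmless precisely because the Minkowski bound together with $\vert O(N)\vert < 48$ forces every element of $O(N)$ of order $\ge 3$ to have order $3$, $4$, or $6$, and an order-$6$ element is replaced by its square.
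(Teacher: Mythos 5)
Your proof is correct and is exactly the argument the paper intends: the paper presents this corollary as a direct consequence of Propositions \ref{order3}(a) and \ref{order4}(a), and your reduction of an arbitrary isometry of order at least $3$ to a power of exact order $3$ or $4$ (using $\Lpinv(N)_\sigma \subseteq \Lpinv(N)_{\sigma^k}$ and the fact that $\vert O(N)\vert$ has only $2$ and $3$ as prime divisors) is the natural way to fill in the omitted details.
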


\begin{rmk}
When $\vert O(N) \vert = 48$, Propositions \ref{order3} and \ref{order4} show that
\begin{equation}\label{48}
h_{12}(N) = (1 - \delta_{3p})\left (\frac{1 + \left(\frac{3d_0}{p} \right)}{2}\right) \quad h_{16}(N) = \frac{1 + \left(\frac{d_0}{p} \right)}{2},
\end{equation}
where $\delta_{ij}$ is the Kronecker's delta, $d_0$ is the discriminant of a unimodular Jordan component of $L_p$, and $\left(\frac{}{p}\right)$ is the Legendre symbol.
\end{rmk}

\subsection{Special symmetries}

\begin{lem} \label{trivial}
Let $M$ be an $R$-lattice, where $R$ is either $\z$ or $\z_p$.  Suppose that $\tau_w$ is a symmetry in $O(M)$ with $w$ a primitive vector in $M$. If $Q(w)$ is odd or $R=\z_p$, then $Rw$ is an orthogonal summand of $M$.
\end{lem}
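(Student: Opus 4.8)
The plan is to read off a divisibility statement from the explicit formula for the symmetry and then exhibit the orthogonal splitting by hand. Recall that $\tau_w(x) = x - \frac{2B(x,w)}{Q(w)}\,w$; in particular $Q(w) \neq 0$, since $\tau_w$ being a symmetry presupposes this. The first step is to observe that for every $x \in M$ the vector $x - \tau_w(x) = \frac{2B(x,w)}{Q(w)}\,w$ lies in $M$. This is the point at which primitivity of $w$ is used: if $cw \in M$ for some $c$ in the quotient field of $R$ and $w$ is primitive, then $c \in R$ (pass to $M/Rw$, which is torsion-free over the principal ideal domain $R$). Applying this with $c = \frac{2B(x,w)}{Q(w)}$ gives $2B(x,w) \in Q(w)R$ for every $x \in M$.

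The second step is to cancel the factor $2$. If $R = \z_p$ this is immediate, because $p$ is odd throughout the paper and hence $2 \in R^{\times}$; if $R = \z$ and $Q(w)$ is odd, then $\gcd(2,Q(w)) = 1$. In either situation $2B(x,w) \in Q(w)R$ forces $B(x,w) \in Q(w)R$, so $B(M,w) \subseteq Q(w)R$; and since $B(w,w) = Q(w) \in B(M,w)$ this is in fact an equality $B(M,w) = Q(w)R$.

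The last step is the splitting itself. Given $x \in M$, put $c = B(x,w)/Q(w)$, which lies in $R$ by the previous step; then $x - cw \in M$ and $B(x - cw, w) = 0$, so $x$ lies in $Rw + (M \cap w^{\perp})$. Since $Q(w) \neq 0$ we have $Rw \cap w^{\perp} = 0$, so the sum $M = Rw \perp (M \cap w^{\perp})$ is direct and orthogonal, which is exactly the assertion that $Rw$ is an orthogonal summand of $M$.

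I do not expect a real obstacle here: the only point requiring care is the use of primitivity in the first step, which is precisely what converts ``$\tau_w$ stabilizes $M$'' into the integral condition $2B(M,w) \subseteq Q(w)R$ (without it the conclusion can genuinely fail, as one sees already by replacing $w$ with a non-unit multiple of a primitive vector). The cancellation of the $2$ under the two stated hypotheses is elementary, and the splitting is then purely formal.
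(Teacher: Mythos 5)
Your proof is correct and follows essentially the same route as the paper's: both use the primitivity of $w$ together with $\tau_w(M)\subseteq M$ to show the Gram--Schmidt coefficient $B(x,w)/Q(w)$ lies in $R$, and then split off $Rw$. The only difference is cosmetic (you argue basis-free via $M\cap w^{\perp}$, the paper adjusts a basis), and you make explicit the cancellation of the factor $2$ --- i.e.\ exactly where the hypothesis ``$Q(w)$ odd or $R=\z_p$ with $p$ odd'' is used --- which the paper leaves implicit.
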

\begin{proof}
Since $w$ is primitive in $M$, there exist vectors $x_2, \ldots, x_n$ in $M$ such that $M=R w+R x_2 +\cdots+R x_n$.  Since $\tau_w \in O(M)$,  $\tau_w(x_i) \in M$ for each $i = 2, \ldots, n$, and it follows that there exists $a_i \in R$ such that $B(w,a_iw+x_i)=0$.   Then $M = Rw \perp M'$, where $M'=R(a_1w+x_2)+\cdots+R(a_1w+x_n)$.
\end{proof}

Let $M$ be a ternary lattice.  Every symmetry $\sigma$ in $O(M)$ is of the form $\tau_x$, where $x$ is a primitive vector of $M$.  We define $Q_M(\sigma)$ to be $Q(x)$.  Note that $Q_M(\sigma)$ is well-defined.  The next technical definition, which depends on the prime $p$ we fix at the outset, is tailored for later discussion.

\begin{defn} \label{length}
Let $\sigma$ be a symmetry of a ternary lattice $M$.  We call $\sigma$ special to $M$ if
$$Q_M(\sigma) \begin{cases}
\not \equiv 0 \mod p& \mbox{ if the unimodular component of $M_p$ has rank 1};\\
\equiv 0 \mod p & \mbox{ otherwise}.
\end{cases}$$
\end{defn}

Note that in the definition if $\sigma = \tau_x$ is special to $M$ with $x$ primitive in $M$, then $\z_px$ must be either the leading or the last component of a Jordan decomposition of $M_p$.

\begin{prop}\label{specialone}
Suppose that $\Lp(L) = N$ and $\sigma$ is a symmetry of $N$.  Then there exists at  most one lattice in $\Lpinv(N)_\sigma$ to which $\sigma$ is special.
\end{prop}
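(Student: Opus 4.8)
The plan is to work locally at $p$ and use the fact, from Lemma \ref{trivial} and the remark following Definition \ref{length}, that a symmetry $\sigma = \tau_w$ special to a ternary lattice $M$ splits off $w$ as a Jordan (orthogonal) summand over $\z_p$. So suppose $M, M' \in \Lpinv(N)_\sigma$ are both lattices to which $\sigma$ is special; we want $M = M'$. First I would note that away from $p$ nothing happens: $M_q = N_q = M'_q$ for all $q \neq p$, since $\Lambda_p$ only changes the lattice at $p$ and $\Lambda_p(M) = N = \Lambda_p(M')$ forces agreement at every other prime. So it suffices to prove $M_p = M'_p$ as sublattices of $V_p$, and since both contain $N = \Lp(L)$ with the same index data, really it suffices to pin down $M_p$ inside $N_p$ (up to the action that fixes $\sigma$).

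The key step is the local analysis at $p$. Write $w$ for a primitive vector of $N$ with $\sigma = \tau_w$; since $\sigma \in O(M)$ and $\sigma \in O(M')$, and $\sigma$ is special to each, we get over $\z_p$ orthogonal splittings $M_p = \z_p w \perp P$ and $M'_p = \z_p w \perp P'$, where by the remark after Definition \ref{length} the component $\z_p w$ is either the leading (unimodular, rank $1$) or the trailing Jordan component in each case — and crucially, which of the two it is depends only on whether the unimodular component of $M_p$ (equivalently of $N_p$, since these Jordan invariants are visible) has rank $1$, so it is the same alternative for $M$ and for $M'$. Now $\Lp(M_p) = \Lp(\z_p w \perp P) = \z_p w \perp \Lp(P)$ or $\z_p w \perp (\text{scaled }P)$ according to Lemma \ref{lambda} applied componentwise (here one uses that $\z_p w$ is a unimodular component, so $\Lambda_p$ multiplies it by $p$, or that it is already in $p\z_p$, so $\Lambda_p$ leaves it fixed — either way the behaviour of $\Lambda_p$ on the $w$-part is forced). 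Comparing with $N_p = \Lp(M_p) = \Lp(M'_p)$, the $w$-component on both sides is literally the same rank-$1$ lattice, so we may cancel it and deduce that $P$ and $P'$ are two binary $\z_p$-lattices on the orthogonal complement $(\z_p w)^\perp$ in $V_p$ with $\Lp(P) = \Lp(P')$ (the relevant $\Lambda_p$ or rescaling applied to a binary lattice). By Lemma \ref{lambda} and the description of $\Lambda_p$ on a binary $\z_p$-lattice — $\Lambda_p$ either multiplies the unimodular part by $p$ (recoverable) or acts as a bijection on lattices of bounded scale — the binary lattice $P$ is uniquely determined by $\Lp(P)$ among sublattices of its ambient binary space with the prescribed scale. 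Hence $P_p = P'_p$ and therefore $M_p = M'_p$; combined with the equality at all other primes, $M = M'$.

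The main obstacle I expect is the binary local uniqueness: showing that from the $\z_p$-lattice $\Lp(P)$ one recovers $P$ uniquely as a sublattice of the given binary quadratic $\q_p$-space with the given scale. This is not automatic for general binary lattices — $\Lambda_p$ can fail to be injective — but here the situation is constrained: $\z_p w$ has been peeled off as a genuine Jordan component, so $P$ has rank exactly $2$ and its position in the Jordan chain of $M_p$ is determined (it consists of whatever components of $M_p$ are not $\z_p w$). The possible $\z_p$-genus types for such $P$ are very limited, and one checks case by case — unimodular $\perp$ unimodular, unimodular $\perp$ ($p$-modular or higher), $p$-modular $\perp$ ($p$-modular or higher), etc. — that $\Lambda_p$ restricted to lattices of the right scale type is injective, essentially because on the unimodular component it is multiplication by $p$ (invertible on the relevant set) and on the deeper component it is the identity. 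One should also double-check the one genuinely delicate point: when $\z_p w$ is the \emph{trailing} component, i.e. $Q(w) \in p\z_p$, the splitting $M_p = \z_p w \perp P$ still holds by Lemma \ref{trivial} over $\z_p$ (no oddness hypothesis is needed there), so the argument goes through uniformly. With that local uniqueness in hand, the global conclusion $M = M'$ follows immediately, proving that $\Lpinv(N)_\sigma$ contains at most one lattice to which $\sigma$ is special.
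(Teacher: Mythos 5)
Your proof is correct and follows essentially the same route as the paper: split $M_p=\z_p w\perp P$ via Lemma \ref{trivial}, note that $\Lambda_p$ acts on the rank-one piece and on its binary complement by operations forced by Lemma \ref{lambda} (multiplication by $p$ on the unimodular Jordan component, the identity elsewhere), and recover both pieces of $M_p$ from $N_p$ while $M_q=N_q$ for $q\neq p$. The ``binary local uniqueness'' you worry about is immediate for exactly the reason you give, and the only blemish is notational: a vector primitive in $N$ on the line of $\sigma$ may be $p$ times a primitive vector of $M$, so the rank-one summand of $M_p$ is $\z_p(w/p)$ rather than $\z_p w$ in the unimodular-rank-one case, which does not affect the argument.
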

\begin{proof}
Suppose that $\sigma$ is special to a lattice $M \in \Lpinv(N)$.  Choose a primitive vector $w$ of $M$ for which $\sigma = \tau_w$, and let $G$ be the orthogonal complement of $w$ in $M$.  Then $M_p = \z_p w \perp G_p$ and
$$N_p = \Lp(M_p) = \begin{cases}
\z_p pw \perp G_p & \mbox{ if $Q(w) \not \equiv 0 \mod p$},\\
\z_p w \perp pG_p & \mbox{ otherwise}.
\end{cases}$$
Let $M'$ be another lattice in $\Lpinv(N)$ to which $\sigma$ is special.  Then $M'_p = \z_p w' \perp G_p'$, where $\sigma = \tau_{w'}$ with $w'$ primitive in $M'$.   Since $M_p \cong M'_p$,  it follows from the definition of special symmetry that $\z_p w' = \z_p w$.   Moreover, $N_p$ is also equal to $\Lp(M'_p)$, which implies that $G'_p = G_p$, whence $M' = M$.
\end{proof}

\begin{prop} \label{order8}
Suppose that $\Lp(L)=N$ and $\vert O(N)\vert$ is divisible by $8$. Let $M$ be a lattice in $\Lpinv(N)$.
Then $\vert O(M)\vert$ is divisible by $8$ if and only if there is a symmetry in $O(M)$ which is special to $M$.
\end{prop}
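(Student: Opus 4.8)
The plan is to analyze the two directions separately, using the structure theory of ternary lattices with $8 \mid |O(K)|$ developed in Section~\ref{isometry}.

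\emph{The easy direction.} Suppose first that $\tau_w \in O(M)$ is special to $M$, with $w$ a primitive vector of $M$. I would like to conclude that $8 \mid |O(M)|$. The idea is that being special forces $\z_p w$ to be an entire Jordan component of $M_p$ (as noted right after Definition~\ref{length}), so that $\Lp$ acts on the orthogonal complement $G$ of $w$ in $M$ only by a uniform scaling. Concretely, writing $M_p = \z_p w \perp G_p$, the lattice $N = \Lp(M)$ satisfies $N_p = \z_p p w \perp G_p$ or $\z_p w \perp p G_p$, and in either case $\tau_w \in O(N)$ as well. Now $8 \mid |O(N)|$ by hypothesis, and by Proposition~\ref{sym} the symmetry $\tau_w$ of $N$ sits in an orthogonal system $\{\tau_w, \tau_u, \tau_v\}$ of $N$ with $w,u,v$ mutually orthogonal. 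Transporting this back: since $\z_p w$ is an orthogonal Jordan summand of $N_p$ and of $M_p$, and since $u, v$ lie in the orthogonal complement of $w$ in $N$ (which is $\Lp$ applied, up to scaling, to the orthogonal complement of $w$ in $M$), one checks that $\tau_u, \tau_v$ lift to symmetries of $M$ as well — using Lemma~\ref{trivial} to split off $\z w$ from $M$ and then the fact that an orthogonal complement of rank $2$ has its symmetries determined by its binary Gram matrix, which is shared (up to scaling) with the corresponding sublattice of $N$. Hence $O(M)$ contains three mutually orthogonal symmetries, so $8 \mid |O(M)|$.

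\emph{The hard direction.} Conversely, suppose $8 \mid |O(M)|$; I want to produce a symmetry of $M$ that is special to $M$. Here I would go through the classification. If $|O(M)| = 48$ then $M \cong \bI, \bA$, or $\mathbf J$, and one simply inspects the explicitly listed sets $S(\bI), S(\bA), S(\mathbf J)$ together with the known local structure of $M_p$ (which must be one of the cases in Table~I with $\ord_p(dM) \geq 2$) to exhibit a symmetry of the required length $Q_M(\sigma) \bmod p$. If $|O(M)| = 16$, then $M$ is $K_3(a,b)$ or $K_4(a,b)$; if $|O(M)| = 24$, then $M = K_2(a,b)$; and if $|O(M)| = 8$, then $O(M) \cong \z_2^3$ is generated by $-I$ and three mutually orthogonal symmetries $\tau_w, \tau_u, \tau_v$. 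In the $K_2$, $K_3$, $K_4$ cases the orthogonal systems were written down explicitly at the end of Section~\ref{isometry}, and the key point is that among the symmetries in such a system, at least one has its vector spanning the leading or the trailing Jordan component of $M_p$ — so I need to check that the possible Jordan splittings of $M_p$ are always compatible with the orthogonal decompositions $M = \z x_i \perp (\cdots)$ coming from the orthogonal systems. For $|O(M)| = 8$, since $w,u,v$ are mutually orthogonal and primitive with $Q(w)Q(u)Q(v)$ dividing $8\, dM$ up to units, a valuation count at $p$ shows at least one of them has the correct $p$-adic valuation to be special.

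\emph{Main obstacle.} The delicate part is the hard direction in the cases $|O(M)| = 16$ and $24$ (and the $|O(M)|=8$ case): I must rule out the possibility that \emph{every} symmetry in $O(M)$ has the ``wrong'' length modulo $p$ relative to the rank of the unimodular Jordan component of $M_p$. This requires pinning down which Jordan types at $p$ (from Table~I, with $\ord_p(dM) \geq 2$) can occur for each of $K_1, K_2, K_3, K_4$ and checking, orthogonal-system by orthogonal-system, that a correctly-sized symmetry always appears; the fact that in $K_2, K_3, K_4$ there is a distinguished central symmetry $\tau_{x_3}$ (resp.\ $\tau_{x_1+x_2+2x_3}$) whose vector spans a rank-$1$ orthogonal summand is what makes this go through, since that summand can be matched to either the leading or trailing Jordan block depending on the parity of $\ord_p$ of the relevant entries. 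Once special symmetries are in hand, Proposition~\ref{specialone} is not needed here, but it is this proposition together with the present one that will later let us count $h_{8d}(N)$.
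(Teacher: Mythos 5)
Your forward direction (a special symmetry forces $8 \mid \vert O(M)\vert$) is essentially the paper's argument and is sound: write $M_p = \z_p w \perp G_p$ for the special symmetry $\tau_w$, note that $N_p$ is $\z_p pw \perp G_p$ or $\z_p w \perp pG_p$, complete $\tau_w$ to an orthogonal system $\{\tau_w,\tau_u,\tau_v\}$ of $N$ via Proposition \ref{sym}, and check that $\tau_u,\tau_v$ preserve $M_p$ (they fix $w$ and stabilize $G_p$) and $M_q = N_q$ for $q \neq p$, hence lie in $O(M)$.

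The converse is where the proposal breaks down. The case-by-case plan you outline (treating $\vert O(M)\vert = 8, 16, 24, 48$ separately and matching the explicit orthogonal systems of $K_2, K_3, K_4$ against the possible Jordan types of $M_p$) is never carried out: you repeatedly say you ``need to check'' the key point, and you flag as the main obstacle the possibility that every symmetry of $M$ has the wrong length modulo $p$. That possibility is the entire content of this direction, so as written the proof has a genuine gap. Moreover, the obstacle is illusory and no classification is needed. Since $8 \mid \vert O(M)\vert$, Proposition \ref{sym} applied to $M$ itself gives an orthogonal system $\{\tau_x,\tau_y,\tau_z\}$ in $O(M)$, and Lemma \ref{trivial} then yields $M_p = \z_p x \perp \z_p y \perp \z_p z$. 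This is a diagonal, hence Jordan, splitting of $M_p$ whose unimodular component has rank $1$ or $2$ (because $L$ is primitive and $\ord_p(dL) \geq 2$). If that rank is $1$, exactly one of $Q(x), Q(y), Q(z)$ is a unit in $\z_p$; if it is $2$, exactly one is a non-unit. In either case exactly one of $\tau_x, \tau_y, \tau_z$ is special to $M$ by Definition \ref{length}. This uniform two-line argument is what the paper uses, and it is precisely the step your proposal leaves unproven.
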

\begin{proof}
Suppose that $O(M)$ has a symmetry $\sigma$ which is special to $M$.  Let $w$ be a primitive vector in $M$ such that $\sigma = \tau_w$, and let $G$ be the orthogonal complement of $w$ in $M$.  Then $M_p = \z_p w \perp G_p$, and
$$N_p = \Lp(L_p) = \begin{cases}
\z_p pw \perp G_p & \mbox{ if $Q(w) \not \equiv 0$ mod $p$},\\
\z_pw \perp pG_p & \mbox{ otherwise}.
\end{cases}$$
Since $\vert O(N)\vert$ is divisible by 8, Proposition \ref{sym} shows that there exist symmetries $\tau_u, \tau_v \in O(N)$ where $w, u, v$ are mutually orthogonal.  Then $\tau_u$ and $\tau_v$ must be isometries of $G_p$, which means that both of them are isometries of $M_p$.  Since $N_q = L_q$ for any prime $q$ not equal to $p$, therefore both $\tau_u$ and $\tau_v$ are isometries of $M$.  The subgroup of $O(M)$ generated by $\sigma_w, \sigma_u, \sigma_v$ is of order 8.

Conversely, suppose that $\vert O(M)\vert$ is divisible by 8.  It follows from Lemma \ref{sym} that there are three mutually orthogonal primitive vectors $x, y, z$ in $M$ such that $\tau_x$, $\tau_y$, and $\tau_z$ are in $O(M)$.  Since $M_p = \z_px \perp \z_py \perp \z_p z$ by Lemma \ref{trivial}, exactly one of $\tau_x, \tau_y$, and $\tau_z$ is special to $M$.
\end{proof}

\begin{prop} \label{order12}
Let $M \in \Lpinv(N)$ and suppose that $\vert O(M) \vert = 12$.  Then there is no symmetry in $O(M)$ that is special to $M$.
\end{prop}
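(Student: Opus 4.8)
The plan is to invoke the classification of ternary lattices with small automorphism group from Section \ref{isometry} to pin down $M$, and then to read off the $p$-adic Jordan structure. Since $3$ divides $\vert O(M)\vert = 12$, Cauchy's theorem gives an isometry of order $3$ in $O(M)$, so by the discussion preceding Lemma \ref{1224} the lattice $M$ is isometric to $K_1(a,b)$ or $K_2(a,b)$ for some relatively prime positive integers $a,b$; because $\vert O(K_2(a,b))\vert = 24$, in fact $M \cong K_1(a,b)$, and $(a,b)$ is none of the three exceptional pairs. For such a lattice the symmetries of $M$ are $\tau_{x_1}, \tau_{x_2}, \tau_{x_1+x_2}$, and reading off the Gram matrix $K_1(a,b)$ one has $Q(x_1) = Q(x_2) = Q(x_1+x_2) = 2a$. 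Hence $Q_M(\sigma) = 2a$ for every symmetry $\sigma$ of $M$, and since $p$ is odd, $Q_M(\sigma) \equiv 0 \bmod p$ exactly when $p \mid a$.

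The next step is to reduce the assertion to a statement about $M_p$. Since $dM = dL$ (as $M \in \gen(L)$) and $\ord_p(dL) \ge 2$ by standing hypothesis, we have $\ord_p(dM) \ge 2$; together with $\scale(M_p) = \z_p$ this forces the unimodular Jordan component of $M_p$ to have rank $1$ or $2$. By Definition \ref{length}, a symmetry $\sigma$ of $M$ with $Q_M(\sigma) = 2a$ is special to $M$ precisely when that rank is $1$ and $p \nmid a$, or that rank is $2$ and $p \mid a$. Thus it suffices to prove that the unimodular component of $M_p$ has rank $2$ if and only if $p \nmid a$, since this rules out both cases at once.

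To prove this equivalence I would argue the two contrapositives. Suppose first that $p \mid a$; then $p \nmid b$ by coprimality, so $x_3$ has unit norm $b$, and the Gram--Schmidt substitution $x_1 \mapsto x_1' := x_1 + (a/b)x_3$, which is legitimate over $\z_p$ since $a/b \in \z_p$, gives $M_p = \z_p x_3 \perp (\z_p x_1' + \z_p x_2)$, where a short computation yields $Q(x_1') = a(2b-a)/b$, $B(x_1',x_2) = -a$, and $Q(x_2) = 2a$, all of which lie in $p\z_p$. So the orthogonal complement of $\z_p x_3$ has scale contained in $p\z_p$, and the unimodular component of $M_p$ is $\langle b \rangle$, of rank $1$. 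Conversely, suppose $p \nmid a$. If $p$ were $3$, then using $-2 \equiv 1 \bmod 3$ we would get $3b - 2a \equiv a \not\equiv 0 \bmod 3$, hence $\ord_3(dM) = \ord_3\!\bigl(a^2(3b-2a)\bigr) = 0$, contradicting $\ord_p(dM) \ge 2$; so $p \ne 3$. Then $\z_p x_1 + \z_p x_2$, whose Gram matrix $\left(\begin{smallmatrix} 2a & -a \\ -a & 2a \end{smallmatrix}\right)$ has determinant $3a^2 \in \z_p^{\times}$, is a unimodular rank-$2$ sublattice of $M_p$, so $M_p$ splits it off and its unimodular component has rank at least $2$; as $\ord_p(dM) > 0$, the rank is exactly $2$. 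This establishes the equivalence, and the proposition follows.

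The step to watch is the prime $p = 3$: the obvious orthogonal sum $\z w \perp (\z x_1 + \z x_2)$ inside $K_1(a,b)$, with $w$ as in \eqref{w3}, has index $3$ in $M$, so it does not directly compute $M_3$; instead one observes that $\ord_3(dM) \ge 2$ already forces $3 \mid a$, which returns us to the uniformly treated case $p \mid a$. Apart from that, all the local computations are routine.
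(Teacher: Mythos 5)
Your argument is correct and takes essentially the same route as the paper's proof: identify $M$ as a non-exceptional $K_1(a,b)$, observe that $Q_M(\sigma)=2a$ for every symmetry $\sigma$, and compare this against the rank of the unimodular Jordan component of $M_p$ using $\ord_p(dM)\ge 2$. The only cosmetic difference is that you case-split on $p\mid a$ versus $p\nmid a$ and diagonalize by hand, whereas the paper splits on $p=3$ versus $p\neq 3$ and reads the splitting off $M_p\cong\langle 2a,6a,3(3b-2a)\rangle$; the content is the same.
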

\begin{proof}
From Section \ref{isometry}, $M = K_1(a,b)$ with relatively prime positive integers $a$ and $b$.  The discriminant of $M$ is $a^2(3b - 2a)$, and $Q_M(\sigma) = 2a$ for each symmetry $\sigma \in S(M)$.  If $p \neq 3$, then $L_p \cong \langle 2a, 6a, 3(3b - 2a) \rangle$ and so none of the symmetries is special to $M$.

Suppose that $p = 3$.  Since $\Lambda_3(M) = N$, 3 divides the discriminant of $M$.  Hence 3 divides $a$, and so 3 does not divide $b$.  This implies $L_3 \cong \langle b, 2a, 2ab(3b - 2a) \rangle$, and thus none of the symmetries is special to $M$.
\end{proof}

\subsection{Fixed points of a symmetry}

Let $L_p = \langle \epsilon_1, p^\alpha\epsilon_2, p^\beta \epsilon_3\rangle$ as in Table I.  Recall that
$$e_{ij} = \begin{cases}
1 & \mbox{ if $-\epsilon_i\epsilon_j \in (\z_p^{\times})^2$},\\
-1 & \mbox{ otherwise}.
\end{cases}$$
In addition, we define
$$\eta_{ij} = \frac{1 + \left(\frac{\epsilon_i\epsilon_j}{p}\right)}{2}, \mbox{ and } \eta'_{ij} = \frac{1 - \left(\frac{\epsilon_i\epsilon_j}{p}\right)}{2}.$$
In Table II below, the cases {\bf (1)} to {\bf (8)} are divided as in Table I, $Q_N(\sigma)$ is defined in Definition \ref{length}, and $\Delta$ is a nonsquare unit in $\z_p$.  For a pair of $p$-adic integers $a$ and $b$, we write $a \sim b$ if $ab^{-1} \in (\z_p^\times)^2$. A boldface {\bf 1} indicates that there is one lattice in $\Lpinv(L)_\sigma$ to which $\sigma$ is special, under the specified conditions on $Q_N(\sigma)$.

\begin{prop} \label{symmetryfixedpoints}
Let $\sigma$ be a symmetry in $O(N)$.  Then the values of $\vert \Lpinv(N)_\sigma \vert$ are given in Table II.
\begin{table} [ht]
\centering
{\tabcolsep=3pt
\begin{tabular}{|c|c|c|c|c|}
\hline \rule[-2mm]{0mm}{8mm}
 &\textnormal{condition}   & $Q_N(\sigma) \sim p^2\epsilon_3$ & $Q_N(\sigma) \sim p^2\Delta\epsilon_3$ & \\ \cline{2-4}
\raisebox{2.5ex}[0cm][0cm]{\bf(1)} & $\vert \Lpinv(N)_\sigma\vert$ & $\displaystyle{\frac{p - e_{12}}{2}}+ {\mathbf 1}$ &
        $\displaystyle{\frac{p + e_{12}}{2}}$ & \\
\hline \rule[-2mm]{0mm}{8mm}
  &\textnormal{condition}   & $\ord_p(Q_N(\sigma)) \geq 3$ & $\ord_p(Q_N(\sigma)) = 2$ & \\ \cline{2-4}
\raisebox{2.5ex}[0cm][0cm]{\bf(2)} & $\vert \Lpinv(N)_\sigma\vert$ & {\bf 1} & $p$ & \\
\hline \rule[-2mm]{0mm}{8mm}
  &\textnormal{condition}  & $Q_N(\sigma) \sim p^2\epsilon_1$ & $\ord_p(Q_N(\sigma)) = 1$ & \\ \cline{2-4}
\raisebox{2.5ex}[0cm][0cm]{\bf(3)}& $\vert \Lpinv(N)_\sigma\vert$  & {\bf 1} & $1$ & \\
\hline \rule[-2mm]{0mm}{8mm}
   &\textnormal{condition}   & $Q_N(\sigma) \sim p^2\Delta\epsilon_1$ & $Q_N(\sigma) \sim p^2\epsilon_1$ & $\ord_p(Q_N(\sigma)) = 1$ \\ \cline{2-5}
\raisebox{2.5ex}[0cm][0cm]{\bf(4)}& $\vert \Lpinv(N)_\sigma\vert$ & $\eta'_{13}$ & $\eta_{13} + {\mathbf 1}$ &
    $\displaystyle{\frac{p - e_{13}}{2}}$\\
\hline \rule[-2mm]{0mm}{8mm}
  &\textnormal{condition}   & $\ord_p(Q_N(\sigma)) \geq 3$ & $\ord_p(Q_N(\sigma)) = 2$ & $\ord_p(Q_N(\sigma)) = 1$ \\ \cline{2-5}
\raisebox{2.5ex}[0cm][0cm]{\bf(5)}& $\vert \Lpinv(N)_\sigma\vert$  & $1$ & {\bf 1} & $p$ \\
\hline \rule[-2mm]{0mm}{8mm}
   &\textnormal{condition}   & $Q_N(\sigma) \sim p^2\epsilon_1$ & $Q_N(\sigma) \sim p^2\Delta\epsilon_1$ & \\ \cline{2-4}
\raisebox{2.5ex}[0cm][0cm]{\bf(6)}& $\vert \Lpinv(N)_\sigma\vert$ & $\displaystyle{\frac{p - e_{23}}{2}} + {\mathbf 1}$ &
        $\displaystyle{\frac{p + e_{23}}{2}}$ &\\
\hline \rule[-2mm]{0mm}{8mm}
  &\textnormal{condition}   & $\ord_p(Q_N(\sigma)) \geq 3$ & $Q_N(\sigma) \sim p^2\epsilon_1$ & $Q_N(\sigma) \sim p^2\Delta\epsilon_1$ \\ \cline{2-5}
\raisebox{2.5ex}[0cm][0cm]{\bf(7)}& $\vert \Lpinv(N)_\sigma\vert$ & $\displaystyle{\frac{p - e_{12}}{2}}$ & $ p\,\eta_{12} + {\mathbf 1}$ &
            $ p\,\eta'_{12}$\\
\hline \rule[-2mm]{0mm}{8mm}
  &\textnormal{condition}   & $\ord_p(Q_N(\sigma)) \geq 3$ & $\ord_p(Q_N(\sigma)) = 2$ & \\ \cline{2-4}
\raisebox{2.5ex}[0cm][0cm]{\bf(8)}& $\vert \Lpinv(N)_\sigma\vert$ & $p$ & {\bf 1} & \\
\hline
\end{tabular}}
\vskip 0.2cm
\center{\rm Table II}
\end{table}
\end{prop}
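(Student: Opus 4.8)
The plan is to compute $\vert \Lpinv(N)_\sigma \vert$ by passing to the localization at $p$, since $N_q = L_q$ for every prime $q \neq p$ forces every $M \in \Lpinv(N)$ to agree with $L$ away from $p$; thus a fixed point of $\sigma$ is determined entirely by its $\z_p$-structure. Concretely, writing $\sigma = \tau_w$ with $w$ a primitive vector of $N$, a lattice $M \in \Lpinv(N)_\sigma$ must satisfy $\Lp(M_p) = N_p$ and have $\tau_w \in O(M_p)$; conversely any $\z_p$-lattice $M_p$ with these two properties (together with the fixed global data) patches to a genuine $M \in \Lpinv(N)_\sigma$. So the problem reduces to: count the $\z_p$-lattices $M_p$, lying in the genus of $L_p$, with $\Lp(M_p) = N_p$ and admitting $\tau_w$ as an isometry. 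By Lemma \ref{trivial}, $\tau_w \in O(M_p)$ means $\z_p w$ splits off, so $M_p = \z_p w \perp G_p$ for some binary $\z_p$-lattice $G_p$ in the orthogonal complement $W$ of $w$; and then $N_p = \Lp(M_p)$ is either $\z_p p w \perp G_p$ or $\z_p w \perp p G_p$ according to whether $Q(w)$ is a unit or is divisible by $p$ (this is where $Q_N(\sigma)$, equivalently $\ord_p(Q_N(\sigma))$, enters). So in each case the count becomes: given the prescribed Jordan data of $L_p$ and the prescribed image $N_p$, how many binary $\z_p$-lattices $G_p \subseteq W$ satisfy the resulting equation $\Lp(G_p) = G'_p$ or $G_p = $ (prescribed) with $G_p$ having the right scale?

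The key computation is therefore a local one on binary lattices. First I would split into the eight Jordan-type cases of Table I, and within each case into subcases according to $\ord_p(Q_N(\sigma))$ and, when $\ord_p(Q_N(\sigma)) = 2$, according to the square class $Q_N(\sigma) \sim p^2\epsilon_i$ versus $p^2\Delta\epsilon_i$; these subcases are exactly the column headings in Table II. In the subcases where $\sigma$ is \emph{special} to some $M$ (Definition \ref{length}) — i.e. where $\z_p w$ is forced to be the leading or trailing Jordan component — Proposition \ref{specialone} already tells us there is at most one such $M$, accounting for the boldface $\mathbf 1$; what remains is to count the non-special fixed points, where $w$ sits in a ``wrong'' spot and $\z_p w$ must combine with part of a higher-scale Jordan component. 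For those, the relevant count is the number of primitive vectors $w'$ of a given length inside a fixed unimodular or modular binary $\z_p$-lattice, up to the equivalence that produces the same $G_p$; this is a standard count governed by the local density / the number of representations of a unit by a binary unimodular form over $\z_p$, which is where the factors $\frac{p \pm e_{ij}}{2}$, $p$, $p\eta_{ij}$, $\eta'_{ij}$, etc. come from (note the consistency check $\sum_\sigma \text{(entry)} $ over the conjugacy classes should be compatible with Proposition \ref{ani1-1} and Table I).

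In more detail: for case {\bf (1)}, $L_p \cong \langle \epsilon_1, \epsilon_2, p^2\epsilon_3\rangle$ and $N_p = \Lp(L_p) \cong \langle p\epsilon_1, p\epsilon_2, p^2\epsilon_3\rangle \sim \langle \epsilon_1, \epsilon_2\rangle\text{-part scaled}$; a fixed point $M = \z_p w \perp G_p$ has $Q(w)$ either a unit (then $\z_p w$ is the leading component, $\sigma$ special, one lattice — but this requires $Q_N(\sigma)$ a unit, which is not a column of the table, so the table only records $\ord_p(Q_N(\sigma)) = 2$) or $\ord_p Q(w) = 2$; in the latter case $w$ lies in the $p^2$-modular part after correction, $G_p$ is a unimodular binary lattice with discriminant $\sim \epsilon_1\epsilon_2$, and the count of admissible $G_p$ inside the fixed unimodular binary form on $W$ is $\frac{p - e_{12}}{2}$ (the $+\mathbf 1$ being the sporadic special lattice that also lands in the column $Q_N(\sigma) \sim p^2\epsilon_3$). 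The remaining cases are handled by the same recipe with the arithmetic of binary $\z_p$-forms: in the split/anisotropic unimodular cases one uses that a binary unimodular $\z_p$-lattice represents a unit $u$ primitively in exactly $p - e$ or $p + e$ essentially-distinct ways (where $e$ records whether $-(\text{disc})$ is a square), and in the mixed cases {\bf (4)}, {\bf (7)} one tracks how the $\eta_{ij}, \eta'_{ij}$ split the vectors by the square class of the hyperbolic versus anisotropic plane.

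The main obstacle I anticipate is bookkeeping the non-special contributions correctly in the ``mixed'' Jordan cases {\bf (4)} and {\bf (7)} — where $L_p$ has three distinct scales $\langle \epsilon_1, p\epsilon_2, p^2\epsilon_3\rangle$ (case {\bf (4)}) or $\langle \epsilon_1, p^2\epsilon_2, p^\beta\epsilon_3\rangle$ with $\beta \geq 3$ (case {\bf (7)}) — because there the vector $w$ with $\ord_p Q(w) = 1$ or $= 2$ can interact with more than one Jordan block, so one must carefully determine which square class the complementary binary lattice $G_p$ falls into (hence the appearance of \emph{both} $\eta_{ij}$ and $\eta'_{ij}$, and the factor $p$ in {\bf (7)} coming from the extra modular block). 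Verifying that the special lattice lands in precisely the column claimed (the placement of the boldface $\mathbf 1$) and that no admissible $G_p$ is double-counted between the $\epsilon_i$ and $\Delta\epsilon_i$ columns will require a careful, case-by-case squareclass analysis over $\z_p$, but no idea beyond Lemma \ref{trivial}, Proposition \ref{specialone}, and the elementary theory of binary quadratic forms over $\z_p$ is needed.
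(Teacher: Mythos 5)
Your strategy is the same as the paper's: localize at $p$ (lattices in $\Lpinv(N)$ agree with $L$ away from $p$), use Lemma \ref{trivial} to split $M_p = \z_p w' \perp G_p$ along the primitive vector $w'$ of $M$ defining $\sigma$, let Proposition \ref{specialone} account for the boldface $\mathbf 1$, and count the non-special fixed points as over-lattices of a binary $\z_p$-lattice, i.e.\ as half the number of representations of a prescribed unit by a binary unimodular form over $\mathbb F_p$ (whence $\frac{p\pm e_{ij}}{2}$) or as over-lattices of a maximal lattice on an anisotropic plane (whence the $\eta_{ij}, \eta'_{ij}$ and the lone $1$'s). The paper likewise writes out only one case (its Case {\bf (4)}) and declares the rest analogous, so the outline is fine.

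However, the one case you sketch concretely, Case {\bf (1)}, contains two slips you should fix before executing the remaining cases. First, with $L_p \cong \langle \epsilon_1,\epsilon_2,p^2\epsilon_3\rangle$, Lemma \ref{lambda} gives $N_p \cong \langle p^2\epsilon_1, p^2\epsilon_2, p^2\epsilon_3\rangle$ (the unimodular component is multiplied by $p$, so its Gram matrix is scaled by $p^2$), not $\langle p\epsilon_1,p\epsilon_2,p^2\epsilon_3\rangle$; in particular every symmetry of $N$ has $\ord_p(Q_N(\sigma))=2$, which is why both columns of row {\bf (1)} have order-$2$ conditions and why ``$Q_N(\sigma)$ a unit'' never arises. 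Second, you have the special/non-special dichotomy inverted here: by Definition \ref{length}, when the unimodular component of $M_p$ has rank $2$ a symmetry is special to $M$ precisely when $Q_M(\sigma)\equiv 0 \bmod p$, i.e.\ when $\z_p w'$ is the trailing component $\langle p^2\epsilon_3\rangle$ (so $Q_N(\sigma)\sim p^2\epsilon_3$, matching the placement of the $\mathbf 1$), while the non-special lattices are those with $Q_M(\sigma)$ a unit and $Q_N(\sigma)=p^2Q_M(\sigma)$. Your parenthetical claim that the special case ``requires $Q_N(\sigma)$ a unit, which is not a column of the table'' contradicts your own (correct) placement of the $+\mathbf 1$ one sentence later. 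Since the entire content of the proposition is getting exactly this bookkeeping right in all eight rows, this inconsistency needs to be resolved; once it is, the rest of your plan goes through as in the paper.
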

\begin{proof}
We will provide the detail only for Case {\bf (4)} since the arguments used in this case can be applied to prove the other cases.  So, $L_p \cong \langle \epsilon_1, p\epsilon_2, p^2\epsilon_3 \rangle$, and $N_p \cong \langle p^2\epsilon_1, p \epsilon_2, p^2 \epsilon_3\rangle$.  Let $\sigma = \tau_w$ be a symmetry in $O(N)$ with $w$ a primitive vector in $N$.  Thus, $Q_N(\sigma) = Q(w)$, which has only the three possibilities listed in Table II.  By Lemma \ref{trivial}, $N_p = \z_p w \perp G_p$ where $G$ is the orthogonal complement of $w$ in $N$.  In below, for any  $a \in \z_p$ we write $\overline{a}$ to denote the canonical image of $a$ in $\z_p/p\z_p \cong \mathbb F_p$.

Suppose that $Q_N(\sigma) \sim p\epsilon_2$.  Then $\z_pw \cong \langle p\epsilon_2 \rangle$ and $G_p \cong \langle p^2\epsilon_1, p^2\epsilon_3\rangle$.  The number of $M \in \Lpinv(N)_\sigma$ is then equal to half of the number of representations of $\overline{\epsilon_1}$ by the quadratic space $\langle \overline{\epsilon_1}, \overline{\epsilon_3}\rangle$ over $\mathbb F_p$, which is $\frac{p - e_{13}}{2}$ by \cite[Lemma1.3.2]{ki}.

Now, suppose that $Q_N(\sigma) \sim p^2\epsilon_1$.  It is clear that there is exactly one lattice in $\Lpinv(L)_\sigma$ to which $\sigma$ is special, namely the one whose $p$-adic completion is $\z_p(p^{-1}w) \perp G_p$.  If $\sigma$ is not special to an $M \in \Lpinv(N)_\sigma$, then $\z_p w \cong \langle p^2\epsilon_1 \rangle$ would be a Jordan component of $L_p$ and hence $\epsilon_1 \sim \epsilon_3$.   If that is the case, then the number of lattices in $\Lpinv(N)_\sigma$ to which $\sigma$ is not special is equal to the number of over-lattices of $\langle p\epsilon_2, p^2\epsilon_3 \rangle$ that are isometric to $\langle p\epsilon_2, \epsilon_3 \rangle$.  This number is 1 because  $\langle p\epsilon_2, \epsilon_3 \rangle$ is a $\z_p$-maximal lattice on an anisotropic quadratic space over $\q_p$.

Finally, let us assume that $Q_N(\sigma) \sim p^2\Delta\epsilon_1$.  It is easy to see that if $\Delta\epsilon_1 \not \sim \epsilon_3$, then $\vert \Lpinv(N)_\sigma \vert = 0$.  Suppose that $\Delta\epsilon_1 \sim \epsilon_3$.  Then $\vert \Lpinv(N)_\sigma \vert$ is  equal to the number of over-lattices of $\langle p^2\epsilon_1, p\epsilon_2 \rangle$ that are isometric to $\langle \epsilon_1, p\epsilon_2\rangle$, which is 1 as is explained in the last paragraph.
\end{proof}

\subsection{Another equation}

Suppose that $N = \Lp(L)$.  We define $s = s(N,L)$ to be the number of symmetry $\sigma \in O(N)$ with a lattice $M \in \Lpinv(N)_\sigma$ to which $\sigma$ is special.  The next corollary gives us another equation involving the $h_{2d}(N)$'s.

\begin{prop} \label{8fin}
Suppose that $N = \Lp(L)$, and that $p \neq 3$ when $\vert O(N) \vert = 24$.  If $\vert O(N)\vert$ is divisible by $8$, then
\begin{equation} \label{s}
s=\begin{cases}
h_8(N) &\mbox{ if $\vert O(N)\vert = 8$},\\
h_{16}(N)+2h_8(N)  &\mbox{ if $\vert O(N)\vert =16$}, \\
h_{24}(N)+3h_8(N)  &\mbox{ if $\vert O(N)\vert = 24$},\\
3h_{16}(N) + 6h_8(N) & \mbox{ if $\vert O(N) \vert = 48$}.
\end{cases}
\end{equation}
Furthermore, suppose that $\sigma$ is a symmetry in $O(N)$, $48 > \vert O(N)\vert >8$, and $M \in \Lpinv(N)_\sigma$ to which $\sigma$ is special.  Then $O(M)=O(N)$ if and only if $\sigma$ is contained in the center of $O(N)$.
\end{prop}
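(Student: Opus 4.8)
The plan is to establish the two displays of \eqref{s} by a double-counting argument over the pairs $(\sigma, M)$ where $\sigma$ is a symmetry in $O(N)$, $M \in \Lpinv(N)_\sigma$, and $\sigma$ is special to $M$; by Proposition \ref{specialone}, for each such $\sigma$ there is at most one such $M$, so $s$ is precisely the number of symmetries $\sigma \in O(N)$ that admit a special companion lattice. First I would fix a lattice $M \in \Lpinv(N)$ with $\vert O(M) \vert$ divisible by $8$; by Proposition \ref{order8} such $M$ are exactly the ones that carry a symmetry special to $M$, and by Corollary \ref{con2} every symmetry of $M$ restricts to a symmetry of $N$ (with the same fixed vector, hence special to $M$ as a symmetry of $N$). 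So the count of special $(\sigma,M)$-pairs equals $\sum_M \#\{\text{symmetries of } M \text{ that are special to } M\}$, the sum ranging over classes $M \in \Lpinv(N)/\!\!\sim$ with $8 \mid \vert O(M)\vert$. Then I would invoke the explicit description of $S(K)$ from Section \ref{isometry}: a ternary lattice with $\vert O(M)\vert = 8$ has exactly three symmetries, all forming an orthogonal system, so by the last line of the proof of Proposition \ref{order8} exactly one of them is special; a lattice with $\vert O(M)\vert = 16$ (so $M = K_3$ or $K_4$) or $24$ (so $M = K_2$) has a distinguished central symmetry $\tau$ which, lying in every orthogonal system, is always special, plus the non-central symmetries, of which exactly one in each orthogonal system is special — giving $2$ special symmetries when $\vert O(M)\vert = 16$ and, by inspecting the three orthogonal systems of $K_2(a,b)$, exactly $3$ when $\vert O(M)\vert = 24$; and a lattice with $\vert O(M)\vert = 48$ has $6$ special symmetries (three orthogonal systems, two special symmetries each, the central one shared so $3 + 3 = 6$). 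One must also note that $\vert O(M)\vert = 12$ is excluded by Proposition \ref{order12}, and $48 \mid \vert O(M)\vert$ forces $48 = \vert O(N)\vert$ by Corollary \ref{34fin} together with Propositions \ref{order3}--\ref{order4}, so the contributing $M$ have $\vert O(M)\vert \in \{8, 16, 24\}$ when $\vert O(N)\vert < 48$, and $\vert O(M)\vert \in \{8, 48\}$ when $\vert O(N)\vert = 48$ — the latter case also using that no $M$ with $\vert O(M)\vert = 16$ or $24$ can satisfy $\Lp(M) = N$ when $\vert O(N)\vert = 48$, since restriction would force $\vert O(N)\vert \mid \vert O(M)\vert$. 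Summing $h_8(N)$ copies of $1$, $h_{16}(N)$ copies of $2$, $h_{24}(N)$ copies of $3$, and (in the last case) $h_{48}(N) = 1$ copy of $6$ against the $6h_8(N)$ from the order-$8$ classes, and using $h_{24}(N) = h_{48}(N) = 0$ or $h_{16}(N) = h_{48}(N) = 0$ as dictated by $\vert O(N)\vert$, yields the four cases of \eqref{s}.

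For the final sentence, suppose $48 > \vert O(N)\vert > 8$, $\sigma = \tau_w$ is a symmetry of $N$, and $M \in \Lpinv(N)_\sigma$ has $\sigma$ special to $M$ with $w$ primitive in $M$. Write $G$ for the orthogonal complement of $w$ in $M$; then $\z_p w$ is the leading or last Jordan component of $M_p$ (the remark after Definition \ref{length}). By Corollary \ref{con2}, $O(M) \hookrightarrow O(N)$, so $\vert O(M)\vert \le \vert O(N)\vert$; by Proposition \ref{order8}, $8 \mid \vert O(M)\vert$; by Proposition \ref{order12}, $\vert O(M)\vert \ne 12$; and $\vert O(M)\vert = 48$ is impossible by the Corollary after Lemma \ref{1224} combined with the fact that $N$ is not isometric to $\bI, \bA,$ or $\mathbf J$ (else $\vert O(N)\vert = 48$). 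Hence $\vert O(M)\vert \in \{8, 16, 24\}$, and in all three cases $O(M)$ and $O(N)$ are generated by $-I$ and their symmetries (Section \ref{isometry}). If $\sigma$ is central in $O(N)$, then since $\vert O(N)\vert \in \{16, 24\}$ has a unique central symmetry $\tau$, we get $\sigma = \tau$; now $\tau$ is special and one checks from the explicit orthogonal systems that the number of symmetries of $N$ making $\tau$ the leading/last component, together with $N_q = M_q$ for $q \ne p$, pins down $M$ uniquely and forces $M$ to carry the full orthogonal system of $\tau$, so $\vert O(M)\vert = \vert O(N)\vert$ and thus $O(M) = O(N)$. Conversely, if $O(M) = O(N)$ then $\sigma$, being special to $M$, has $\z_p w$ as leading or last component of $M_p$; but a non-central symmetry of $K_2(a,b), K_3(a,b),$ or $K_4(a,b)$ has its fixed vector spanning a line that is \emph{not} an orthogonal summand of the lattice scaled appropriately (only $\tau_{x_3}$ for $K_2, K_3$ and $\tau_{x_1+x_2+2x_3}$ for $K_4$ split off), which is incompatible with $\sigma$ being special, a contradiction — so $\sigma$ must be the central symmetry.

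\textbf{Main obstacle.} The delicate part is the bookkeeping in the converse direction of the final sentence and, more generally, verifying that when $\vert O(N)\vert = 16$ or $24$ a non-central special symmetry cannot have $O(M) = O(N)$: one has to match the Jordan position of $\z_p w$ against which vectors in $K_2, K_3, K_4$ span orthogonal summands, and then argue that if $O(M) = O(N)$ the extra symmetries of $N$ would have to be symmetries of $M$ fixing vectors orthogonal to $w$, which (combined with $\vert O(M)\vert \le 24$ and the structure of orthogonal systems) is only consistent when $w$ spans the center's axis. The enumeration of special symmetries for each order — particularly that $\vert O(M)\vert = 24$ gives exactly $3$ and $48$ gives exactly $6$ — likewise rests on the explicit orthogonal-system lists in Section \ref{isometry} rather than on any slicker principle, so the proof is essentially a careful but routine case check once the framework above is in place.
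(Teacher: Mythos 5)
There is a genuine gap, and it sits at the heart of your double count. You assert that a lattice $M$ with $\vert O(M)\vert = 16$ has two special symmetries, one with $\vert O(M)\vert = 24$ has three, and one with $\vert O(M)\vert = 48$ has six. In fact every $M$ with $8 \mid \vert O(M)\vert$ has \emph{exactly one} special symmetry (this is Remark \ref{uniquespecial}, and it is the substance of the paper's proof): for $K_3(a,b) = \langle a,a,b\rangle$ with $\gcd(a,b)=1$ and $\ord_p(a^2b)\ge 2$, either $p\mid a$ (unimodular component of rank $1$, so special means $Q\not\equiv 0 \bmod p$, and only $\tau_{x_3}$ qualifies since the other four symmetries have $Q = a$ or $2a$) or $p^2\mid b$ (unimodular rank $2$, special means $Q\equiv 0\bmod p$, and again only $\tau_{x_3}$ qualifies); the same computation works for $K_2$ and $K_4$. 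Your claim that ``exactly one symmetry in each orthogonal system is special'' is true, but that one symmetry is the \emph{same} central $\tau$ in every system — there is no second special symmetry. The case $\vert O(M)\vert = 48$ cannot occur at all ($dM$ would be $1$, $4$, or $16$, contradicting $\ord_p(dM)\ge 2$), so $h_{48}(N)=0$, not $1$. The missing ingredient that actually produces the coefficients in \eqref{s} is that the sum must run over \emph{lattices} in $\Lpinv(N)$, not over classes: by Lemma \ref{conjugacy} a class with $\vert O(M)\vert = 2d$ is a single $O(N)$-orbit of size $\vert O(N)\vert/2d$, and by Proposition \ref{specialone} distinct lattices contribute distinct symmetries to $s$, so an order-$8$ class contributes $\vert O(N)\vert/8$ to $s$, an order-$16$ class contributes $\vert O(N)\vert/16$, etc. Your two deviations do not cancel: as written your recipe gives $s = h_8 + 2h_{16}$ when $\vert O(N)\vert = 16$, which is not the asserted $2h_8 + h_{16}$.

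The second assertion also has problems in both directions. For the ``only if'' direction you argue that a non-central symmetry's axis is not a $\z$-orthogonal summand of $K_2$, $K_3$, or $K_4$; but speciality is a $p$-adic congruence condition on $Q_M(\sigma)$ (Definition \ref{length}), and over $\z_p$ with $p$ odd \emph{every} symmetry's axis splits off by Lemma \ref{trivial}, so failure to split over $\z$ is irrelevant — the correct reason the non-central symmetries are not special is the $Q$-value computation above. For the ``if'' direction, showing that $M$ ``carries the full orthogonal system of $\tau$'' only yields $\vert O(M)\vert \ge 8$; what is needed is that \emph{every} symmetry of $N$ commutes with the central $\sigma$ and hence (by the argument in the proof of Proposition \ref{order8}) lies in $O(M)$, which forces $O(M) = O(N)$ because $O(N)$ is generated by $-I$ and its symmetries.
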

\begin{proof}
Let $\sigma$ be a symmetry in $O(N)$, and suppose that $M \in \Lpinv(L)_\sigma$ to which $\sigma$ is special.  Then $\vert O(M) \vert$ is divisible by 8 by Proposition \ref{order8}.  So, it follows from Section \ref{isometry} that either $\vert O(M) \vert = 8$ or $M = K_i(a,b)$ for some $i \in \{2,3,4\}$ and positive integers $a, b$.   Note that $\vert O(M) \vert$ cannot be 48.  If $\vert O(M) \vert = 8$, then $O(M)$ has exactly three symmetries $\tau_w, \tau_u, \tau_v$, and $w, u, v$ are mutually orthogonal.  So, exactly one of these three symmetries is special to $M$.  Suppose that $M = K_i(a,b)$.  Following the notations used in Section \ref{isometry}, one can check that $\tau_{x_3}$ for $K_2(a,b)$ or $K_3(a,b)$ and $\tau_{x_1 + x_2 + 2x_3}$ for $K_4(a,b)$ is the only symmetry that is special to $M$ (note that this requires $p \neq 3$ for $K_2(a,b)$). This symmetry is the only symmetry in the center of $O(M)$.  This proves (\ref{s}) and the ``only if" part of the second assertion.

For the ``if" part of the second assertion, note that the proof of Proposition \ref{order8} actually shows that any symmetry in $O(N)$ which commutes with $\sigma$ is in $O(M)$.  So, if $\sigma$ is in the center of $O(N)$, then $O(M)$ contains all the symmetries in $O(N)$, and therefore $O(M)$ is equal to $O(N)$.
\end{proof}

\begin{rmk}\label{uniquespecial}
Embedding in the statement (and the proof) of Proposition \ref{8fin} is the fact that if $M$ has a special symmetry, then that symmetry is the only special symmetry of $M$.
\end{rmk}

The value of $s$ depends on the Jordan decomposition of $L_p$ and $Q_N(\sigma)$ for all symmetries $\sigma$ of $N$.  Suppose that $L_p \cong \langle \epsilon_1, p^\alpha \epsilon_2, p^\beta \epsilon_3 \rangle$ as in Table I, and $\epsilon_i \in \z_p^\times$ for $i = 1,2 ,3$.
Define an integer $t$ and $\epsilon \in \z_p^\times$ by
\begin{equation} \label{t}
(t, \epsilon) = \begin{cases}
(\beta, \epsilon_3) & \mbox{ when $\alpha = 0$},\\
(2, \epsilon_1) & \mbox{ otherwise},
\end{cases}
\end{equation}
and let
$$\mathfrak S(N,L) = \{\sigma \in S(N) : \ord_p(Q_N(\sigma)) = t \mbox{ and } p^{-t}Q_N(\sigma)\epsilon \in (\z_p^\times)^2\}.$$

\begin{prop} \label{svalue}
Suppose that $N = \Lp(L)$ and $\vert O(N)\vert$ is divisible by $8$.  Then $s$ is the cardinality of the set $\mathfrak S(N,L)$.
\end{prop}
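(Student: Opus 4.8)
The plan is to compare the two equations for $s$ that we already have in hand: the combinatorial identity \eqref{s} of Proposition \ref{8fin}, which expresses $s$ as a specific linear combination of the $h_{2d}(N)$'s depending only on $\vert O(N)\vert$, and a direct count of the set $\mathfrak S(N,L)$ in terms of the same $h_{2d}(N)$'s. Showing that the right-hand side of \eqref{s} equals $\vert \mathfrak S(N,L)\vert$ would then finish the proof. The key observation linking the two sides is Proposition \ref{specialone} together with Remark \ref{uniquespecial}: a symmetry $\sigma$ of $N$ contributes to $s$ precisely when there exists a (necessarily unique) $M \in \Lpinv(N)_\sigma$ to which $\sigma$ is special, and by Proposition \ref{order8} such an $M$ must have $\vert O(M)\vert$ divisible by $8$; conversely each $M \in \Lpinv(N)$ with $\vert O(M)\vert$ divisible by $8$ carries exactly one special symmetry (Remark \ref{uniquespecial}, or the orthogonal-system count of Proposition \ref{8fin}).

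First I would show that $\mathfrak S(N,L)$ is exactly the set of special symmetries of $N$ that occur, i.e. $\mathfrak S(N,L) = \{\sigma \in S(N) : \sigma$ is special to some $M \in \Lpinv(N)_\sigma\}$. This is where the choice of $(t,\epsilon)$ in \eqref{t} enters: by the remark following Definition \ref{length}, if $\sigma = \tau_w$ is special to some $M$ with $w$ primitive in $M$, then $\z_p w$ is either the leading unimodular block or the last (highest-scale) Jordan block of $M_p$; scaling by $\lambda_p$ multiplies that block's scale by $p^2$ when it is the block being ``moved'', and one checks case by case (using the Jordan shapes of $L_p$ listed in Table I) that the resulting value of $Q_N(\sigma) = Q(\lambda_p\text{-scaled }w)$ has $p$-adic order exactly $t$ and unit part $\sim \epsilon$ as defined in \eqref{t}. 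Conversely, given any $\sigma \in S(N)$ with $\ord_p(Q_N(\sigma)) = t$ and $p^{-t}Q_N(\sigma)\epsilon$ a square, one reverses the construction (as in the proofs of Propositions \ref{specialone} and \ref{symmetryfixedpoints}) to exhibit the unique $M \in \Lpinv(N)_\sigma$ to which $\sigma$ is special, namely the one with $M_p = \z_p(p^{-1}w)\perp G_p$ or $M_p = \z_p w \perp p^{-1}G_p$ according to the case. This uses that $L_p$ itself realizes the required unit at the appropriate block, which is exactly what $\epsilon_1$ (resp. $\epsilon_3$) records.

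Next I would count $\vert\mathfrak S(N,L)\vert$ via the $h_{2d}(N)$'s: each $M \in \Lpinv(N)$ with $\vert O(M)\vert = 8$ has its unique special symmetry, and that symmetry lies in an orthogonal system of $M$ hence in one of $N$, contributing one element of $\mathfrak S(N,L)$ per such $M$ — but since $\vert O(N)\vert / \vert O(M)\vert$ classes in the $O(N)$-orbit of $M$ all sit inside $\Lpinv(N)$, the number of $\emph{symmetries}$ of $N$ arising this way is $h_8(N)$ times the number of symmetries of $N$ conjugate to the special one; examining the conjugacy class sizes recorded in Section \ref{isometry} ($S(K_1)$ has $3$, the non-central class of $S(K_2)$ has $3$, the central symmetry is alone, etc.) gives the coefficients $1, 2, 3, 6$ of $h_8(N)$ and $0, 1, 1, 3$ of $h_{16}(N)$ (or $h_{24}(N)$) appearing in \eqref{s}. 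This is precisely the same bookkeeping that proves \eqref{s}, so the two expressions agree term by term.

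The main obstacle I anticipate is the case analysis verifying that $(t,\epsilon)$ in \eqref{t} is the \emph{correct} invariant — in other words, that a special symmetry of $N$ necessarily has $Q_N(\sigma)$ of the prescribed order and unit part, with no spurious symmetries of $N$ satisfying the numerical condition but failing to be special to any lattice in $\Lpinv(N)$. Here one must use the hypothesis $\vert O(N)\vert$ divisible by $8$ (so that $N$ is one of $K_2, K_3, K_4$ or an $8$-element group, and Proposition \ref{sym} applies to produce the orthogonal mates $\tau_u,\tau_v$), the restriction $\ord_p(dL)\geq 2$ fixed at the outset, and the $p\neq 3$ proviso (needed only when $\vert O(N)\vert = 24$, exactly as in Proposition \ref{8fin}, to guarantee $N_3 = L_3$ so that the relevant Jordan block of $M_p$ is forced). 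Once the identification $\mathfrak S(N,L) = \{$occurring special symmetries$\}$ is nailed down, the equality $s = \vert\mathfrak S(N,L)\vert$ is immediate from the definition of $s$ and Remark \ref{uniquespecial}.
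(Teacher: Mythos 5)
Your second paragraph---identifying $\mathfrak S(N,L)$ with the set of symmetries of $N$ that are special to some lattice in $\Lpinv(N)$, by tracking which Jordan block of $M_p$ the vector $w$ occupies and how $\Lambda_p$ rescales it---is exactly the paper's argument, and, as you note at the end, once that identification is made the equality $s = \vert \mathfrak S(N,L)\vert$ is immediate from the definition of $s$. The detour through equation (\ref{s}) and the conjugacy-class bookkeeping in your first and third paragraphs is therefore unnecessary (it essentially re-proves Proposition \ref{8fin} rather than the statement at hand), but it does no harm.
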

\begin{proof}
When $\alpha = 0$, $N_p \cong \langle p^2\epsilon_1, p^2\epsilon_2, p^\beta \epsilon_3 \rangle$.  Let $\tau_w$ be a symmetry of $O(N)$ with $w$ primitive in $N$.  By Lemma \ref{sym}, $O(N)$ contains two other symmetries $\tau_u, \tau_v$ such that $N_p = \z_p w \perp \z_pu \perp \z_p v$. Thus $\tau_w$ is special to a lattice $M \in \Lpinv(N)$ if and only if $\ord_p(Q(w)) = \beta$ and $p^{-\beta}Q(w)$ and $\epsilon_3$ are in the same square class.  This proves the proposition when $\alpha = 0$.  The remaining cases can be proved by similar consideration.
\end{proof}

\section{Class numbers}

\begin{defn} \label{label}
Let $K$ be a ternary lattice, and $\sigma_1, \ldots, \sigma_t$ be all the symmetries of $K$ arranged so that $Q_K(\sigma_i) \leq Q_K(\sigma_{i+1})$ for $i = 1, \ldots, t-1$.  The {\em label} of $K$ is defined as
$$\lab(K): = \lb \vert O(K) \vert; Q_K(\sigma_1), \ldots, Q_K(\sigma_t)\rb.$$
\end{defn}

For example, if $K$ has trivial isometry group, then $\lab(K) = \lb 2 \rb$.  For $K_2(a,b)$ when $b > 6a$, its label is $\lb 24; 2a, 2a, 2a, 6a, 6a, 6a, b \rb$.

Let $L$ be a primitive ternary lattice and $N = \Lp(L)$.  We define
$$w = \vert \Lpinv(N) \vert \mbox{ and }  f = \sum_{\sigma \in S(N)} \vert \Lpinv(N)_\sigma \vert.$$
Furthermore, for any positive integer $n$, we let $[w]_n$ be the remainder of $w$ when divided by $n$.  The values of $w$, $f$, and $s$ can be effectively determined by Table I, Table II, and Proposition \ref{svalue} respectively, using only the label of $N$ and a Jordan decomposition of $L_p$.  In below, the order of an isometry $\sigma$ is denoted by $o(\sigma)$.

\begin{thm}\label{class}
Suppose that $\Lp(L) = N$ and $p \neq 3$ when $\vert O(N) \vert = 24$.  Then for each positive divisor $d$ of $\vert O^+(N) \vert$, $h_{2d}(N)$, the number of classes in $\Lpinv(N)$ with isometry group of size $2d$, is determined by the label of $N$ as shown in Table III.
\begin{table} [ht]
\centering
{\tabcolsep=3pt
\begin{tabular}{|c||c|c|c|c|c|c|}
\hline \rule[-2mm]{0mm}{8mm}
 $\vert O(N) \vert$ & $h_2(N)$ & $h_4(N)$ & $h_8(N)$ & $h_{12}(N)$ & $h_{16}(N)$ & $h_{24}(N)$ \\
\hline \rule[-2mm]{0mm}{8mm}
$2$ & $w$ &$0$ & $0$& $0$& $0$& $0$ \\
\hline \rule[-2mm]{0mm}{8mm}
$4$ & $\frac{w-f}{2}$ & f &$0$ &$0$ & $0$& $0$\\
\hline \rule[-2mm]{0mm}{8mm}
$8$ & $\frac{w - f + 2s}{4}$ & $\frac{f - 3s}{2}$ & $s$ &$0$ & $0$&$0$ \\
\hline \rule[-2mm]{0mm}{8mm}
$12$ & $\frac{w - f + 2[w]_3}{6}$ & $\frac{f - 3[w]_3}{3}$ &$0$ & $[w]_3$ &$0$ &$0$ \\
\hline \rule[-2mm]{0mm}{8mm}
$16$ & $\frac{w - f + 2s + 2[w]_2}{8}$ & $\frac{f - 3s - 2[w]_2}{4}$ & $\frac{s - [w]_2}{2}$ &$0$ & $[w]_2$ &$0$ \\
\hline \rule[-2mm]{0mm}{8mm}
$24$ & $\frac{w - f + 2s + 4[w]_3}{12}$ & $\frac{f - 3s - 4[w]_3}{6}$ & $\frac{s - [w]_3}{3}$ & $0$ & $0$ & $[w]_3$\\
\hline
\end{tabular}}
\vskip 0.2cm
\center{\rm Table III}
\end{table}

When $\vert O(N) \vert = 48$, $h_{24}(N) = h_{48}(N) = 0$, $h_{12}(N)$ and $h_{16}(N)$ are determined by (\ref{48}), $h_8(N) = \frac{s - 3h_{16}(N)}{6}$,
\begin{eqnarray*}
h_4(N) & = & \frac{f - 18h_8(N) -12h_{12}(N) - 15h_{16}(N)}{12};\\
h_2(N) & = &  \frac{w - f + 12h_8(N) + 8h_{12}(N) + 12h_{16}(N)}{24}.
\end{eqnarray*}
\end{thm}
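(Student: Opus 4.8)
The plan is to solve, for each possible value of $\vert O(N)\vert$, the linear system consisting of the three ``master equations'' already established in the excerpt, together with the vanishing and bounding facts proved in Section \ref{isometry} and in Propositions \ref{order3}--\ref{svalue}. The three master equations are: the orbit-counting identity $\sum_d{}' \frac{\vert O^+(N)\vert}{d} h_{2d}(N) = w$ (equation (\ref{first}) with $w = \vert\Lpinv(N)\vert$ computable from Table I via Proposition \ref{ani1-1}); the fixed-point sum $\sum_d{}' h_{2d}(N) = \frac{1}{\vert O^+(N)\vert}\sum_{\sigma\in O^+(N)}\vert\Lpinv(N)_\sigma\vert$ from (\ref{h1}); and the symmetry-fixed-point count, which after reorganization says that $f = \sum_{\sigma\in S(N)}\vert\Lpinv(N)_\sigma\vert$ decomposes as a sum over conjugacy classes of symmetries, each contributing $\vert O(N)\vert/\vert Z_{O(N)}(\sigma)\vert$ times $\vert\Lpinv(N)_\sigma\vert$. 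For each fixed group order, $\vert O^+(N)\vert$ and the conjugacy-class structure of symmetries are completely known from Section \ref{isometry}, so these become explicit linear relations among the finitely many unknowns $h_2, h_4, h_8, h_{12}, h_{16}, h_{24}$.

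First I would dispose of the higher-order unknowns. For $\vert O(N)\vert \in \{2,4\}$ there are no elements of order $\geq 3$, so $h_{12}=h_{16}=h_{24}=0$ trivially, and only $h_2,h_4$ survive; the two master equations (orbit count and fixed-point count, which here just says $h_2+h_4 = $ the averaged fixed-point total) pin them down, giving the first two rows. For $\vert O(N)\vert = 8$ I additionally use Proposition \ref{order8}: a lattice $M\in\Lpinv(N)$ has $\vert O(M)\vert$ divisible by $8$ iff it carries a special symmetry, and by Remark \ref{uniquespecial} that special symmetry is unique; counting pairs $(\sigma, M)$ with $\sigma$ special to $M$ one way gives $s$ (by definition of $s$ via Proposition \ref{svalue}) and the other way gives $h_8(N)$ (each such $M$ contributes exactly once). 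Hence $h_8 = s$, and the remaining two master equations solve for $h_4$ and $h_2$. For $\vert O(N)\vert = 12$ or $24$, Proposition \ref{order3} and Corollary \ref{34fin} force $\vert\Lpinv(N)_\sigma\vert \le 1$ for the order-$3$ element, and in fact $h_{12}(N)$ (resp. $h_{24}(N)$) is $0$ or $1$ according to whether there is a lattice with $3\mid\vert O(M)\vert$; the neat observation is that this indicator equals $[w]_3$, because $w = \vert\Lpinv(N)\vert \equiv (\text{number of $O(N)$-fixed points in }\Lpinv(N)) \pmod 3$ when $3\mid\vert O(N)\vert$ — a class-equation argument for the action of a $3$-element. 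A parallel argument with $[w]_2$ handles the order-$4$ contribution when $\vert O(N)\vert = 16$ (using Proposition \ref{order4}), and for $\vert O(N)\vert\in\{16,24\}$ one also invokes $h_8 = s - (\text{correction})$ from the pair-counting of special symmetries, now weighted by the second assertion of Proposition \ref{8fin} distinguishing central from non-central special symmetries.

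For the $\vert O(N)\vert = 48$ case the structure is different because $\Lpinv(N)$ is then in bijection (via (\ref{bijection}) and the $*$-operation) with the corresponding sets for $p\bI, p\bA, p\mathbf J$, and by the displayed formula (\ref{48}) $h_{12}$ and $h_{16}$ are already known explicitly in terms of Legendre symbols; $h_{24} = h_{48} = 0$ since no lattice in $\Lpinv(N)$ can have a group of order $24$ or $48$ (discriminant and local-representation obstructions, as in the proofs of Propositions \ref{order3}(b) and \ref{order4}(b), plus the classification of order-$48$ lattices). Then $h_8$ comes from the special-symmetry pair count (the factor $3h_{16}+6h_8$ on the right of (\ref{s})), and finally $h_4$ and $h_2$ from the two master equations, with the explicit weights $18, 12, 15$ and $12, 8, 12$ being exactly the orbit-size coefficients $\vert O^+(N)\vert/d$ and the reduced coefficients coming from $O^+(N)$-averaging of fixed points when $\vert O^+(N)\vert = 24$. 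The main obstacle is bookkeeping rather than conceptual: one must verify for each group $O(N)$ the exact weight with which each conjugacy class of symmetries enters $f$ (i.e. the centralizer orders), confirm that the $[w]_2$ and $[w]_3$ congruence identities hold (this is the one genuinely non-formal step, requiring the class equation for the cyclic subgroup of order $2$ or $3$ acting on $\Lpinv(N)$ together with the ``at most one fixed lattice'' results), and check that the resulting expressions for $h_2$ and $h_4$ are nonnegative integers — which is automatic once all the defining equations are consistent, but worth a remark. I expect the congruence step $[w]_3 = h_{12}(N)$ (and its order-$4$ analogue) to be where the real content lies; everything else is solving a small triangular linear system.
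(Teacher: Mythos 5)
Your proposal is correct and follows essentially the same route as the paper: set up the linear system from the orbit-count (\ref{first}), the Burnside average (\ref{h1}) reorganized via conjugacy classes of symmetries, and the special-symmetry count (\ref{s}); pin down $h_{12}$, $h_{16}$, $h_{24}$ as $[w]_3$ or $[w]_2$ using Propositions \ref{order3}, \ref{order4} and \ref{8fin}; and solve the remaining triangular system for $h_8$, $h_4$, $h_2$. The only cosmetic difference is that you obtain the congruences $h_{24}=[w]_3$, $h_{16}=[w]_2$ from the class equation of a cyclic subgroup acting on $\Lpinv(N)$, whereas the paper reads them directly modulo $3$ or $2$ off equation (\ref{first}) — these are the same computation.
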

\begin{proof}
We will provide the proofs for the cases $\vert O(N) \vert = 24$ and $\vert O(N) \vert = 48$; the other cases are easier and can be proved by the same argument.

Suppose that $\vert O(N) \vert = 24$.  By Proposition \ref{order3}, we know that $h_{12}(N) = 0$ and $h_{24}(N) \leq 1$.  Also, if $M$ is a ternary lattice whose isometry group has order 24, then $O^+(M)\cong \z_2\oplus D_3$ contains two elements of order 3 and two elements of order 6.  This implies
$$\sum_{o(\sigma) = 3, 6} \vert \Lpinv(N)_\sigma \vert = 4h_{24}(N).$$
Therefore, by (\ref{h1}), (\ref{first}), and (\ref{s}), we have a system of three equations
\begin{eqnarray*}
h_2(N) + h_4(N) + h_8(N) + h_{24}(N) & = & \frac{1}{12}(w + f + 4h_{24}(N))\\
12h_2(N) + 6h_4(N) + 3h_8(N) + h_{24}(N) & = & w\\
3h_8(N) + h_{24}(N) & = & s.
\end{eqnarray*}
The second equation shows that $h_{24}(N) = [w]_3$, and by the third we obtain $h_8(N) = \frac{s - [w]_3}{3}$.  Substituting these back  into the first two equations results in a system of two linear equations in $h_2(N)$ and $h_4(N)$ which has the unique solution as presented in Table III.

Now suppose that $\vert O(N) \vert = 48$.    Again, we may assume that $N = p\bI$.  The values of $h_{12}(N)$ and $h_{16}(N)$ are determined by (\ref{48}); both are either 0 or 1.  By (\ref{s}), we have $s = 6h_8(N) + 3h_{16}(N)$, and hence $h_8(N) = \frac{s - 3h_{16}(N)}{6}$.  Equation (\ref{first}) leads us to the equation
$$h_2(N) + \frac{h_4(N)}{2} = \frac{1}{24}(w - 6h_8(N) - 4h_{12}(N) - 3h_{16}(N)).$$
If $M \in \Lpinv(N)$ has an isometry group of order 12, then $O^+(M)$ contains exactly two isometries  of order 3.  So,
$$\sum_{o(\sigma) = 3} \vert \Lpinv(N)_\sigma \vert = 8h_{12}(N).$$
Similarly, if $\vert O(M) \vert = 16$, then $O^+(M)$ contains exactly two isometries of order 4, which means
$$\sum_{o(\sigma) = 4} \vert \Lpinv(N)_\sigma \vert = 6h_{16}(N).$$
Putting everything in (\ref{h1}) gives us another equation
$$h_2(N) + h_4(N) = \frac{1}{24}(w + f - 24h_8(N) - 16 h_{12}(N) - 18h_{16}(N)).$$
So, now we have two linear equations in the unknowns $h_2(N)$ and $h_4(N)$.  One can check easily that the unique common solution to these two equations is the one in Table III.
\end{proof}

\begin{rmk} \label{remarkI}
Suppose that $N = p\bI$.  It has three symmetries $\sigma$ with $Q_N(\sigma) = p^2$, and six symmetries $\sigma$ with $Q_N(\sigma) = 2p^2$.  Therefore,  $s$ is 0, 3, 6, or 9.  Since $h_{16}(N) = 0$ or 1, we must have
$$h_8(N) = \frac{s - 3h_{16}(N)}{6} = \begin{cases}
0 & \mbox{ when $(s, h_{16}(N)) = (0,0)$ or $(3,1)$},\\
1 & \mbox{ when $(s, h_{16}(N)) = (6,0)$ or $(9,1)$}.
\end{cases}$$
\end{rmk}

\begin{exam}\label{I}
Let $p > 3$ and $L$ be a lattice of discriminant $p^2$ such that $L_p \cong \langle 1, 1, p^2 \rangle$.  Therefore, $\Lp(L) = p\bI$ which has class number one.  Since $\vert O(\bI) \vert = 48$,  it follows from (\ref{48}) that
$$h_{16}(N) = 1 \mbox{ and } h_{12}(N) = \frac{1 + \left(\frac{3}{p} \right)}{2}.$$
The three quantities $s$, $w$, and $f$ are easy to obtain, since we know the symmetries in $O(p\bI)$ well.  There are three symmetries $\sigma$ of $p\bI$ with $Q_{p\bI}(\sigma) = p^2$, and another six symmetries $\tau$ with $Q_{p\bI}(\tau) = 2p^2$.  Therefore,
$$s = 3 + 3\left (1 + \left(\frac{2}{p} \right)\right),$$
and thus
$$h_8(N) = \frac{1 + \left(\frac{2}{p} \right)}{2}.$$
From Table I and Table II, the values of $w$ and $f$ are given by
$$w = \frac{p\left(p + \left(\frac{-1}{p}\right)\right)}{2},$$
and
$$f = 9\left (\frac{p - \left(\frac{-1}{p}\right)}{2} + 1 \right) + 6\left( \left(\frac{-1}{p}\right) - 1\right) \left( \frac{1 - \left(\frac{2}{p}\right)}{2} \right)$$
respectively.  The exact values of $h_2(N)$ and $h_4(N)$ can be determined by Table III.  Adding all the $h_{2d}(N)$ together we have the class number of $L$ as
$$h(L) = \frac{1}{48}\left ( p^2 + p\left(9 + \left(\frac{-1}{p}\right) \right) - 3\left(\frac{-1}{p}\right) + 6\left(\frac{2}{p}\right) - 6\left(\frac{-2}{p}\right) + 8\left(\frac{3}{p}\right) + 32\right).$$
\end{exam}

\begin{exam} \label{example}
Let $L$ be a ternary lattice with the Gram matrix
$$\begin{pmatrix} 2&0&-p\\0&2&-p\\-p&-p&7p^2\end{pmatrix},$$
where $p > 3$.  It is easy to see that $L_p \cong \langle 2, 2, 6p^2\rangle$ and $N  = \Lp(L)$ is the lattice $K_4(p^2, 7p^2)$; in particular, $K_4(1, 7)$ is the primitive lattice $\lp(L)$.  It is known that $h(N) = 1$, and by Section \ref{isometry} the label of $N$ is $\lb 16; 2p^2, 2p^2, 4p^2, 4p^2, 24p^2 \rb$.  To simplify the discussion, let us further assume that $p \equiv 7$ mod 24, which means that 2 is a square in $\z_p$ but 3 and $-1$ are not.  Then $s = 1$, $w = \frac{p(p-1)}{2}$, and $f = 5(\frac{p-1}{2}) + 2$.  As a result, $h_{16}(N) = 1$, $h_8(N) = 0$,
$$h_4(N) = \frac{5p - 11}{8}, \mbox{ and } h_2(N) = \frac{p^2 - 6p + 9}{18}.$$
Adding all these $h_{2d}(N)$ together yields
$$h(L) = \frac{p^2 + 4p + 3}{16}.$$
Similar calculations show that
$$h(L)=\begin{cases}
\displaystyle{\frac{p^2+6p+9}{16}} \quad &\text{if $p \equiv 1,5,13,17 \mod{24}$,}\\
\displaystyle{\frac{p^2+4p+3}{16}} \quad &\text{if $p \equiv 7 \mod{24}$,}\\
\displaystyle{\frac{p^2+6p+11}{16}} \quad &\text{if $p \equiv 11,19 \mod{24},$}\\
\displaystyle{\frac{p^2+4p+19}{16}} \quad &\text{if $p \equiv 23 \mod{24}.$}\\
\end{cases}$$
\end{exam}

\section{Labels of Classes} \label{changeoflabels}

Suppose that $\Lp(L) = N$.  We have seen in Theorem \ref{class} that the class number of $L$ is determined only by the label of every class in $\gen(N)$ and a Jordan decomposition of $L_p$.  In order to apply that theorem successively, we need to show that the labels of all the classes in $\Lpinv(N)$ are also determined by the label of $N$ and the structure of $L_p$.  For each class of lattices in $\Lpinv(N)$, we define its label to be the label of any one of its lattices.  The label of $\Lpinv(N)$ is defined to be the multi-set which contains all the labels of classes in $\Lpinv(N)$.    More generally, for a subset $\mathfrak X$ of $\Lpinv(N)$, we define the label of $\mathfrak X$ to the multi-set containing all the labels of classes of lattices in $\mathfrak X$.

\subsection{Preliminary lemmas}

Let $\sigma$ be a symmetry in $O(N)$, and suppose that $M \in \Lpinv(N)$.  Then $Q_M(\sigma)$ is determined by $Q_N(\sigma)$ as shown in Table IV below; this  follows directly from the definition of special symmetry.  The cases {\bf (1)} to {\bf (8)} are divided as in Table I.
\begin{table} [ht]
\begin{centerline}{\tabcolsep=3pt
\begin{tabular}{|c||c|c|}
\hline \rule[-2mm]{0mm}{8mm}
{}&{\bf (1)} and {\bf (2)}&{\bf (3)} to {\bf (8)} \\
\hline \hline
\rule[-2mm]{0mm}{6mm} $\sigma$ : special to $M$ & $Q_N(\sigma)$ & $\frac{1}{p^2}Q_N(\sigma)$\\
\hline
\rule[-2mm]{0mm}{6mm} $\sigma$ : not special to $M$ & $\frac{1}{p^2}Q_N(\sigma)$ & $Q_N(\sigma)$\\
\hline
\end{tabular}}
\end{centerline}
\vskip 0.2cm
\center{\rm Table IV \quad Value of $Q_M(\sigma)$}
\end{table}

For any positive integer $d > 1$, let $\gH_{2d}$ be the set of lattices $M \in \Lpinv(N)$ whose isometry group have order $2d$.  For each symmetry $\sigma$ in $O(N)$, let $\gH_{2d}(\sigma)$ be the set $\gH_{2d} \cap \Lpinv(N)_\sigma$, that is, the set containing all lattices $M \in \Lpinv(N)$ such that $\sigma \in O(M)$ and $\vert O(M)\vert = 2d$. Clearly,
\begin{equation} \label{union}
\gH_{2d} = \bigcup_{\sigma \in S(N)} \gH_{2d}(\sigma).
\end{equation}
The number of classes in $\gH_{2d}(\sigma)$ is denoted by $h_{2d}(\sigma)$.  In general, (\ref{union}) may not be a disjoint union.

\begin{lem}\label{conjugacy}
Let $M$ and $M'$ be two lattices in $\Lpinv(N)$.  If $M \cong M'$, then $O(M)$ and $O(M')$ are conjugate inside $O(N)$.
\end{lem}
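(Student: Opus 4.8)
The plan is to promote an abstract isometry $M \to M'$ to an honest isometry of the ambient quadratic space, note that it must stabilize $N$, and then read off the conjugacy of the orthogonal groups.

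First I would use that $M$ and $M'$, being members of $\gen(L)$, are lattices on one and the same quadratic space $V$. Any isometry $M \to M'$ is then a quadratic-form-preserving linear map between two full-rank lattices on $V$, so it extends uniquely to an element $\phi \in O(V)$ with $\phi(M) = M'$.

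Next I would apply Lemma \ref{conf}: since $\phi \in O(V)$, we have $\phi(\Lp(M)) = \Lp(\phi(M)) = \Lp(M')$. But $M, M' \in \Lpinv(N)$ means precisely $\Lp(M) = \Lp(M') = N$, so $\phi(N) = N$; that is, $\phi \in O(N)$.

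Finally, for any $\psi \in O(V)$ and any lattice $P$ on $V$ one has $O(\psi(P)) = \psi\, O(P)\, \psi^{-1}$, by directly unwinding the definition of the orthogonal group. Taking $P = M$ and $\psi = \phi$ gives $O(M') = \phi\, O(M)\, \phi^{-1}$ with $\phi \in O(N)$, which is exactly the assertion. There is essentially no obstacle here: the only points needing a word are the extension of the isometry to $V$ (immediate, as $M$ spans $V$) and the elementary identity $O(\psi(P)) = \psi O(P)\psi^{-1}$; all the real content is the compatibility of $\Lp$ with isometries already recorded in Lemma \ref{conf}.
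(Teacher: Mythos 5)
Your proof is correct and follows the same route as the paper: extend the isometry $M \to M'$ to $\phi \in O(V)$, use Lemma \ref{conf} to see that $\phi$ preserves $N$, and conclude $O(M') = \phi\, O(M)\, \phi^{-1}$. The paper's proof is just a more terse version of exactly this argument.
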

\begin{proof}
Suppose that $\phi: M \longrightarrow M'$ is an isometry.  Then $\phi$ is necessarily an isometry of $N$, by Lemma \ref{conf}.  In that case, $O(M') = \phi O(M) \phi^{-1}$. This proves the lemma.
\end{proof}

\begin{lem} \label{conjugacy2}
Let $\sigma$ and $\sigma'$ be two symmetries of $N$.
\begin{enumerate}
\item[(a)] If $\phi \sigma \phi^{-1} = \sigma'$ for some $\phi \in O(N)$, then for all $d$, $\phi$ induces a bijection from $\gH_{2d}(\sigma)$ onto $\gH_{2d}(\sigma')$.  Consequently, the classes in $\gH_{2d}(\sigma)$ coincide with the classes in $\gH_{2d}(\sigma')$.

\item[(b)] If there is an isometry from a lattice $M \in \gH_4(\sigma)$ to another lattice $M' \in \gH_4(\sigma')$, then $\sigma$ and $\sigma'$ are conjugate in $O(N)$.
\end{enumerate}
\end{lem}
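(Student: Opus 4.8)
The two parts have rather different flavors, so I would treat them separately. For part (a), the natural strategy is to push everything through the conjugation by $\phi$ and invoke Lemma \ref{conf}. Given $M \in \gH_{2d}(\sigma)$, I would consider $\phi(M)$: since $\phi \in O(N)$, Lemma \ref{conf} (applied in the form of Corollary \ref{con2}, or directly) shows $\phi(M) \in \Lpinv(N)$, and $|O(\phi(M))| = |\phi O(M)\phi^{-1}| = |O(M)| = 2d$, so $\phi(M) \in \gH_{2d}$. Moreover $\sigma \in O(M)$ forces $\phi\sigma\phi^{-1} = \sigma' \in O(\phi(M))$, so $\phi(M) \in \gH_{2d}(\sigma')$. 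The map $M \mapsto \phi(M)$ is a bijection $\gH_{2d}(\sigma) \to \gH_{2d}(\sigma')$ with inverse $M' \mapsto \phi^{-1}(M')$ (here one checks $\phi^{-1}\sigma'\phi = \sigma$). Finally, since $\phi(M) \cong M$, the map descends to an identification on the level of isometry classes, so the classes represented in $\gH_{2d}(\sigma)$ are exactly those represented in $\gH_{2d}(\sigma')$. This part is essentially bookkeeping and I expect no real obstacle.

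For part (b), the hypothesis is weaker: we are only told an isometry exists between some $M \in \gH_4(\sigma)$ and some $M' \in \gH_4(\sigma')$, and we must conclude $\sigma$ and $\sigma'$ are conjugate \emph{in $O(N)$}. Let $\psi : M \to M'$ be such an isometry. By Lemma \ref{conf}, $\psi \in O(N)$. Then $O(M') = \psi O(M)\psi^{-1}$. Now the key point: $M$ and $M'$ have isometry groups of order $4$. An order-$4$ isometry group of a ternary lattice, from the analysis in Section \ref{isometry}, is generated by $-I$ together with a single symmetry — indeed, in that case $O(M)$ is elementary abelian of order $4$ (it contains $-I$ and exactly one nontrivial pair $\pm$ of a symmetry, since two distinct symmetries would force orthogonality and push the order to $8$). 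So $O(M) = \{\pm I, \pm \tau\}$ where $\tau$ is the \emph{unique} symmetry in $O(M)$; since $\sigma \in O(M)$ is a symmetry, we must have $\sigma = \tau$. Likewise $\sigma'$ is the unique symmetry in $O(M')$. Therefore $\psi \sigma \psi^{-1}$ is the unique symmetry of $\psi O(M)\psi^{-1} = O(M')$, which is $\sigma'$; hence $\psi\sigma\psi^{-1} = \sigma'$ with $\psi \in O(N)$, as required.

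The only step that requires care — and the one I would flag as the main (minor) obstacle — is the claim that a ternary lattice with isometry group of order $4$ has a \emph{unique} nontrivial symmetry. This needs the structural dichotomy already established: if every element of $O(M)$ has order $\le 2$ then $O(M)$ is elementary $2$-group generated by $-I$ and symmetries, distinct symmetries correspond to orthogonal vectors, and three mutually orthogonal symmetries would give order $8$; so order $4$ leaves room for exactly one symmetry pair $\pm\tau$. If instead $O(M)$ had an element of order $3$ or $4$ it would have order divisible by $3$ or $\ge 16$ by the results of Section \ref{isometry}, contradicting $|O(M)| = 4$. Hence the order-$4$ case is exactly $\{\pm I, \pm\tau\}$. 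Once this uniqueness is in hand, part (b) follows immediately by transporting $\sigma$ via $\psi$ and matching it against the unique symmetry $\sigma'$ of $O(M')$. I would also remark that part (b) is the $|O|=4$ analogue of why the finer conjugacy information fails to propagate for larger isometry groups, where multiple symmetries can coexist.
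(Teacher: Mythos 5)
Your proposal is correct and follows essentially the same route as the paper: part (a) by transporting lattices via $\phi$ and noting the map descends to isometry classes, and part (b) by observing that the isometry $\psi$ lies in $O(N)$ (Lemma \ref{conf}) and conjugates the unique symmetry of $O(M)$ to the unique symmetry of $O(M')$. The only difference is that you spend extra effort justifying uniqueness of the symmetry in an order-$4$ isometry group, which is actually immediate here: since $M \in \gH_4(\sigma)$ gives $\{\pm I, \pm\sigma\} \subseteq O(M)$ and $\vert O(M)\vert = 4$, equality holds and $\sigma$ is the only element of determinant $-1$ with the eigenvalue pattern of a symmetry.
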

\begin{proof}
For part (a), it is obvious that $M \longmapsto \phi(M)$ is an injective function from $\gH_{2d}(\sigma)$ to $\gH_{2d}(\sigma')$.  It has an inverse, which is induced by $\phi^{-1}$.  For the other assertion, note that if $M_1, M_2 \in \gH_{2d}(\sigma)$ and $\psi: M_1 \longrightarrow M_2$ is an isometry, then $\psi \phi \psi^{-1}$ is an isometry from $\phi(M_1)$ to $\phi(M_2)$.

For part (b), suppose that $\phi: M\longrightarrow M'$ is an isometry.  Then $\phi$ is an isometry on $O(N)$, by virtue of Lemma \ref{conf}.  It is straightforward to see that $\phi \sigma \phi^{-1}$ is a symmetry in $O(M')$.  But since $M' \in \gH_4(\sigma')$, it has one and only one symmetry, namely $\sigma'$.  Thus $\sigma' = \phi \sigma \phi^{-1}$.
\end{proof}

\begin{cor}\label{conjugacyh4}
Let $C$ be a complete set of representatives of conjugacy classes of symmetries of $O(N)$.  Then the label of $\gH_4$ is the (multi-set) union of the labels of $\gH_4(\sigma)$, for all $\sigma \in C$.
\end{cor}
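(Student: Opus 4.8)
The plan is to deduce Corollary \ref{conjugacyh4} directly from the decomposition (\ref{union}) for $d=2$ together with the two parts of Lemma \ref{conjugacy2}. Recall that by (\ref{union}) we have $\gH_4 = \bigcup_{\sigma \in S(N)} \gH_4(\sigma)$, so every class of lattices with isometry group of order $4$ lies in $\gH_4(\sigma)$ for at least one symmetry $\sigma$ of $N$. The first step is to argue that in fact it lies in $\gH_4(\sigma)$ for essentially exactly one \emph{conjugacy class} of symmetries: if $M \in \gH_4(\sigma)$ then, since $\vert O(M) \vert = 4$ and $O(M) = \{\pm I, \pm \sigma\}$, the lattice $M$ has $\sigma$ as its unique symmetry, so $M \in \gH_4(\sigma')$ forces $\sigma' = \sigma$. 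Hence the sets $\gH_4(\sigma)$, as $\sigma$ ranges over $S(N)$, are pairwise disjoint.

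Next I would pass from individual symmetries to conjugacy classes. By Lemma \ref{conjugacy2}(a), if $\sigma$ and $\sigma'$ are conjugate in $O(N)$ then $\gH_4(\sigma)$ and $\gH_4(\sigma')$ have the same classes, and in particular the same label; so grouping the union (\ref{union}) by conjugacy classes of symmetries, and picking one representative $\sigma$ from each class as in the set $C$, we get that the label of $\gH_4$ is (at most) the multi-set union of the labels of $\gH_4(\sigma)$ for $\sigma \in C$. The remaining point is that no class of lattices is counted twice across different representatives in $C$: this is exactly Lemma \ref{conjugacy2}(b), which says that if a lattice $M \in \gH_4(\sigma)$ is isometric to a lattice $M' \in \gH_4(\sigma')$ then $\sigma$ and $\sigma'$ are conjugate in $O(N)$, hence lie over the same representative in $C$. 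Combining disjointness within a conjugacy class (first step) with disjointness across distinct conjugacy classes (Lemma \ref{conjugacy2}(b)) gives that every class in $\gH_4$ is accounted for exactly once, so the multi-set union is precisely the label of $\gH_4$.

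I do not anticipate a serious obstacle here: the corollary is a bookkeeping consequence of results already established, and the only thing to be careful about is the distinction between the union of \emph{sets of lattices} $\gH_4(\sigma)$ (which need not be disjoint as subsets of $\Lpinv(N)$, since one lattice can appear for conjugate symmetries) and the union of \emph{multi-sets of labels of classes}, where each class must be counted once. The mild subtlety is that Lemma \ref{conjugacy2}(a) tells us that within a single conjugacy class the various $\gH_4(\sigma)$ contribute the \emph{same} classes, so choosing a single representative $\sigma \in C$ already captures all of them without repetition, and Lemma \ref{conjugacy2}(b) guarantees no overlap between distinct representatives. Writing this cleanly is the whole content of the proof.

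\begin{proof}
By (\ref{union}) with $d = 2$, $\gH_4 = \bigcup_{\sigma \in S(N)} \gH_4(\sigma)$, so the label of $\gH_4$ is the multi-set union of the labels of the classes in $\gH_4(\sigma)$, provided we count each class exactly once.  Fix a class in $\gH_4$, represented by a lattice $M$ with $\vert O(M) \vert = 4$.  Then $O(M) = \{\pm I, \pm \sigma_M\}$ for a unique symmetry $\sigma_M$ of $M$, and $\sigma_M \in O(N)$ by Corollary \ref{con2}.  Thus $M \in \gH_4(\sigma)$ if and only if $\sigma = \sigma_M$, so the sets $\gH_4(\sigma)$, $\sigma \in S(N)$, are pairwise disjoint as subsets of $\Lpinv(N)$.

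Now group the symmetries of $N$ into their conjugacy classes under $O(N)$ and choose the representatives in $C$.  If $\sigma$ and $\sigma'$ are conjugate in $O(N)$, then by Lemma \ref{conjugacy2}(a) the classes in $\gH_4(\sigma)$ coincide with the classes in $\gH_4(\sigma')$; hence the label of $\gH_4(\sigma)$ depends only on the conjugacy class of $\sigma$, and picking the single representative $\sigma \in C$ already records all these classes without repetition.  On the other hand, if $M \in \gH_4(\sigma)$ and $M' \in \gH_4(\sigma')$ with $\sigma, \sigma' \in C$ are isometric, then Lemma \ref{conjugacy2}(b) shows $\sigma$ and $\sigma'$ are conjugate in $O(N)$, whence $\sigma = \sigma'$.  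Therefore distinct representatives in $C$ contribute disjoint families of classes.  Putting these together, every class in $\gH_4$ is accounted for exactly once as $\sigma$ runs over $C$, so the label of $\gH_4$ is the multi-set union of the labels of $\gH_4(\sigma)$ for $\sigma \in C$.
\end{proof}
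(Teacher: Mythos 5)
Your proof is correct, and it is exactly the derivation the paper intends: the corollary is stated without proof as an immediate consequence of Lemma \ref{conjugacy2}, with part (a) giving that conjugate symmetries contribute the same classes and part (b) ruling out overlap between non-conjugate representatives, plus the observation that a lattice in $\gH_4$ has a unique symmetry. (The only blemish is in your informal plan, where the parenthetical claim that the sets $\gH_4(\sigma)$ ``need not be disjoint'' contradicts the disjointness you correctly establish two sentences earlier; the final proof itself is consistent and complete.)
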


\begin{lem} \label{conjugacy16h8}
Suppose that $\vert O(N) \vert = 16$.  Let $\tau$ be the symmetry in the center of $O(N)$, and $\sigma$ and $\sigma'$ be two symmetries of $N$, which are not $\tau$ and are not conjugate in $O(N)$.  Then the label of $\gH_8$ is the disjoint union of the labels of $\gH_8(\sigma)$ and $\gH_8(\sigma')$, and it is also equal to the label of $\gH_8(\tau)$.
\end{lem}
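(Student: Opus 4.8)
The plan is to exploit the explicit description of the symmetries and of the orthogonal systems of $K_3(a,b)$ and $K_4(a,b)$ recorded in Section~\ref{isometry}. Since $\vert O(N)\vert=16$, the lattice $N$ is isometric to some $K_3(a,b)$ or $K_4(a,b)$, and hence has exactly five symmetries: the central one $\tau$, together with two conjugacy classes $C_{1}$ and $C_{2}$, each of size two. Moreover $N$ has exactly two orthogonal systems $\mathcal O_{1}$ and $\mathcal O_{2}$, and reading off the lists in Section~\ref{isometry} one checks that $C_{i}\subseteq\mathcal O_{i}$ and $\tau\in\mathcal O_{1}\cap\mathcal O_{2}$ (in fact $\mathcal O_{i}=C_{i}\cup\{\tau\}$). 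Since $\sigma$ and $\sigma'$ are non-central and non-conjugate, after interchanging them we may assume $\sigma\in C_{1}$ and $\sigma'\in C_{2}$.

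First I would show that $\gH_{8}=\gH_{8}(\tau)$ as sets of lattices, which immediately gives the last assertion. The inclusion $\gH_{8}(\tau)\subseteq\gH_{8}$ is definitional. Conversely, let $M\in\gH_{8}$. By Section~\ref{isometry}, $O(M)\cong\z_{2}\oplus\z_{2}\oplus\z_{2}$ contains exactly three symmetries $\tau_{w},\tau_{u},\tau_{v}$ with $w,u,v$ mutually orthogonal. By Corollary~\ref{con2} these lie in $O(N)$, so they are symmetries of $N$; being mutually orthogonal they pairwise commute, so $\{\tau_{w},\tau_{u},\tau_{v}\}$ is an orthogonal system of $N$, whence it equals $\mathcal O_{1}$ or $\mathcal O_{2}$. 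In either case $\tau\in O(M)$, so $M\in\gH_{8}(\tau)$.

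Next I would prove that the isometry classes meeting $\gH_{8}$ are partitioned into those meeting $\gH_{8}(\sigma)$ and those meeting $\gH_{8}(\sigma')$. For the covering part, take $M\in\gH_{8}$; by the previous paragraph its set of symmetries $S(M)$ equals $\mathcal O_{1}$ or $\mathcal O_{2}$, hence contains $\sigma$ (if $S(M)=\mathcal O_{1}$) or $\sigma'$ (if $S(M)=\mathcal O_{2}$), i.e. $M\in\gH_{8}(\sigma)$ or $M\in\gH_{8}(\sigma')$. For disjointness, suppose a single class had representatives $M_{1}\in\gH_{8}(\sigma)$ and $M_{2}\in\gH_{8}(\sigma')$, say with an isometry $\phi\colon M_{1}\to M_{2}$. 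By Lemma~\ref{conf}, $\phi\in O(N)$, so $\phi$ conjugates $O(M_{1})$ onto $O(M_{2})$ and therefore $S(M_{1})$ onto $S(M_{2})$. Since $\sigma\in S(M_{1})$ and $\sigma$ lies in $\mathcal O_{1}$ but not $\mathcal O_{2}$, we get $S(M_{1})=\mathcal O_{1}$, and likewise $S(M_{2})=\mathcal O_{2}$, so $\phi\mathcal O_{1}\phi^{-1}=\mathcal O_{2}$. But conjugation by $\phi\in O(N)$ fixes every conjugacy class of $O(N)$ setwise, whereas $\mathcal O_{1}$ contains the two symmetries of $C_{1}$ while $\mathcal O_{2}$ contains the two symmetries of $C_{2}$, and $C_{1}\cap C_{2}=\emptyset$ --- a contradiction. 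Hence no class meets both $\gH_{8}(\sigma)$ and $\gH_{8}(\sigma')$, and since both sets are contained in $\gH_{8}$ we conclude that the label of $\gH_{8}$ is the disjoint multi-set union of the labels of $\gH_{8}(\sigma)$ and $\gH_{8}(\sigma')$.

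The only step that is not routine bookkeeping is the identification, for $M\in\gH_{8}$, of the three mutually commuting symmetries of $M$ with one of the two orthogonal systems of $N$; once that is in hand, both the covering statement and $\gH_{8}=\gH_{8}(\tau)$ are immediate, and the disjointness reduces to the observation that an isometry between two such lattices lies in $O(N)$ (Lemma~\ref{conf}) and hence cannot fuse the two conjugacy classes $C_{1}$ and $C_{2}$. One should also keep in mind that ``the label of a subset $\mathfrak X$'' is by definition the multi-set of labels of the isometry classes having a representative in $\mathfrak X$, so that the set-partition of these classes established above is exactly what is needed for the claimed equality of multi-sets.
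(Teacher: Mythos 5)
Your proof is correct and follows essentially the same route as the paper's: both rest on the facts that $N$ has exactly two orthogonal systems, that $S(M)$ for $M\in\gH_8$ must be one of them, that $\sigma$ and $\sigma'$ lie in different ones while $\tau$ lies in both. Your treatment is somewhat more careful than the paper's terse argument in checking that no single isometry class can meet both $\gH_8(\sigma)$ and $\gH_8(\sigma')$ (via Lemma~\ref{conf} and the fact that conjugation in $O(N)$ cannot fuse the two non-central conjugacy classes), but this is an elaboration of, not a departure from, the same idea.
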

\begin{proof}
Since $\vert O(N) \vert = 16$, $N$ has exactly two orthogonal systems; see Section \ref{isometry}.  If $M$ is a lattice in $\gH_8$, then $S(M)$ is one of the orthogonal systems.  Since $\sigma$ and $\sigma'$ are not conjugate in $O(N)$, they belong to different orthogonal systems (see Section \ref{isometry}), and hence the label of $\gH_8$ is the disjoint union of the labels of $\gH_8(\sigma)$ and  $\gH_8(\sigma')$.

The last assertion is clear since $\tau$ is in both orthogonal systems, and hence $\gH_8$ is just $\gH_8(\tau)$.
\end{proof}

\begin{lem}\label{conjugacy24h8}
Suppose that $\vert O(N) \vert = 24$.  Then the label of $\gH_8$ is equal to the label of $\gH_8(\sigma)$ for any $\sigma \in S(N)$.
\end{lem}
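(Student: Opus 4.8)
Since $\vert O(N)\vert = 24$, the lattice $N$ has the same orthogonal group and the same symmetries (as reflections) as the primitive lattice $\lp(L)$, which by Section \ref{isometry} is isometric to $K_2(a,b)$ for a pair of relatively prime positive integers $a,b$. So, transporting a basis, we may assume $N$ carries vectors $x_1,x_2,x_3$ for which
$$S(N)=\{\tau_{x_1},\tau_{x_2},\tau_{x_1+x_2}\}\cup\{\tau_{x_1+2x_2},\tau_{2x_1+x_2},\tau_{x_1-x_2}\}\cup\{\tau_{x_3}\},$$
the three parts being the $O(N)$-conjugacy classes of symmetries, and the three orthogonal systems of $N$ are
$$\{\tau_{x_1},\tau_{x_1+2x_2},\tau_{x_3}\},\qquad\{\tau_{x_2},\tau_{2x_1+x_2},\tau_{x_3}\},\qquad\{\tau_{x_1+x_2},\tau_{x_1-x_2},\tau_{x_3}\}.$$
The plan is to prove that for every $\sigma\in S(N)$ the set of isometry classes meeting $\gH_8(\sigma)$ equals the set of classes meeting $\gH_8$; since the label of $\gH_8(\sigma)$ (resp.\ of $\gH_8$) is by definition the multi-set of labels of exactly those classes, this gives the claim. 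The inclusion $\gH_8(\sigma)\subseteq\gH_8$ handles one direction, so the real point is that every class in $\gH_8$ already meets $\gH_8(\sigma)$.

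First, by Lemma \ref{conjugacy2}(a) the classes meeting $\gH_8(\sigma)$ depend only on the $O(N)$-conjugacy class of $\sigma$, so it suffices to treat one representative of each of the three conjugacy classes above. For the central symmetry $\tau_{x_3}$ the matter is immediate: any $M\in\gH_8$ has $\vert O(M)\vert=8$, so since $O(M)\subseteq O(N)$ (Corollary \ref{con2}), Section \ref{isometry} tells us $S(M)$ is an orthogonal system of $N$; as every orthogonal system of $N$ contains $\tau_{x_3}$, we get $\tau_{x_3}\in O(M)$ for all $M\in\gH_8$, hence $\gH_8(\tau_{x_3})=\gH_8$ and the two labels are literally equal.

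Now take $\sigma$ to be $\tau_{x_1}$ (a representative of the first class) or $\tau_{x_1+2x_2}$ (a representative of the second), and let $M\in\gH_8$, so that $S(M)$ is one of the three orthogonal systems above. An inspection of that list shows each of the three systems contains a symmetry $\sigma'$ that is $O(N)$-conjugate to $\sigma$, since $\tau_{x_1},\tau_{x_2},\tau_{x_1+x_2}$ (and, in the other case, $\tau_{x_1+2x_2},\tau_{2x_1+x_2},\tau_{x_1-x_2}$) occur one apiece among the three systems. Thus $\sigma'\in O(M)$, i.e.\ $M\in\gH_8(\sigma')$, and by Lemma \ref{conjugacy2}(a) the class $[M]$ also meets $\gH_8(\sigma)$; hence the classes meeting $\gH_8(\sigma)$ are precisely those meeting $\gH_8$, and the labels coincide. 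I do not anticipate any genuine difficulty here: the argument is pure bookkeeping with the symmetry structure of $K_2(a,b)$ recorded in Section \ref{isometry}. The one point worth stating carefully is that $\gH_8(\sigma)$ is in general not closed under isometry inside $\Lpinv(N)$, so ``label of $\gH_8(\sigma)$'' must be read as the labels of the classes $\gH_8(\sigma)$ meets — which is exactly why Lemma \ref{conjugacy2}(a), rather than a naive set identity, is invoked when passing between conjugate symmetries.
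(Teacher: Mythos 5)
Your proof is correct and follows essentially the same route as the paper's: both arguments rest on the explicit description of the three conjugacy classes of symmetries and the three orthogonal systems of $K_2(a,b)$ recorded in Section \ref{isometry}, together with the conjugation bijection of Lemma \ref{conjugacy2}(a). The paper phrases the non-central case by observing that the three orthogonal systems are mutually conjugate and that $\gH_8(\sigma)$ equals $\gH_8(\mathcal O)$ for the unique system $\mathcal O$ containing $\sigma$, which is the same bookkeeping you carry out.
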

\begin{proof}
From Section \ref{isometry}, $S(N)$ is decomposed into three conjugacy classes.  There are three orthogonal systems in $O(N)$, each of them is of the form $\{\sigma, \sigma', \tau\}$, where $\sigma$ and $\sigma'$ belong to different conjugacy classes in $O(N)$ and $\tau$ is the unique symmetry in the center of $O(N)$.  Since each symmetry other than $\tau$ is contained in one and only one orthogonal system, all three orthogonal systems of $O(N)$ are conjugate.  Therefore, if $\mathcal O$ is an orthogonal system and $\gH_8(\mathcal O)$ denote the set of lattices $M \in \gH_8$ such that $S(M) = \mathcal O$, then the label of $\gH_8$ is equal to the label of $\gH_8(\mathcal O)$.

The symmetry $\tau$ is in all three orthogonal systems.  Therefore, $\gH_8$ is just $\gH_8(\tau)$, and hence their labels are the same.  Now, let $\sigma \in S(N)$ which is not $\tau$, and $\mathcal O$ be the unique orthogonal system containing $\sigma$.  Then $\gH_8(\sigma)$ must be equal to $\gH_8(\mathcal O)$, and so the label of $\gH_8(\sigma)$ is just the label of $\gH_8$.
\end{proof}

For any $\sigma \in S(N)$ and any $M \in \Lpinv(N)_\sigma$, let
$$G_M(\sigma) = \{\phi \in O(N) : \sigma \in O(\phi(M)) \}.$$
Although $G_M(\sigma)$ is not necessarily a subgroup of $O(N)$, it contains the normalizer of $O(M)$.  Moreover, the size of the coset space $G_M(\sigma)/O(M)$, denoted $g_M(\sigma)$, is the number of lattices in $\text{cls}(M) \cap \Lpinv(N)_\sigma$.  If $g_M(\sigma)$ is the same for every $M$ in $\gH_{2d}(\sigma)$, then $\vert \gH_{2d}(\sigma) \vert = g_{M}(\sigma)h_{2d}(\sigma)$.

\begin{lem}\label{h4}
Let $\sigma \in S(N)$ and $M$ be a lattice in $\gH_4(\sigma)$.  Then $g_M(\sigma)$ is the group index $[C(\sigma) : \{\pm I, \pm \sigma\}]$, where $C(\sigma)$ is the centralizer of $\sigma$ in $O(N)$.
\end{lem}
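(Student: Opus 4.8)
The plan is to identify $G_M(\sigma)$ with the centralizer $C(\sigma)$ of $\sigma$ in $O(N)$; the lemma then follows at once. First I record the shape of $O(M)$. Since $M \in \gH_4(\sigma)$, $\vert O(M)\vert = 4$, and by the discussion in Section~\ref{isometry} a ternary lattice with isometry group of order $4$ has no isometry of order $3$ or $4$; hence $O(M)$ is an elementary abelian $2$-group generated by $-I$ and its symmetries. Two distinct symmetries of $O(M)$ would be reflections in mutually orthogonal vectors and, together with $-I$, would generate a group of order $8$; so $O(M)$ has exactly one symmetry, necessarily $\sigma$, and $O(M) = \{\pm I, \pm\sigma\}$. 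Granting $G_M(\sigma) = C(\sigma)$, and observing that $\{\pm I, \pm\sigma\} \subseteq C(\sigma)$ (every element of it commutes with $\sigma$), we get $g_M(\sigma) = \vert G_M(\sigma)/O(M)\vert = [C(\sigma):\{\pm I, \pm\sigma\}]$, as claimed.

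It remains to prove $G_M(\sigma) = C(\sigma)$. If $\phi \in C(\sigma)$, then $\sigma(\phi(M)) = (\phi\sigma\phi^{-1})(\phi(M)) = \phi(\sigma(M)) = \phi(M)$ since $\sigma \in O(M)$; thus $\sigma \in O(\phi(M))$ and $\phi \in G_M(\sigma)$. Conversely, suppose $\sigma \in O(\phi(M))$. Because $O(M) = \{\pm I, \pm\sigma\}$, we have $O(\phi(M)) = \phi\,O(M)\,\phi^{-1} = \{\pm I, \pm\phi\sigma\phi^{-1}\}$, so $\sigma$ is one of $\pm I$, $\pm\phi\sigma\phi^{-1}$. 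It is not $\pm I$, as $\sigma$ is a symmetry; and it is not $-\phi\sigma\phi^{-1}$, because $\phi\sigma\phi^{-1}$ is again a symmetry, i.e.\ a hyperplane reflection of determinant $-1$, so $-\phi\sigma\phi^{-1}$ has determinant $+1$ and is not a symmetry. Hence $\sigma = \phi\sigma\phi^{-1}$, i.e.\ $\phi \in C(\sigma)$.

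The argument is short; the only point that needs care is the elimination of the possibility $\phi\sigma\phi^{-1} = -\sigma$ in the last step, which is exactly the determinant remark above, together with the structural fact (borrowed from Section~\ref{isometry}) that a ternary isometry group of order $4$ carries a unique symmetry. With those in hand no computation is required.
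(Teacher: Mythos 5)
Your proof is correct and follows essentially the same route as the paper's: identify $O(M)$ as $\{\pm I,\pm\sigma\}$ with $\sigma$ its unique symmetry, and then show $G_M(\sigma)=C(\sigma)$ by noting that $\phi\sigma\phi^{-1}$ is again a symmetry and hence must coincide with $\sigma$ itself. The extra determinant remark ruling out $\sigma=-\phi\sigma\phi^{-1}$ is a correct elaboration of a step the paper leaves implicit.
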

\begin{proof}
Clearly, $O(M)$ is $\{\pm I, \pm \sigma\}$ which contains a unique symmetry, namely $\sigma$.  For any $\phi \in O(N)$, $\phi^{-1}\sigma\phi$ is also a symmetry.  Therefore, $\phi \in G_M(\sigma)$ if and only if $\phi^{-1}\sigma \phi = \sigma$, that is $\phi \in C(\sigma)$.
\end{proof}

\subsection{Main Theorem}

\begin{thm} \label{labelchange}
Suppose that $\Lp(L) = N$.  Then the label of $\Lpinv(N)$ can be computed effectively by using the label of $N$ and a Jordan decomposition of $L_p$.
\end{thm}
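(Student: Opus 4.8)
\emph{Plan of proof.} The starting point is that for every $M\in\Lpinv(N)$ we have $\Lp(M)=N$, so by Corollary~\ref{con2} applied to $M$ the group $O(M)$ embeds in $O(N)$, and a symmetry of $M$, regarded as an element of $O(V)$, is a reflection lying in $O(N)$; hence $S(M)\subseteq S(N)$ and each $\sigma\in S(M)$ has a well-defined value $Q_N(\sigma)$ (computed from a primitive vector of $N$, not of $M$). Consequently $\lab(M)$ is determined once we know $\vert O(M)\vert$, the multiset $\{Q_N(\sigma):\sigma\in S(M)\}$, and, for each $\sigma\in S(M)$, whether $\sigma$ is special to $M$: Table~IV then converts each $Q_N(\sigma)$ into the corresponding $Q_M(\sigma)$, the relevant case (1)--(8) being read off from the given Jordan decomposition of $L_p$. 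Moreover, by Remark~\ref{uniquespecial} together with Propositions~\ref{order8} and \ref{order12}, $M$ has at most one special symmetry, and — when $\vert O(N)\vert$ is divisible by $8$ — it has exactly one precisely when $8\mid\vert O(M)\vert$.

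The multiplicities $h_{2d}(N)$ are already available: Theorem~\ref{class} expresses them effectively in terms of $\lab(N)$ and a Jordan decomposition of $L_p$, via the quantities $w$, $f$, $s$ supplied by Tables~I and II and Proposition~\ref{svalue}. Thus the theorem amounts to refining, for each $d\ge 2$, the integer $h_{2d}(N)$ into the multiset of labels of the classes making up $\gH_{2d}$, after which $\Lpinv(N)$ contributes in addition $h_2(N)$ copies of the label $\lb 2\rb$. I would carry this out by running through the finitely many possibilities $\vert O(N)\vert\in\{2,4,8,12,16,24,48\}$ allowed by Minkowski's bound and the analysis of Section~\ref{isometry}, and, within each, through the possible values of $\vert O(M)\vert$.

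The organizing tools are the lemmas of this section. For $\gH_4$, Corollary~\ref{conjugacyh4} reduces the computation to the pieces $\gH_4(\sigma)$, one for each conjugacy class of symmetries of $N$; every class in $\gH_4(\sigma)$ has the one-entry label $\lb 4;Q_M(\sigma)\rb$, with $Q_M(\sigma)$ obtained from $Q_N(\sigma)$ by Table~IV, the only subtlety being that at most one such class can have $\sigma$ special to it (Proposition~\ref{specialone}) — a possibility registered by the boldface entries of Table~II — and the number of classes in $\gH_4(\sigma)$ follows from Table~II and the index $g_M(\sigma)=[C(\sigma):\{\pm I,\pm\sigma\}]$ of Lemma~\ref{h4} after subtracting the contributions of the larger isometry groups. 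For $\gH_8$ one uses instead Lemmas~\ref{conjugacy16h8} (when $\vert O(N)\vert=16$) and \ref{conjugacy24h8} (when $\vert O(N)\vert=24$), together with the fact that $S(M)$ must be one of the orthogonal systems of $N$ listed explicitly in Section~\ref{isometry}: the three $Q_N$-values of that system are then known, Proposition~\ref{svalue} and the set $\mathfrak S(N,L)$ single out which of the three is special to $M$, and Table~IV completes the label. Finally, the classes with $\vert O(M)\vert\ge 12$, and the whole case $\vert O(N)\vert=48$ (which, via the bijections~(\ref{bijection}) and the map $*$, is reduced to $N=p\bI$), were already pinned down in Propositions~\ref{order3}, \ref{order4}, and \ref{8fin}: there the relevant lattices are identified as explicit lattices $K_1(a,b)$ or $K_3(a,b)$, whose labels are immediate, and Proposition~\ref{order12} rules out a special symmetry when $\vert O(M)\vert=12$.

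The main obstacle is not conceptual but organizational, and is concentrated in the cases $\vert O(N)\vert=16,24,48$, where $N$ has several non-conjugate symmetries and the preimages $M$ come in several isometry types. One must verify that the decomposition of each $\gH_{2d}$ into conjugacy-class pieces is disjoint and exhaustive; that $h_{2d}(N)$ splits correctly among the pieces attached to symmetries of different $Q_N$-lengths (this is where the distinct rows of Table~II are used, and where one checks that the coset count $g_M(\sigma)$ is constant on each piece); and, for the order-$8$ preimages especially, that each such $M$ is correctly matched with its $K_i(a,b)$-description so that the $Q_N$-values of all three of its mutually orthogonal symmetries, and which one lies in $\mathfrak S(N,L)$, are available. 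Each of these is a finite verification resting entirely on Tables~I--IV, Section~\ref{isometry}, and Propositions~\ref{svalue} and \ref{8fin}, and introduces no new ingredient.
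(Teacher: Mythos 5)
Your plan is correct and follows essentially the same route as the paper: a case-by-case analysis over $\vert O(N)\vert$, using Table II together with the centralizer/coset counts of Lemma \ref{h4} (and its analogues for $\gH_8$, $\gH_{12}$, $\gH_{16}$) to extract each $h_{2d}(\sigma)$, the specialness results (Propositions \ref{specialone}, \ref{order8}, \ref{order12}, \ref{8fin}) together with Table IV to convert $Q_N(\sigma)$ into $Q_M(\sigma)$, and the explicit identifications of Propositions \ref{order3} and \ref{order4} plus the reduction to $N=p\bI$ for the large isometry groups. The remaining work you defer is exactly the finite bookkeeping the paper itself carries out case by case, with no new ingredient required.
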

\begin{proof}
The proof is a case-by-case analysis according to the size of $O(N)$.  For each case, the proof will show how we can determine the label of $\Lpinv(N)$.  For simplicity, we write $h_{2d}$ for $h_{2d}(N)$ and $\Gamma$ for $\Lpinv(N)$ in the following discussion.  We recall that all the numbers $h_{2d}$ can be obtained from Theorem \ref{class}.   To determine the label of $\Gamma$, it suffices to determine the label of $\gH_{2d}$ for each possible $d$.  For $\gH_4$, it suffices to determine the label of $\gH_4(\sigma)$, where $\sigma$ runs through a complete set of representatives of conjugacy classes of symmetries in $O(N)$.  Note that these conjugacy classes are explicitly described in Section \ref{isometry}.
\vskip 2mm

\noindent \fbox{$\vert O(N) \vert = 4$}:  For the unique $\sigma \in S(N)$, $h_4(\sigma)$ is simply $\vert \Gamma_\sigma \vert$, which can be determined by Table II.  By Proposition \ref{specialone}, there is at most one lattice in $\Gamma_\sigma$ to which $\sigma$ is special, and whether or not such a lattice exists is determined by Table II.  Therefore, the label of $\gH_4 = \gH_4(\sigma)$ can be determined.
\vskip 2mm

\noindent \fbox{$\vert O(N) \vert = 8$}: In this case, $O(N)$ contains exactly three symmetries, and these symmetries commute with each other. Thus each symmetry forms its own conjugacy class in $O(N)$.   Clearly, $h_8(\sigma)$ is just $h_8$ for each $\sigma \in S(N)$.  If $\sigma$ is special to some $M \in \Gamma_\sigma$, then $M \in \gH_8(\sigma)$ by Proposition \ref{order8}.  Moreover $\sigma$ will be the only symmetry special to $M$.  Using Table II we can determine which $\sigma$ is special to $M$.   Therefore, the label of $\gH_8$ is determined.

If $M \in \gH_4(\sigma)$, then $G_M(\sigma) = O(N)$ because $O(M)$ is normal in $O(N)$.  Therefore,
$$\vert \Gamma_\sigma \vert = 2 h_4(\sigma) + h_8(\sigma),$$
and hence $h_4(\sigma)$ can be determined.  Since $\sigma$ is not special to any lattice in $\gH_4(\sigma)$, the label of $\gH_4(\sigma)$ is determined.
\vskip 2mm

\noindent \fbox{$\vert O(N)\vert = 12$}: First of all, $h_{12}$, which is either 0 or 1, is known.  Moreover, none of the symmetries in $O(N)$ is special to any $M \in \gH_{12}$.  Therefore, the label of $\gH_{12}$ can be computed.

Let $\sigma$ be a symmetry in $O(N)$.  Then $C(\sigma)$ is $\{\pm I, \pm \sigma \}$.  Therefore,
$$\vert \Gamma_\sigma \vert = h_4(\sigma) + h_{12},$$
and so $h_4(\sigma)$ is determined.  From Table II we can decide if $\sigma$ is special to any lattice in $\Gamma_\sigma$.  Therefore, the label of $\gH_4(\sigma)$ can be determined.
\vskip 2mm

\noindent \fbox{$\vert O(N) \vert = 16$}:  We denote the unique symmetry in the center of $O(N)$ by $\tau$.  Let $M$ be a lattice in $\Gamma$.  Then, by Proposition \ref{8fin}, $\vert O(M) \vert = 16$ if and only if $\tau$ is special to $M$.  This shows that the label of $\gH_{16}$ is determined.

For the label of $\gH_8$, it suffices to compute the label of $\gH_8(\sigma)$ for any symmetry $\sigma$ of $N$ which is not in the center.  Let $\{\sigma, \sigma', \tau\}$ be the orthogonal system that contains $\sigma$.    From the description of orthogonal systems and conjugacy classes of symmetries in Section \ref{isometry}, we see that $\sigma$ and $\sigma'$ are conjugate in $O(N)$.  Therefore, by Propositions \ref{specialone} and \ref{order8}, $h_8(\sigma)$ is either 0 or 1, and it is 1 if and only if $\sigma$ is special to a lattice in $\gH_8(\sigma)$.   This can be determined by computing $\vert \Gamma_{\sigma} \vert$ using Table II.  So, the label of $\gH_8$ can be determined.  Note also that $O(M)$ is normal in $O(N)$ whenever $M \in \gH_8$.  Therefore, $\vert \gH_8(\sigma)\vert = 2h_8(\sigma)$.

Suppose that $M \in \gH_4$.  Then $M \in \gH_4(\sigma)$ for some $\sigma \in S(N)$, and $\sigma$ is not special to $M$ by Proposition \ref{order8}.  Therefore, the label of $M$ is determined.  It remains to compute $h_4(\sigma)$ for every $\sigma$ in $S(N)$.  Suppose that $\sigma$ is not $\tau$.  Then $C(\sigma)$ is $\{\pm I\}\times \{1, \tau, \sigma, \sigma\tau\}$ which has order 8.  Therefore,
\begin{equation} \label{order1641}
\vert \Gamma_\sigma \vert = 2h_4(\sigma) + 2h_8(\sigma) + h_{16},
\end{equation}
which shows that $h_4(\sigma)$ can be determined.  One the other hand, for the symmetry $\tau$, we have the equation
\begin{equation} \label{order1642}
\vert \Gamma_\tau \vert = 4h_4(\tau) + 2h_8(\tau) + h_{16}
\end{equation}
because $C(\tau) = O(N)$.  Since $\tau$ is in $O(M)$ whenever $M \in \gH_8$, therefore $h_8(\tau) = h_8$.  This shows that $h_4(\tau)$ is also determined.
\vskip 2mm

\noindent \fbox{$\vert O(N) \vert = 24$}:  Let $\tau$ be the unique symmetry in the center of $O(N)$.  Whether or not $\tau$ is special to any lattice can be determined by Table II, and by Proposition \ref{8fin} it is indeed special to some lattice $M \in \Gamma$ if and only if $M \in \gH_{24}$.   Since $h_{24}$ is 0 or 1, the label of $\gH_{24}$ is determined.

Let $\sigma \in S(N)$ which is not $\tau$, and let $\{\sigma, \sigma', \tau\}$ be the orthogonal system containing $\sigma$.  Again, from the description of orthogonal systems in Section \ref{isometry}, we see that $\sigma$ and $\sigma'$ belong to different conjugacy classes in $O(N)$.   It follows from Propositions \ref{specialone} and \ref{order8} that every lattice in $\gH_8(\sigma)$ must have either $\sigma$ or $\sigma'$ as its unique special symmetry.   Therefore, the label of each lattice in $\gH_8(\sigma)$ can be determined once we know whether $\sigma$ or $\sigma'$ is special to any lattice.  The latter can be checked by using Table II.  So, the label of $\gH_8(\sigma)$, and hence the label of $\gH_8$, can be determined.   Furthermore, $C(\sigma)$ is  $\{\pm I\}\times \{I, \sigma, \tau, \tau\sigma\}$, which has order 8.  Therefore, $\vert \gH_8(\sigma) \vert = h_8(\sigma)$,
\begin{equation}
\vert \Gamma_\sigma \vert = 2h_4(\sigma) + h_8(\sigma) + h_{24},\label{24first}
\end{equation}
and from this equation we can determine $h_4(\sigma)$.  Since $\sigma$ is not special to any lattice in $\gH_4(\sigma)$, the label of $\gH_4(\sigma)$ is determined.

For the symmetry $\tau$, $C(\tau)$ is $O(N)$ because $\tau$ is in the center of $O(N)$.  Therefore, $\vert \gH_8(\tau) \vert = 3h_8(\tau) = 3h_8$ and $\vert \gH_4(\tau) \vert = 6h_4(\tau)$; hence
\begin{equation}
\vert \Gamma_\tau \vert = 6h_4(\tau) + 3h_8(\tau) + h_{24},\label{24second}
\end{equation}
and $h_4(\tau)$ can be determined.  Since $\tau$ is not special to any lattice in $\gH_4$, the label of $\gH_4(\tau)$ is determined.
\vskip 2mm

\noindent \fbox{$\vert O(N) \vert = 48$}: We only present the argument for the case $N = p\bI$.  Let $\{x_1, x_2, x_3\}$ be an orthogonal basis of $N$.  We distinguish the symmetries of $N$ into two types:
$$\begin{cases}
\mbox{ Type I }: & \tau_{x_1}, \tau_{x_2}, \tau_{x_3},\\
\mbox{ Type II }: & \tau_{x_i + x_j} \mbox{ and } \tau_{x_i - x_j}, 1\leq i < j \leq 3.
\end{cases}$$
So, $Q_N(\sigma) = p^2$ if $\sigma$ is a Type I symmetry; otherwise $Q_N(\sigma) = 2p^2$.

By Theorem \ref{class}, $h_{24} = h_{48} = 0$, and both $h_{12}$ and $h_{16}$ are either 0 or 1.   The lattices in $\gH_{12}$ and $\gH_{16}$, if they exist, are isometric to
$$\begin{cases}
K_1(1, \frac{p^2+2}{3})\, (\mbox{disc $= p^2$})  \mbox{ or } K_1(p^2, \frac{2p^2+1}{3})\, (\mbox{disc $= p^4$}) & \mbox{ for $\gH_{12}$},\\
K_3(1,p^2) \, (\mbox{disc $= p^2$}) \mbox{ or } K_3(p^2,1) \, (\mbox{disc $= p^4$}) & \mbox{ for $\gH_{16}$}.
\end{cases}$$
The labels of $K_1(1, \frac{p^2+2}{3})$ and $K_1(p^2, \frac{2p^2+1}{3})$ are $\lb 12; 2, 2, 2 \rb$ and ~$\lb 12; 2p^2, 2p^2, 2p^2 \rb$ respectively, whereas the labels of $K_3(1,p^2)$ and $K_3(p^2,1)$ are $\lb 16; 1, 1,2,2, p^2\rb$ and $\lb 16; 1, p^2, p^2, 2p^2, 2p^2 \rb$ respectively.  Thus, the labels of $\gH_{12}$ and $\gH_{24}$ are determined.  The symmetries of $K_1(1, \frac{p^2+2}{3})$ and $K_1(p^2, \frac{2p^2+1}{3})$ are all Type II.  However,  for either $K_3(1,p^2)$ or $K_3(p^2,1)$, it has three Type I symmetries and two Type II symmetries.

From Remark \ref{remarkI}, we see that $h_8$ is 0 or 1.  If $h_8 = 1$, the lattices in $\gH_8$ are isometric to
$$\begin{pmatrix}
1 & 0 & 0\\
0 & 2 & 1\\
0 & 1 & \frac{p^2+1}{2}
\end{pmatrix} \, (\mbox{disc $= p^2$}) \mbox { or }
\begin{pmatrix}
2 & 1 & 0\\
1 & \frac{p^2+1}{2} & 0\\
0 & 0 & p^2
\end{pmatrix}\, (\mbox{disc $=p^4$}).$$
The label of these two lattices are $\lb 8; 1, 2, 2p^2 \rb$ and $\lb 8; 2, p^2, 2p^2 \rb$ respectively.   As a result, the label of $\gH_8$ is determined.  The orthogonal system of either lattice consists of one Type I symmetry and two Type II symmetries.  Exactly one of the Type II symmetries is special to the lattice.

Let $\sigma$ be a Type I symmetry.  Then $h_{12}(\sigma) = 0$, and $h_{16}(\sigma) = h_{16}$.  If $h_{16} = 1$, then there are three lattices in $\gH_{16}(\sigma)$, permuted transitively by an order 3 isometry of $N$, and $\sigma$ is special to exactly one of these three lattices.  Suppose that there exists $M$ in $\gH_{8}(\sigma)$.  If $\phi \in O(N)$ and $\phi\sigma\phi^{-1} = \sigma$, then clearly $\phi(M) \in \gH_8(\sigma)$.  Conversely, if $\phi(M) \in \gH_8(\sigma)$, then $\phi\sigma\phi^{-1}$ is the unique Type I symmetry in $O(M)$ and hence $\phi\sigma\phi^{-1} = \sigma$.  This shows that $G_M(\sigma)$ is in fact equal to $C(\sigma)$.  A straightforward computation shows that $C(\sigma)$ has order 16.  Therefore, $\vert \gH_8(\sigma) \vert = 2h_8$ and $\vert \gH_4(\sigma) \vert = 4h_4(\sigma)$.  Consequently,
$$\vert \Gamma_\sigma \vert = 4h_4(\sigma) + 2h_8 + 3h_{16},$$
and so $h_4(\sigma)$ is determined.  Since $\sigma$ is not special to any lattice in $\gH_4(\sigma)$, the label of $\gH_4(\sigma)$ is determined.

Now, let $\tau$ be a Type II symmetry.   Note that $O(N)$ acts, by conjugation, transitively on the set of Type II symmetries.  So, $h_{12}(\tau) = h_{12}$ and $h_8(\tau) = h_8$.    Now, if $h_{16} = 1$, $\tau$ must be a symmetry of at least one of the three lattices in $\gH_{16}$.  But, since each lattice in $\gH_{16}$ has exactly two Type II symmetries, $\vert \gH_{16}(\tau) \vert$ must be 1, whence $\vert \gH_{16}(\tau) \vert = h_{16}$.

For the rest of the discussion, we may assume that $\tau = \tau_{x_1 + x_2}$.  A direct calculation shows that the centralizer of $\tau$ has order 8.
Suppose that $M \in \gH_{12}(\tau)$.  The other two symmetries in $O(M)$ are either $\{\tau_{x_2-x_3}, \tau_{x_1+x_3}\}$ or $\{\tau_{x_2+x_3}, \tau_{x_1-x_3}\}$.  Let $\phi$ be the isometry of $N$ which fixes $x_3$ and switches $x_1$ and $x_3$.  Then $\phi(M) \neq M$ but $\tau \in O(\phi(M))$.  Therefore, $\vert \gH_{12}(\tau) \vert = 2h_{12}$.

If $M \in \gH_8(\tau)$, the orthogonal system of $M$ must be $\{\tau_{x_3}, \tau_{x_1 + x_2}, \tau_{x_1-x_2}\}$.  So, $\tau \in \phi(M)$ if and only of $\phi$ is in the centralizer of $\tau_{x_3}$ which has order 16.  This shows that $\vert \gH_8(\tau)\vert = 2h_8$.  Consequently,
$$\vert \Gamma_\tau \vert = 2h_4(\tau) + 2h_8 + 2h_{12} + h_{16}.$$
Therefore, $h_4(\tau)$ is determined.  Since $\tau$ is not special to any lattice in $\gH_4$, the label of $\gH_4(\tau)$ is determined.
\end{proof}

\subsection{An example}  We illustrate the discussion thus far by computing  the labels of all the classes in the genus of the lattice $K(n): = K_4(1, 6\cdot 7^{2n} + 1)$, $n \geq 0$.  This of course will lead to a class number formula for $K(n)$.

For $n \geq 0$,
$$K(n) \cong \begin{pmatrix}
2 & 0 & -7^n \\
0 & 2 & -7^n\\
-7^n & -7^n & 7^{2n+1}
\end{pmatrix} \cong
\begin{pmatrix}
2 & 0 & -1\\
0 & 2 & -1\\
-1 & -1 & 6\cdot 7^{2n} + 1
\end{pmatrix}.$$
One can easily check that $dK(n) = 24\cdot 7^{2n}$ and $N(n-1): = \Lambda_7(K(n))$ is the lattice $K(n-1)^{7^2}$.  So, $\lambda_7(K(n)) = K(n-1)$.   The class number of $K(0) = K_4(1,7)$ is 1.  The label of $K(n)$ is $\lb 16; 2, 2, 4, 4, 24\cdot 7^{2n} \rb$, and the label of $N(n-1)$ is $\lb 16; 2\cdot 7^2, 2\cdot 7^2, 4\cdot 7^2, 4\cdot 7^2, 24\cdot 7^{2n} \rb$.

For $n \geq 1$,  let $\mathcal G_{2i}(n)$ be the set of lattices in $\gen(K(n))$ whose isometry groups have order $2i$, and $g_{2i}(n)$ be the number of classes in $\mathcal G_{2i}(n)$.  From Example \ref{example}, we see that
$$g_2(1) = 1, \quad g_4(1) = 3, \quad g_8(1) = 0, \quad g_{16}(1) = 1.$$
It is clear that $K(1)$ represents the only class in $\mathcal G_{16}(1)$.  Using row {\bf (1)} of Table II and equations (\ref{order1641}) and (\ref{order1642}), we can show that $h_4(\sigma) = 1$ for each $\sigma \in S(N(0))$.   Thus the labels of the three classes in $\mathcal G_4(1)$ are
$$\lb 4; 2 \rb, \quad \lb 4; 4 \rb, \quad \lb 4; 24 \rb.$$

\begin{lem} \label{example168}
For $n \geq 1$,
\begin{enumerate}
\item[(a)] $g_{16}(n) = 1$, and $K(n)$ represents the only class in $\mathcal G_{16}(n)$;

\item[(b)] $g_8(n) = 0$.
\end{enumerate}
\end{lem}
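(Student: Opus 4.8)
The plan is to prove statements (a) and (b) simultaneously by induction on $n$, strengthening the claim to the assertion (IH$_n$) that $g_{16}(n)=1$, that $K(n)$ represents the unique class in $\mathcal G_{16}(n)$, and that every class of $\gen(K(n))$ has isometry group of order $2$, $4$, or $16$.  Since (IH$_n$) obviously implies (a) and (b), it suffices to establish it; and the case $n=1$ is precisely what Example \ref{example} and the paragraph following it record, namely that the five classes of $\gen(K(1))$ have isometry groups of orders $2,4,4,4,16$, with $K(1)$ the only one of order $16$.

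For the inductive step I would fix $n\ge 1$, assume (IH$_n$), and set $p=7$, $L=K(n+1)$, and $N=N(n)=\Lambda_7(K(n+1))$.  Since $\lambda_7(K(n+1))=K(n)$, the lattice $N(n)$ is similar to $K(n)=K_4(1,6\cdot 7^{2n}+1)$, so $\vert O(N(n))\vert=16$ by Section \ref{isometry}, and the similarity yields a bijection $\gen(K(n))/\bsim\to\gen(N(n))/\bsim$, preserving orders of isometry groups, which carries $[K(n)]$ to $[N(n)]$; thus (IH$_n$) transports to the statement that every class of $\gen(N(n))$ has isometry group of order $2$, $4$, or $16$, with $[N(n)]$ the unique one of order $16$.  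Since $\gen(K(n+1))/\bsim$ is the disjoint union of the sets $\Gamma_7^L(N')/\bsim$ as $[N']$ runs over the classes of $\gen(N(n))$, we have $g_{2d}(n+1)=\sum_{[N']}h_{2d}(N')$ for each $d$.

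The key computation is $h_{16}(N(n))$ and $h_8(N(n))$, via Theorem \ref{class}.  Completing the square in the Gram matrix of $K(n+1)$ gives $L_7\cong\langle 2,2,6\cdot 7^{2n+2}\rangle$, which falls under Case {\bf (2)} of Table I (here $\alpha=0$ and $\beta=2n+2\ge 3$), so by Proposition \ref{ani1-1} and Table I, $w:=\vert\Gamma_7^L(N(n))\vert=7^2$ and $[w]_2=1$.  For $s=s(N(n),L)$ I would invoke Proposition \ref{svalue}: in the notation of \eqref{t}, $(t,\epsilon)=(2n+2,6)$, and from the label $\lb 16;2\cdot 7^2,2\cdot 7^2,4\cdot 7^2,4\cdot 7^2,24\cdot 7^{2n+2}\rb$ of $N(n)$ (the formula for the label of $N(n-1)$ displayed above, with $n$ replaced by $n+1$) the only symmetry $\sigma$ with $\ord_7(Q_{N(n)}(\sigma))=t$ is the central one, for which $Q_{N(n)}(\sigma)=24\cdot 7^{2n+2}$; since $7^{-t}\cdot 24\cdot 7^{2n+2}\cdot 6=144\in(\z_7^\times)^2$, this $\sigma$ lies in $\mathfrak S(N(n),L)$, so $s=1$.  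The row $\vert O(N)\vert=16$ of Table III then gives $h_{16}(N(n))=[w]_2=1$ and $h_8(N(n))=\tfrac{s-[w]_2}{2}=0$.

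To finish, I would note that by Corollary \ref{con2}, $h_{2d}(N')=0$ whenever $2d\nmid\vert O(N')\vert$.  Together with the transported (IH$_n$), this forces $h_{16}(N')=0$ for every $[N']\ne[N(n)]$ (as then $\vert O(N')\vert\in\{2,4\}$) and $h_{2d}(N')=0$ for all such $N'$ and all $2d\ge 8$.  Hence $g_{16}(n+1)=h_{16}(N(n))=1$ and $g_8(n+1)=h_8(N(n))=0$, while the same reasoning gives $g_{12}(n+1)=g_{24}(n+1)=g_{48}(n+1)=0$, so (IH$_{n+1}$) holds.  Finally $K(n+1)=K_4(1,6\cdot 7^{2n+2}+1)$ has $\vert O\vert=16$ and $\Lambda_7(K(n+1))=N(n)$, so it lies in $\Gamma_7^L(N(n))$ and therefore represents the unique class in $\mathcal G_{16}(n+1)$.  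The main obstacle is the computation of $s$: the entire lemma reduces to $h_8(N(n))=0$ and $h_{16}(N(n))=1$, i.e.\ to $s=1$ with $w$ odd, and the delicate points are verifying the square-class condition $24\cdot 6\in(\z_7^\times)^2$ (were it to fail, Table III would return the impossible $h_8(N(n))=-\tfrac12$) and reading off from the explicit label of $N(n)$ that the central symmetry is the unique one of the correct $7$-order; the rest is mechanical.
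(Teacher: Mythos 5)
Your proof is correct and follows essentially the same route as the paper's: an induction in which Table III is applied to the unique class of $\gen(\Lambda_7(K(n)))$ whose isometry group has order divisible by $8$, with $w=7^2=49$ forcing $h_{16}=[49]_2=1$ and $h_8=\frac{s-1}{2}=0$. The paper dispatches part (b) with ``a similar manner,'' so your explicit computation of $s=1$ via Proposition \ref{svalue} (the square-class check $24\cdot 6=144$) and your explicitly strengthened induction hypothesis merely fill in details the paper leaves implicit.
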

\begin{proof}
We will provide a proof for part (a); part (b) can be proved in a similar manner.

Part (a) for $n = 1$ is already explained.  For $n \geq 2$, let $M \in \mathcal G_{16}(n)$.  Then $\Lambda_7(M) \cong N(n)$.  But for $n \geq 2$, the $w$ for $N(n)$ is $7^2 = 49$ from Table I.  Therefore, by Table III,
$$g_{16}(n) = h_{16}(N(n-1)) = [49]_2 = 1.$$
Clearly, $K(n)$ represents the only class in $\mathcal G_{16}(n)$.
\end{proof}

For $\ell = 4$ or $16$, let
$$\mathcal G_4^\ell(n+1) = \{ M \in \mathcal G_4(n+1): \lambda_7(M) \in \mathcal G_\ell(n) \}.$$

\begin{lem} \label{7times}
For $n \geq 1$,
\begin{enumerate}
\item[(a)] the label of $\mathcal G_4^4(n+1)$ is the multi-set containing the label of each class in $\mathcal G_4(n)$ repeated $7$ times;

\item[(b)] the label of $\mathcal G_4^{16}(n+1)$ is the multi-set containing $3$ copies of $\lb 4; 2\rb$ and $3$ copies of $\lb 4; 4\rb$.
\end{enumerate}
\end{lem}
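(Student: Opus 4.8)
The plan is to run an induction on $n$ in which Lemma \ref{7times} is proved at level $n$ together with the running claim that every class in $\mathcal G_4(n)$ has a label of the form $\lb 4;c\rb$ with $7\nmid c$. The base case $n=1$ is the explicit list $\lb 4;2\rb,\lb 4;4\rb,\lb 4;24\rb$ recorded just before Lemma \ref{example168}; and for $n\ge 2$ this running claim follows from the lemma at level $n-1$, because $\mathcal G_4(n)=\mathcal G_4^4(n)\cup\mathcal G_4^{16}(n)$. The latter uses Lemma \ref{example168} (no order-$8$ class, a single order-$16$ class in $\gen(K(n))$), a discriminant check excluding order-$12$, $24$ and $48$ classes (no lattice of discriminant $24\cdot 7^{2n}$ that is $7$-adically isometric to $K(n)_7$ can be one of $K_1(a,b)$, $K_2(a,b)$, $K_3(a,b)$, $\bI$, $\bA$, $\mathbf J$), and the fact that a fiber over an order-$2$ class contains no order-$4$ class (row $\vert O(N)\vert=2$ of Table III). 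Throughout put $L=K(n+1)$, so that $L_7\cong\langle 2,2,6\cdot 7^{2n+2}\rangle$, which for $n\ge 1$ is Case {\bf (2)} of Table I; hence $w:=\w(L)/\w(\Lambda_7(L))=49$ for every class of $\gen(\Lambda_7(L))$.

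For part (a) I would fix an order-$4$ class $[N']$ of $\gen(K(n))$ and set $N={N'}^{7^2}$, a class of $\gen(\Lambda_7(L))$ with a single symmetry $\sigma$ and $Q_N(\sigma)=49\,Q_{N'}(\sigma)$; by the running claim $7\nmid Q_{N'}(\sigma)$, so $\ord_7 Q_N(\sigma)=2$. Row {\bf (2)} of Table II then gives $\vert\Lpinv(N)_\sigma\vert=7$ with no lattice of $\Lpinv(N)_\sigma$ having $\sigma$ special to it, and row $\vert O(N)\vert=4$ of Table III gives $h_4(N)=7$. By Corollary \ref{con2} every $M\in\Lpinv(N)_\sigma$ has $\vert O(M)\vert$ dividing $4$, hence $O(M)=\{\pm I,\pm\sigma\}$ of order exactly $4$, and by Lemma \ref{conf} an isometry between two members of the fiber $\Lpinv(N)$ lies in $O(N)$ and thus fixes each member; so these $7$ lattices represent $7$ distinct classes. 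Finally Table IV (Case {\bf (2)}, $\sigma$ not special) gives $Q_M(\sigma)=\tfrac{1}{49}Q_N(\sigma)=Q_{N'}(\sigma)$, so each of the $7$ classes has label $\lab(N')$. Letting $[N']$ run over all order-$4$ classes of $\gen(K(n))$ proves (a) and carries the running claim to level $n+1$ for this part.

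For part (b), $\mathcal G_4^{16}(n+1)$ is the set of order-$4$ classes in the fiber over $N:=K(n)^{7^2}$, since $\mathcal G_{16}(n)=\{[K(n)]\}$ by Lemma \ref{example168}; note $\vert O(N)\vert=16$. From Section \ref{isometry}, the symmetries of $K(n)=K_4(1,6\cdot 7^{2n}+1)$ fall into conjugacy classes $\{\tau_{x_1},\tau_{x_2}\}$, $\{\tau_{x_1+x_2},\tau_{x_1-x_2}\}$, $\{\tau_{x_1+x_2+2x_3}\}$ with $Q$-values $2$, $4$, $24\cdot 7^{2n}$, the last being central. I would then supply Theorem \ref{class} with the three inputs: $w=49$, hence $[w]_2=1$; $s=1$, because by Proposition \ref{svalue} with $(t,\epsilon)=(2n+2,6)$ the only symmetry of $N$ with $\ord_7 Q_N(\sigma)=2n+2$ is the central $\tau=\tau_{x_1+x_2+2x_3}$, for which $7^{-(2n+2)}Q_N(\tau)\cdot 6=24\cdot 6=12^2$ is a square; and $f=29$, since the four non-central symmetries have $\ord_7 Q_N(\sigma)=2$ (contributing $7$ each by row {\bf (2)} of Table II) while $\tau$ has $\ord_7 Q_N(\tau)=2n+2\ge 3$ (contributing $1$). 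Theorem \ref{class} in the row $\vert O(N)\vert=16$ then yields $h_{16}(N)=1$, $h_8(N)=0$, $h_4(N)=6$, $h_2(N)=3$.

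It remains to attach labels to the six order-$4$ classes. Splitting $\gH_4$ along conjugacy classes of symmetries (Corollary \ref{conjugacyh4}) and reusing (\ref{order1641}) and (\ref{order1642}) from the proof of Theorem \ref{labelchange}, with $h_8(N)=0$ forcing $h_8(\sigma)=0$ for every $\sigma$: for a non-central $\sigma$ one gets $\vert\Lpinv(N)_\sigma\vert=2h_4(\sigma)+1$, so $\vert\Lpinv(N)_{\tau_{x_1}}\vert=\vert\Lpinv(N)_{\tau_{x_1+x_2}}\vert=7$ gives $h_4(\tau_{x_1})=h_4(\tau_{x_1+x_2})=3$; for the central $\tau$ one gets $\vert\Lpinv(N)_\tau\vert=4h_4(\tau)+1=1$, so $h_4(\tau)=0$. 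Each such order-$4$ lattice has its symmetry non-special (Proposition \ref{order8}), so Table IV gives the labels $\lb 4;\tfrac{1}{49}Q_N(\tau_{x_1})\rb=\lb 4;2\rb$ and $\lb 4;\tfrac{1}{49}Q_N(\tau_{x_1+x_2})\rb=\lb 4;4\rb$; hence the label of $\mathcal G_4^{16}(n+1)$ is $3$ copies of $\lb 4;2\rb$ and $3$ copies of $\lb 4;4\rb$, which is (b). The delicate point throughout is the simultaneous induction underpinning the running claim — in particular ruling out order-$12$, $24$ and $48$ classes in $\gen(K(n))$ — since this is exactly what keeps the relevant entry of Table II at $\ord_7 Q_N(\sigma)=2$, and so yields the count $7$ rather than $1$ in part (a).
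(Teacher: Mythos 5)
Your proof is correct and follows essentially the same route as the paper: the induction on the possible labels of $\mathcal G_4(n)$, row {\bf (2)} of Table II together with Table III and Table IV for part (a), and equations (\ref{order1641})--(\ref{order1642}) with the computation of $w$, $s$, $f$ for the fiber over $K(n)^{7^2}$ in part (b). You merely make explicit some steps the paper leaves implicit (notably ruling out classes of order $12$, $24$, and $48$ in $\gen(K(n))$ so that $\mathcal G_4(n)=\mathcal G_4^4(n)\cup\mathcal G_4^{16}(n)$, which is what sustains the running claim), and these added details are accurate.
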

\begin{proof}
(a) First of all, an induction argument shows that the label of each class in $\mathcal G_4(n)$ is one of the following: $\lb 4; 2\rb$, $\lb 4; 4\rb$, and $\lb 4; 24 \rb$.  So, if $M \in \mathcal G_4^4(n+1)$, then the label of $\Lambda_7(M)$ is of the form $\lb 4; d \rb$, where $\ord_7(d) = 2$.  From row {\bf (2)} in Table II, we see that each of these lattices will produce 7 classes in $\mathcal G_4(n+1)$ with the same label.

\noindent (b) All the classes in $\mathcal G_4^{16}(n+1)$ descend via $\Lambda_7$ to the class containing $N(n)$.  If $\tau$ is the symmetry in the center of $O(N(n))$, then $\vert \Gamma_\tau \vert = 1$ from row {\bf (2)} of  Table II, and so $h_4(\tau) = 0$ by equation (\ref{order1642}).  If $\sigma$ is any other symmetry in $O(N(n))$, then using row {\bf (2)} of Table II and equation (\ref{order1641}) we can see that $h_4(\sigma) = 3$.
\end{proof}

As a corollary, we obtain the following recursive formula for $g_4(n)$:
$$g_4(n+1) = 7\cdot g_4(n) + 6, \quad n \geq 1,$$
with the initial condition $g_4(1) = 3$.  Therefore,
$$g_4(n) = 4\cdot 7^{n-1} - 1, \quad  n \geq 1.$$

As for $g_2(n+1)$, note that all the lattices  in $\mathcal G_2(n+1)$ descend via $\lambda_7$ to $\mathcal G_2(n)$, $\mathcal G_4(n)$, and $\mathcal G_{16}(n)$.  Using Table III (after calculating the $w$, $f$, and $s$ in each case), we can calculate the contribution from each of these sets: $49 g_2(n)$ from $\mathcal G_2(n)$, $21 g_4(n)$ from $\mathcal G_4(n)$, and 3 from $\mathcal G_{16}(n)$.  Therefore,
$$g_2(n+1) = 49 g_2(n) + 21 g_4(n) + 3, \quad n \geq 1,$$
which implies
$$g_2(n) = 3\cdot 7^{2(n-1)} - 2\cdot 7^{n-1} - \frac{3}{8}(7^{2(n-1)} - 1), \quad n \geq 1.$$
So, finally, the class number of $K(n)$ is
$$g_2(n) + g_4(n) + g_{16}(n) = 3\cdot 7^{2(n-1)} + 2\cdot 7^{n-1} - \frac{3}{8}(7^{2(n-1)} - 1).$$

\section{Labels of stable lattices} \label{stable}

Let $L$ be a primitive ternary lattice and let $q$ be a prime. If $\ord_q(dL)\ge 2$, then one may easily show that $\ord_p(d(\lambda_{eq}(L)))<\ord_q(dL)$, where $e=2$ if $q=2$ and $L$ is even, and $1$ otherwise.  Furthermore if $L$ is odd and $\ord_2(dL)=1$, then $d(\lambda_2(L))$ is odd.  Hence $L$ can be transformed, via a sequence of Watson transformations at primes or at 4, to a primitive ternary lattice $K$ such that $\ord_q(dK) \leq 1$ for all primes $q$, and that $\ord_2(dK) = 1$ if and only if $K$ is even.  For details, see \cite[Corollary 2.8]{ce} and \cite[Lemmas 2.1, 2.3, 2.4]{co}.\footnote{There is a mistake in \cite[Lemma 2.3(2)]{co}.  When $\frac{dM_2}{4} \equiv 5$ mod 8, $\lambda_4(L)_2$ should be $M_2^3\perp N_2^{\frac{1}{2}}$.  This affects neither the results in \cite{co} nor the conclusion we draw here.} For the sake of convenience, we call such a lattice $K$ {\em stable}.    It is clear that a stable lattice is maximal but not vice versa.  Moreover,  if $M$ is another stable lattice such that $dM = dK$, then for any prime $q$, $M_q \cong K_q$ if and only if their Hasse symbols are the same.

Henceforth, $K$ is always a stable ternary lattice.  In this section, we will show that the labels of $\gen(K)$ can be effectively determined.   Let $\h =\begin{pmatrix} 0&1\\1&0 \end{pmatrix}$ and $\plane=\begin{pmatrix} 2&1\\1&2 \end{pmatrix}$.  Then
$$K_2 \cong \begin{cases} \plane \perp \langle 3dK\rangle \qquad &\text{if $K_2$ is anisotropic,}\\
  \h \perp \langle -dK\rangle \qquad &\text{otherwise.}\\
\end{cases}$$
If $K$ is even, then $K_2$ is always isotropic and
$$K_2 \cong \plane \perp \langle 3dK\rangle \cong  \h \perp \langle -dK\rangle.$$

For any integer $a$, let $\nu(a)$ be the number of distinct prime divisors of $a$.  If $q$ is a prime, we define
$$e_q(a): = \begin{cases}
1 & \mbox{ if $q$ divides $a$};\\
0 & \mbox{ otherwise}.
\end{cases}$$
Let $\mathfrak P$ be the product of odd prime divisors $q$ of $dK$ such that $K_q$ is anisotropic, and $\mathfrak Q$ be the product of odd prime divisors $q$ of $dK$ such that $K_q$ is isotropic.  Note that $dK=2^{e_2(dK)}\mathfrak P\mathfrak Q$.  For any positive integer $t$, let $b_t(K)$ be the number of classes in $\gen(K)$ whose isometry groups are of order $t$.

For positive integers $\alpha,\beta,\gamma$, we define
$$
\Phi_K(\alpha)=\displaystyle\prod_{q\mid \mathfrak P}  \left(1-\left(\frac{-\alpha}q\right)\right)
\prod_{q\mid \mathfrak Q} \left(1+\left(\frac{-\alpha}q\right)\right),
$$
and
$$
\begin{array} {rl}
\Phi_K(\alpha,\beta,\gamma)=& \displaystyle \prod_{q\mid \mathfrak P} \left(1+\left(\frac{-\beta\gamma}q\right)\right) \displaystyle\left(1+\left(\frac{-\gamma\alpha}q\right)\right) \left(1+\left(\frac{-\alpha\beta}q\right)\right)\times \\
&\displaystyle\prod_{q\mid \mathfrak Q} \left(1-\left(\frac{-\beta\gamma}q\right)\right) \left(1-\left(\frac{-\gamma\alpha}q\right)\right) \left(1-\left(\frac{-\alpha\beta}q\right)\right).\\
\end{array}
$$
In below, the Hasse symbol of $K$ at a prime $q$ is denoted by $S_q(K)$.

\begin{lem} \label{24case} Up to isometry, there is at most one lattice in $\gen(K)$ whose isometry group is of order $24$. Furthermore,
$$b_{24}(K)=\begin{cases} 0  &\text{if $K$ is odd and $S_2(K) = -1$}; \\
\displaystyle{\frac{e_3(dK)}{2^{\nu(\mathfrak P\mathfrak Q)-1}}\Phi_K(3)}   &\text{otherwise}.
\end{cases}$$
\end{lem}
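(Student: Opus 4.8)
The plan is to identify the unique possible candidate for an order‑$24$ lattice in $\gen(K)$ and then decide, by a local–global argument, exactly when it belongs to $\gen(K)$. If $M\in\gen(K)$ has $\vert O(M)\vert=24$, then by Section \ref{isometry} $M\cong K_2(a,b)$ for coprime positive integers $a,b$, with $dM=3a^2b$. Since $K$ is stable, $dK$ is squarefree, so $dM=dK$ forces $a=1$; hence $3\mid dK$ and $M\cong K_2(1,dK/3)\cong\plane\perp\langle dK/3\rangle=:M_0$. This already proves the uniqueness statement and gives $b_{24}(K)\le 1$, and when $3\nmid dK$ there is no such $M$, matching $e_3(dK)=0$. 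So from now on assume $3\mid dK$, and the task is to decide whether $M_0\in\gen(K)$.

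One checks directly, using that $\plane$ is even unimodular of discriminant $3$, that $M_0$ is again a stable lattice with $dM_0=dK$, and that $M_0$ is even precisely when $K$ is. Hence, by the criterion recalled at the start of Section \ref{stable}, $M_0\in\gen(K)$ if and only if $S_q(M_0)=S_q(K)$ for every prime $q$; over $\q$ we have $M_0\cong\langle 2,6,dK/3\rangle$. For odd $q\nmid dK$ (and for $q=\infty$) both $K_q$ and $(M_0)_q$ are unimodular (resp. both spaces are positive definite), so the Hasse symbols agree automatically. For odd $q\mid dK$ with $q\ne 3$ one has $q\nmid dK/3$ and $\plane\cong\langle 2,6\rangle$ over $\z_q$, so $(M_0)_q$ has unimodular Jordan component of discriminant $3$ and a rank‑one $q$‑modular component; writing $K_q\cong\langle\epsilon_1,\epsilon_2,q\epsilon_3\rangle$, uniqueness of Jordan splittings at odd $q$ together with a discriminant comparison gives $(M_0)_q\cong K_q$ iff $\left(\frac{\epsilon_1\epsilon_2}{q}\right)=\left(\frac{3}{q}\right)$. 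Since $K_q$ is anisotropic iff $\left(\frac{-\epsilon_1\epsilon_2}{q}\right)=-1$, this is exactly $\left(\frac{-3}{q}\right)=-1$ when $q\mid\mathfrak P$ and $\left(\frac{-3}{q}\right)=1$ when $q\mid\mathfrak Q$, i.e. the factor of $\Phi_K(3)$ indexed by $q$ is nonzero.

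It remains to treat $q=2$ and $q=3$. If $K$ (hence $M_0$) is even, a short Hasse‑symbol computation from $M_0\cong\langle 2,6,dK/3\rangle$ and $K_2\cong\plane\perp\langle 3dK\rangle$ shows $S_2(M_0)=S_2(K)$ unconditionally. If $K$ is odd then $dK/3$ is odd, and since the binary form $\langle 2,6\rangle$ represents no odd $2$‑adic number, $(M_0)_2$ is anisotropic, equivalently $S_2(M_0)=+1$; thus $S_2(M_0)=S_2(K)$ iff $S_2(K)=+1$, and in the remaining case "$K$ odd and $S_2(K)=-1$" we get $M_0\notin\gen(K)$, so $b_{24}(K)=0$. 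Finally, at $q=3$ the product formula $\prod_q S_q=1$, applied to both $M_0$ and $K$, forces $S_3(M_0)=S_3(K)$ once agreement has been established at all other primes. Putting this together: outside the excluded case, $M_0\in\gen(K)$ iff the $\Phi_K(3)$‑factor at every odd $q\mid dK$ with $q\ne 3$ is nonzero; since the $q=3$ factor equals $1-\left(\frac{-3}{3}\right)=1$ (or $1+\left(\frac{-3}{3}\right)=1$) and every other factor lies in $\{0,2\}$, we get $\Phi_K(3)\in\{0,2^{\nu(\mathfrak P\mathfrak Q)-1}\}$, nonzero exactly in that case, whence $b_{24}(K)=e_3(dK)\,\Phi_K(3)/2^{\nu(\mathfrak P\mathfrak Q)-1}$.

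The delicate point is the analysis at $q=2$: Jordan splittings are not unique there, so one cannot compare them bluntly as at odd primes and must instead lean on the stability criterion of Section \ref{stable}, carrying out the Hasse‑symbol bookkeeping carefully — in particular the identity $S_2(\plane\perp\langle c\rangle)=+1$ for odd $c$, and the matching $S_2(M_0)=S_2(K)$ when $K$ is even. The reciprocity argument at $q=3$ is precisely what makes it unnecessary to record a separate local condition there.
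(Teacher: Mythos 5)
Your proposal is correct and follows essentially the same route as the paper: reduce to the unique candidate $\plane\perp\langle dK/3\rangle$ via the Section \ref{isometry} classification and squarefreeness of $dK$, exclude the odd case with $S_2(K)=-1$ by $2$-adic anisotropy, and translate the local splitting conditions at the odd primes dividing $dK$ into the nonvanishing of $\Phi_K(3)$. You merely spell out (correctly, including the reciprocity argument at $q=3$) the step the paper dismisses as ``straightforward to check.''
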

\begin{proof}
Suppose that $M$ is a stable lattice with $\vert O(M) \vert =24$.  Since $dM$ is squarefree, we have
$$
M = M(b): \cong    \begin{pmatrix} 2&1\\1&2 \end{pmatrix} \perp \langle b\rangle,
$$
for some positive integer $b$.   Note that different choices of $b$ yield lattices in different genera.  If $M(b)$ is odd, then $M(b)_2$ is anisotropic, which happens if and only if $S_2(M(b)) = 1$.  Therefore, if $S_2(K) = -1$ and $K$ is odd, then $M(b)$ is not in $\gen(K)$ for any positive integer $b$.

Now suppose that either $K$ is even or $S_2(K) = 1$.  It is clear that $M(b) \not \in \gen(K)$ for any positive integer $b$ if $3\nmid dK$ or $\plane$ does not split $K_q$ for all primes $q$, which is the same as $e_3(dK)\Phi_K(3) = 0$.  So, we further assume that $e_3(dK)\Phi_K(3) \neq 0$.  Let $b_0$ be chosen such that $dK = 3b_0$.  It is straightforward to check that $M(b_0)$ is in $\gen(K)$.
\end{proof}

\begin{rmk}
In the proof of Lemma \ref{24case}, the label of any $M(b)$ in $\gen(K)$ is known since $b = \frac{dK}{3}$.
\end{rmk}

\begin{lem} \label{12case} Up to isometry, there is at most one lattice in $\gen(K)$ whose isometry group is of order $12$, and its label is $\lb 12; 2,2,2\rb$. Furthermore,
$$b_{12}(K)=\begin{cases} 0 &\text{if $K$ is odd and $S_2(K)=-1$}; \\
\displaystyle{\frac{1-e_3(dK)}{2^{\nu(\mathfrak P\mathfrak Q)}}\Phi_K(3)}  &\text{otherwise}.
\end{cases}
$$
\end{lem}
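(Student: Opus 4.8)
The plan is to run the same argument as in the proof of Lemma~\ref{24case}, with $\plane\perp\langle b\rangle$ replaced by the model for a ternary lattice whose isometry group has order $12$. First I would invoke Section~\ref{isometry} and Lemma~\ref{1224}: if $|O(M)|=12$ then $M\cong K_1(a,b)$ for relatively prime positive integers $a,b$ with $(a,b)\notin\{(1,1),(1,2),(4,3)\}$, and $S(K_1(a,b))=\{\tau_{x_1},\tau_{x_2},\tau_{x_1+x_2}\}$ with $Q(x_1)=Q(x_2)=Q(x_1+x_2)=2a$. Stability forces $a=1$: since $dK_1(a,b)=a^2(3b-2a)$ and $\gcd(a,3b-2a)=\gcd(a,3)$, any prime dividing $a$ would push the order of the discriminant at that prime to at least $2$. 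Hence every lattice in $\gen(K)$ with isometry group of order $12$ is isometric to $K_1(1,b)$ with $3b-2=dK$, so $b$ is forced to equal $\tfrac{dK+2}{3}$; this already gives $b_{12}(K)\le 1$, shows such a lattice can exist only when $dK\equiv 1\pmod 3$, and (for $dK>1$, when stability makes $b\ge 3$ so $K_1(1,b)$ avoids the three exceptional cases) pins its label to $\lb 12;2,2,2\rb$.

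What remains is to decide for which stable $K$ the lattice $K_1\!\left(1,\tfrac{dK+2}{3}\right)$ lies in $\gen(K)$; this is local, and since that lattice and $K$ have the same discriminant by construction, at each place it is enough to compare Hasse symbols (equivalently, anisotropy over $\q_q$). At an odd prime $q\nmid dK$ both completions are unimodular ternary $\z_q$-lattices of discriminant $dK$, hence isometric; in particular nothing is imposed at $q=3$, because the existence of a valid integer $b$ already forces $dK\equiv 1\pmod 3$. At $q=2$ I would use the fact from Section~\ref{isometry} that $K_1(1,b)$ contains $\plane\perp\langle 3\,dK\rangle$ with odd index $3$, so $K_1(1,b)_2\cong(\plane\perp\langle 3\,dK\rangle)_2$. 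If $K$ is even this is exactly the form $K_2$ described in Section~\ref{stable}, so the condition at $2$ is automatic; if $K$ is odd then $dK$ is odd, and since every value of $\plane$ over $\q_2$ has odd $2$-adic valuation, $\plane$ cannot represent the odd class $-3\,dK$, so $(\plane\perp\langle 3\,dK\rangle)_2$ is anisotropic. Thus for odd $K$ one needs $K_2$ anisotropic, which for an odd stable lattice is precisely the case $S_2(K)=1$ (just as $M(b)_2$ anisotropic means $S_2(M(b))=1$ in the proof of Lemma~\ref{24case}); this is the first clause $K$ odd and $S_2(K)=-1\Rightarrow b_{12}(K)=0$.

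The decisive places are the odd primes $q\mid dK$, where $\ord_q(dK)=1$ by stability. A short computation---split off $\z_q x_1$ of unit norm $2$, then a second unit-norm vector such as $x_1+2x_3$ inside $x_1^{\perp}$, and use $b\equiv 2\cdot 3^{-1}\pmod q$---shows that the binary unimodular Jordan component of $K_1(1,b)_q$ lies in the square class of $3$; hence $K_1(1,b)$ is anisotropic at $q$ exactly when $\left(\frac{-3}{q}\right)=-1$. Since for a stable lattice the isometry class of $K_q$ at an odd $q\mid dK$ is determined by $dK$ together with its anisotropy over $\q_q$, the requirement $K_1(1,b)_q\cong K_q$ for all $q\mid\mathfrak P\mathfrak Q$ becomes $\left(\frac{-3}{q}\right)=-1$ for $q\mid\mathfrak P$ and $\left(\frac{-3}{q}\right)=1$ for $q\mid\mathfrak Q$, i.e.\ $\Phi_K(3)\ne 0$; when it holds, $\Phi_K(3)=2^{\nu(\mathfrak P\mathfrak Q)}$, so $\Phi_K(3)/2^{\nu(\mathfrak P\mathfrak Q)}=1$, matching the claimed formula.

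The last point, and the one I expect to be the main obstacle, is the bookkeeping that makes the factor $1-e_3(dK)$ do exactly the right job. When $3\mid dK$ there is no integer $b$ with $3b-2=dK$ and $e_3(dK)=1$ kills the formula; when $dK\equiv 2\pmod 3$ there is again no such $b$, and one must show $\Phi_K(3)=0$. For this I would combine $\left(\frac{-3}{q}\right)=\left(\frac{q}{3}\right)$ for odd $q\ne 3$ with Hilbert reciprocity and the fact that a positive definite ternary form is anisotropic at an odd number of finite places: if $\Phi_K(3)\ne 0$ then every $q\mid\mathfrak P$ is $\equiv 2$ and every $q\mid\mathfrak Q$ is $\equiv 1\pmod 3$, so $dK\equiv(-1)^{e_2(dK)+\nu(\mathfrak P)}\pmod 3$, and the parity count on anisotropic places (excluding the case $K$ odd with $K_2$ isotropic, already covered by the first clause) forces $e_2(dK)+\nu(\mathfrak P)$ even, whence $dK\equiv 1\pmod 3$. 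This contradiction shows $\Phi_K(3)=0$ whenever $3\nmid dK$ and $dK\not\equiv 1\pmod 3$, so the displayed expression vanishes precisely when no valid $b$ exists, completing the proof.
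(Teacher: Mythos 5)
Your proposal is correct and follows essentially the same route as the paper: stability of the discriminant forces $M\cong K_1(1,b)$ with $3b-2=dK$ (whence uniqueness and the label $\lb 12;2,2,2\rb$), the membership of $K_1(1,b)$ in $\gen(K)$ is checked place by place using the fact that $K_1(1,b)_2\cong(\plane\perp\langle 3\,dK\rangle)_2$ and that the unimodular binary Jordan component at an odd $q\mid dK$ has discriminant in the square class of $3$, and the consistency of the factor $1-e_3(dK)$ with $\Phi_K(3)$ is established via quadratic reciprocity together with the parity of the set of anisotropic places. You merely spell out the local verifications that the paper compresses into ``it is direct to check,'' so no substantive difference.
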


\begin{proof}
From Section \ref{isometry}, a stable lattice whose isometry group is of order $12$ must be of the form
$$
K_1(a,b) =\begin{pmatrix} 2a&-a&-a\\-a&2a&0\\-a&-a&b\end{pmatrix}
$$
for some positive integers $a$ and $b$.  Its discriminant is $a^2(3b - 2a)$, which should be squarefree.  Therefore, $a = 1$ and the discriminant is $3b - 2 \equiv 1$ mod 3.  Note that $K_1(1,b)_2 \cong \plane \perp \langle 3(3b - 2) \rangle$.  This, in particular, shows that if $K_1(1,b)_2$ is odd, then it is anisotropic and its Hasse invariant is 1.  The label of any one of these lattice is $\lb 12; 2,2,2\rb$.

Suppose that $K$ is even.  It is clear that $\gen(K)$ does not contain any $K_1(1,b)$ when $(1 - e_3(dK))\Phi_K(3) = 0$.  Now, suppose that $e_3(dK) \neq 1$ and $\Phi_K(3) \neq 0$.   Then $3\nmid dK$ and
$$\left( \frac{-3}{q} \right) = \begin{cases}
-1 & \mbox{ if $q \mid \mathfrak P$},\\
1 & \mbox{ if $q \mid \mathfrak Q$}.
\end{cases}$$
By the Quadratic Reciprocity, $\left (\frac{dK}{3} \right ) = (-1)^{\vert \mathfrak P \vert + 1}$.  Since $K_2$ is always isotropic, it follows that $\vert P \vert$ must be even, and hence $dK \equiv 1$ mod 3.  Thus, there exists $b_1$ such that $M: = K_1(1, b_1)$ has discriminant $dK$.  It is direct to check that $M$ is in $\gen(K)$.

The proof of the case when $K$ is odd is similar, and we leave it to the readers.
\end{proof}

\begin{lem} \label{16case} Up to isometry, there is at most one lattice in $\gen(K)$ whose isometry group is of order $16$, and its label is $\lb 16; 1,1,2,2,dK \rb$.  Furthermore,
$$b_{16}(K)=\frac{1-e_2(dK)}{2^{\nu(\mathfrak P\mathfrak Q)}}\Phi_K(1).$$
\end{lem}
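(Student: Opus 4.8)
The plan is to pin down the single lattice that could contribute to $b_{16}(K)$ and then decide exactly when it lies in $\gen(K)$. First I would use that stability is a genus invariant, so every lattice in $\gen(K)$ is stable and in particular has squarefree discriminant $dK$. By the classification of ternary lattices with isometry group of order $16$ in Section \ref{isometry}, such a lattice is isometric to some $K_3(a,b)$ or $K_4(a,b)$ and is not one of $\bI$, $\bA$, $\mathbf J$. Now $dK_4(a,b) = 4a^2(b-a)$ is divisible by $4$, so no $K_4(a,b)$ is stable, while $dK_3(a,b) = a^2b$ is squarefree only for $a = 1$, where $K_3(1,b)\cong\langle 1,1,b\rangle$. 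Hence every $M \in \gen(K)$ with $\vert O(M)\vert = 16$ satisfies $M \cong \langle 1,1,b\rangle$ with $b = dM = dK$; in particular $dK$ is odd and squarefree, so $K$ is odd, $M$ is unique up to isometry, and its label is $\lb 16;1,1,2,2,dK\rb$, read off from $S(K_3(1,dK))$ in Section \ref{isometry}. (Throughout I assume $K\not\cong\bI$; if $dK=1$ then $\gen(K)$ is the single class $[\bI]$.) Thus $b_{16}(K)\in\{0,1\}$, and $b_{16}(K)=1$ if and only if $\langle 1,1,dK\rangle\in\gen(K)$.

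Next I would settle the even case at once: if $K$ is even then $\ord_2(dK)=1$, so $e_2(dK)=1$ and the right-hand side vanishes, while $\langle 1,1,dK\rangle$, being odd, cannot lie in the even genus $\gen(K)$, so $b_{16}(K)=0$ as well. So assume $K$ is odd, whence $dK$ is odd and squarefree and $e_2(dK)=0$; it remains to prove $b_{16}(K)=2^{-\nu(\mathfrak P\mathfrak Q)}\Phi_K(1)$. Both $\langle 1,1,dK\rangle$ and $K$ are stable of discriminant $dK$, so by the criterion recalled at the beginning of Section \ref{stable}, $\langle 1,1,dK\rangle\in\gen(K)$ if and only if $S_q(\langle 1,1,dK\rangle)=S_q(K)$ for every prime $q$. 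This is automatic for $q\nmid 2dK$, and for $q=2$ it follows from Hilbert reciprocity $\prod_v S_v=1$ once agreement holds at the real place (both forms being positive definite) and at every odd $q\mid dK$. So everything reduces to the odd prime divisors of $dK$.

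Fix an odd prime $q\mid dK$ and write $dK=q\epsilon$ with $\epsilon\in\z_q^\times$. Then $\langle 1,1,dK\rangle_q\cong\langle 1,1,q\epsilon\rangle$ has Hasse symbol $(1,1)_q(1,q\epsilon)_q(1,q\epsilon)_q=1$, so by the standard criterion that a ternary space over $\q_q$ with Hasse symbol $c$ and determinant $d$ is isotropic if and only if $c=(-1,-d)_q$, the space $\langle 1,1,dK\rangle_q$ is isotropic precisely when $(-1,-q\epsilon)_q=(-1,q)_q=\left(\frac{-1}{q}\right)$ equals $1$, independently of $\epsilon$. On the other hand $K_q$ is isotropic precisely when $q\mid\mathfrak Q$, by definition of $\mathfrak Q$. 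Since for odd $q$ there are exactly two ternary $\q_q$-spaces of determinant $dK$, one isotropic and one anisotropic, the criterion of Section \ref{stable} shows $\langle 1,1,dK\rangle_q\cong K_q$ if and only if the two have the same isotropy type, i.e. if and only if either $q\mid\mathfrak Q$ and $\left(\frac{-1}{q}\right)=1$, or $q\mid\mathfrak P$ and $\left(\frac{-1}{q}\right)=-1$.

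Finally I would compare with $\Phi_K(1)=\prod_{q\mid\mathfrak P}\bigl(1-(\tfrac{-1}{q})\bigr)\prod_{q\mid\mathfrak Q}\bigl(1+(\tfrac{-1}{q})\bigr)$: the $\mathfrak P$-factor at $q$ equals $2$ when $\left(\frac{-1}{q}\right)=-1$ and $0$ otherwise, and the $\mathfrak Q$-factor at $q$ equals $2$ when $\left(\frac{-1}{q}\right)=1$ and $0$ otherwise. Hence $\Phi_K(1)=2^{\nu(\mathfrak P\mathfrak Q)}$ exactly when the local condition of the previous paragraph holds at every odd $q\mid dK$, i.e. exactly when $\langle 1,1,dK\rangle\in\gen(K)$, and $\Phi_K(1)=0$ otherwise; so $2^{-\nu(\mathfrak P\mathfrak Q)}\Phi_K(1)$ equals $1$ or $0$ accordingly, which together with the second paragraph is the claimed formula. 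I expect the main obstacle to be the handling of the prime $2$: one must argue that agreement of the local structures there is not an independent requirement but is forced, through Hilbert reciprocity, by agreement at the odd prime divisors of $dK$ and at the real place — and, relatedly, keep straight that a stable ternary lattice at an odd prime $q$ is determined over $\z_q$ by its discriminant together with the isotropy type of its ambient space.
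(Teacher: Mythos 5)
Your proposal is correct and follows the paper's route: the classification of order-$16$ lattices from Section \ref{isometry} plus the discriminant computations $d(K_3(a,b))=a^2b$ and $d(K_4(a,b))=4a^2(b-a)$ force $M\cong K_3(1,dK)=\langle 1,1,dK\rangle$, after which membership in $\gen(K)$ is decided locally exactly as in the paper's Lemmas \ref{24case}, \ref{12case}, and \ref{8case} (odd primes via isotropy type, the prime $2$ via Hilbert reciprocity). You have simply written out in full the local analysis that the paper compresses into ``we may now proceed as in the proofs of the previous two lemmas.''
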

\begin{proof} Note that any ternary lattice with an isometry group of order 16 is isometric to
$$K_3(a,b)=\langle a,a,b\rangle \quad \text{or} \quad K_4(a,b)= \begin{pmatrix}2a&0&-a\\0&2a&-a\\-a&-a&b\end{pmatrix},$$
for some suitable integers $a,b$.  Note that $d(K_3(a,b))=a^2b$ and $d(K_4(a,b))=4a^2(b-a)$.  Thus, if $M$ is a stable lattice with $\vert O(M)\vert = 16$, then $M \cong K_3(1,b)$ with $b > 1$.  We may now proceed as in the proofs of the previous two lemmas.
\end{proof}

Let $M$ be a ternary lattice with $\vert O(M)\vert =8$.  Then the symmetries in $O(M)$ form an orthogonal system $\{\tau_{z_1}, \tau_{z_2}, \tau_{z_3}\}$, with primitive mutually orthogonal vectors $z_1, z_2, z_3$.  Let $L$ be the sublattice spanned by $z_1, z_2, z_3$.  By Lemma \ref{trivial}, $M_p = L_p$ for all odd primes $p$.  In addition, it is direct to check that $2v \in L$ for every $v \in M$.  Therefore, $M$ is obtained from $L$ by adjoining one or more vectors of the form $\frac{\epsilon_1z_1 + \epsilon_2z_2 + \epsilon_3z_3}{2}$, $\epsilon_i = 0$ or 1 for each $i$.  As a result, $M$ is isometric to one of the following lattices
$$M_1(a,b,c)=\langle a,b,c\rangle, \  \ M_2(a,b,c)=\langle a\rangle \perp \begin{pmatrix} \frac{b+c}2&\frac{b-c}2\\
 \frac{b-c}2&\frac{b+c}2\end{pmatrix}$$
or
$$M_3(a,b,c)=\begin{pmatrix}2a&0&a\\0&2b&b\\a&a&\frac{a+b+c}2\end{pmatrix}, \  \ M_4(a,b,c)=\begin{pmatrix}4a&2a&2a\\2a&a+b&a\\2a&a&a+c\end{pmatrix},$$
for some suitable integers $a, b$, and $c$.  A simple calculation shows that the discriminants of $M_3(a,b,c)$ and $M_4(a,b,c)$ are divisible by 4.  Thus, if $M$ is stable, then $M \cong M_1(a,b,c)$ or $M_2(a,b,c)$ for some suitable positive integers $a, b, c$.

For the convenience of discussion, let $\mathfrak T$ be the set of triples $(a,b, c)$ of positive integers such that $abc = dK$, and we define
\begin{eqnarray*}
\mathfrak R_1 & = &\{(a,b,c)\in \mathfrak T:  a > b > c \},\\
\mathfrak R_2 & = & \{(a,b,c) \in \mathfrak T:  b > c, (b,c) \neq (3,1), a \equiv 2 \mbox{ and } 4, \mbox{ and } bc \equiv 3 \mbox{ and } 4\},\\
\mathfrak R_3 & = & \{(a,b,c) \in \mathfrak T: b > c, \mbox{ and } (b,c) \neq (3,1)\}.
\end{eqnarray*}

\begin{lem} \label{8case} If $K$ is even, then
$$b_8(K)=\displaystyle \sum_{(a,b,c) \in \mathfrak R_2} \frac{1}{2^{\nu(\mathfrak P\mathfrak Q)}}\Phi_K(a,2b,2c),$$
and if $K$ is odd, then
$$b_8(K)=\displaystyle \sum_{(a,b,c) \in \mathfrak R_1} \frac{1}{2^{\nu(\mathfrak P\mathfrak Q)}}\Phi_K(a,b,c) + \displaystyle \sum_{(a,b,c) \in \mathfrak R_3} \frac{1}{2^{\nu(\mathfrak P\mathfrak Q)}}\Phi_K(a,2b,2c).$$
\end{lem}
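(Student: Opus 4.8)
The plan is to follow the strategy of Lemmas \ref{24case}, \ref{12case} and \ref{16case}: first identify which stable lattices have isometry group of order exactly $8$; then fix a parametrisation so that each isometry class in $\gen(K)$ is counted once; and finally turn the condition ``$M\in\gen(K)$'' into an explicit arithmetic condition on the parameters, which is what $\Phi_K$ records.

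First I would refine the discussion preceding the lemma. A stable lattice $M$ with $\vert O(M)\vert=8$ has an orthogonal system $\{\tau_{z_1},\tau_{z_2},\tau_{z_3}\}$, hence $M\cong M_1(a,b,c)$ or $M_2(a,b,c)$ with $abc=dK$ (the cases $M_3,M_4$ being excluded since $dM=dK$ is squarefree but $dM_3(a,b,c),dM_4(a,b,c)$ are divisible by $4$). I would then pin down when the isometry group is $8$ and not larger: $M_1(a,b,c)=\langle a,b,c\rangle$ has $\vert O\vert=8$ iff $a,b,c$ are distinct (a repeated entry lands us in $K_3(\cdot,\cdot)$ or in $\bI$), which is why one normalises to $a>b>c$; and $M_2(a,b,c)$ has $\vert O\vert=8$ iff its binary block is neither a square nor a hexagonal lattice, i.e.\ (using that $bc$ is squarefree, so $\det$ of that block is $1$ or $3$ only in the degenerate cases) iff $(b,c)\neq(3,1)$, with $b>c$ chosen to fix the splitting of the block. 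One must also track parity: $M_1(a,b,c)$ is always odd, whereas $M_2(a,b,c)$ is even precisely when $a$ is even and $bc\equiv3\pmod4$, which forces $a\equiv2\pmod4$ because $dK$ is squarefree. These observations are exactly what distinguishes $\mathfrak{R}_1,\mathfrak{R}_2,\mathfrak{R}_3$ and matches $M_1$ (resp.\ $M_2$) to an odd (resp.\ even) stable $K$. Finally I would check that distinct admissible triples yield non-isometric lattices: the triple is recoverable from the label $\lb 8;\dots\rb$ together with the orthogonal system, and, exactly as in the earlier lemmas, each admissible $M_i(a,b,c)$ is the unique class of its label inside its own genus.

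Next I would localise. For every odd prime $q$ one has $M_1(a,b,c)_q=\langle a,b,c\rangle_q$, and $M_2(a,b,c)_q\cong\langle a,2b,2c\rangle_q$ because the vectors of norms $2b$ and $2c$ span a sublattice of index $2$ in the binary block, hence all of it over $\z_q$. Thus membership of $M_i(a,b,c)$ in $\gen(K)$ reduces to comparing a diagonal ternary form with $K_q$ at the odd primes $q\mid dK$ only: at primes $q\nmid dK$ both completions are the unique unimodular form of the right determinant, while the conditions at $q=2$ and at $\infty$ are then forced — once the parity and definiteness of the diagonal form are arranged to match $K$, which the defining conditions of $\mathfrak{R}_i$ ensure — by the product formula $\prod_v S_v=1$ for Hasse symbols. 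I would then carry out, by cases on which of $a,b,c$ is divisible by $q$, the standard computation expressing ``$\langle\cdot,\cdot,\cdot\rangle_q\cong K_q$'' through the Legendre symbol of $-(\text{product of the two }q\text{-prime entries})$ and the Hasse symbol $S_q(K)$ — equivalently, through whether the unimodular Jordan component of $K_q$ is anisotropic mod $q$, i.e.\ $q\mid\mathfrak P$ versus $q\mid\mathfrak Q$. Assembling these, the single factor of $\Phi_K(a,b,c)$ or $\Phi_K(a,2b,2c)$ attached to $q$ is nonzero iff the completion at $q$ matches $K$, so that $\tfrac1{2^{\nu(\mathfrak P\mathfrak Q)}}\Phi_K(\cdot)\in\{0,1\}$ is precisely the indicator of ``$M_i(a,b,c)\in\gen(K)$''.

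Putting the pieces together: when $K$ is odd, every order-$8$ class is represented by a unique $M_1(a,b,c)$ with $(a,b,c)\in\mathfrak{R}_1$ lying in $\gen(K)$, or by a unique $M_2(a,b,c)$ with $(a,b,c)\in\mathfrak{R}_3$, giving $b_8(K)=\sum_{\mathfrak{R}_1}\tfrac1{2^{\nu}}\Phi_K(a,b,c)+\sum_{\mathfrak{R}_3}\tfrac1{2^{\nu}}\Phi_K(a,2b,2c)$; when $K$ is even, $M_1$ never occurs and only $M_2(a,b,c)$ with $(a,b,c)\in\mathfrak{R}_2$ can, giving $b_8(K)=\sum_{\mathfrak{R}_2}\tfrac1{2^{\nu}}\Phi_K(a,2b,2c)$. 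The step I expect to be hardest is the local-to-global bookkeeping in the previous paragraph: verifying that the $2$-adic and archimedean conditions genuinely are automatic (so that $\Phi_K$, which only sees odd primes dividing $dK$, really suffices) and getting every sign correct when translating the $q$-adic isometry condition into the Legendre-symbol factors of $\Phi_K$. The combinatorial normalisation of the first paragraph — which triples give $\vert O\vert=8$, and no double counting — is fussy but routine.
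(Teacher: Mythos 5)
Your proposal is correct and takes essentially the same route as the paper's proof: classify the stable order-$8$ lattices as $M_1(a,b,c)$ or $M_2(a,b,c)$ with the triples normalised into $\mathfrak R_1$, $\mathfrak R_2$, $\mathfrak R_3$, translate ``$M\in\gen(K)$'' into local conditions at the odd primes dividing $dK$ so that $2^{-\nu(\mathfrak P\mathfrak Q)}\Phi_K(\cdot)$ becomes the indicator of that membership, and settle the prime $2$ by Hilbert reciprocity (the paper writes out only the even case and leaves the odd case, which you handle, to the reader). One incidental payoff of the sign bookkeeping you single out as the delicate step: calibrated against $\Phi_K(\alpha)$ and Lemma \ref{24case}, the factor attached to $q\mid\mathfrak P$ must be $1-\left(\frac{-\beta\gamma}{q}\right)$ and that attached to $q\mid\mathfrak Q$ must be $1+\left(\frac{-\beta\gamma}{q}\right)$, so the two products in the printed definition of $\Phi_K(\alpha,\beta,\gamma)$ appear to be interchanged.
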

\begin{proof}
Suppose that there is a lattice $M$ in $\gen(K)$ such that $\vert O(M)\vert =8$.  We first assume that $K$ is even.  Then $M \cong M_2(a,b,c)$  for a unique triple $(a,b,c)\in \mathfrak R_2$.  Since $M_q \cong \langle a,2b,2c\rangle$ for any prime $q$ dividing $\mathfrak{PQ}$, therefore
$$2^{\nu(\mathfrak{PQ})}= \Phi_K(a,2b,2c).$$

Conversely, suppose that $(a,b,c) \in \mathfrak R_2$ and $\Phi_K(a,b,c) = 2^{\nu(\mathfrak{PQ})}$, and let $M$ be $M_2(a,b,c)$.  Then $K_q \cong \langle a,2b,2c\rangle \cong M_q$ for every prime $q$ dividing $\mathfrak{PQ}$.  By the Hilbert Reciprocity, $K_2$ is also isometric to $M_2$, hence $M \in \gen(K)$.

The proof of the case when $K$ is odd is similar, and we leave it to the readers.
\end{proof}

\begin{rmk}
The label of any $M \in \gen(K)$ with $\vert O(M) \vert = 8$ can be determined.  For, if $M \cong M_1(a,b,c)$ with $(a,b,c) \in \mathfrak R_1$, then the label of $M$ is $\lb 8; c, b, a \rb$.  On the other hand, $M \cong M_2(a,b,c)$ with $(a,b,c) \in \mathfrak R_2$ or $\mathfrak R_3$, then $Q_M(\sigma) = a, 2b$, or $2c$ for any $\sigma \in S(M)$, and hence the label of $M$ can also be determined.
\end{rmk}


Let $\tau_x$ be a symmetry of $K$, where $x$ is a primitive vector in $K$.  If $Q(x)$ is odd, then $\z x$ splits $K$ by Lemma \ref{trivial}. If $Q(x)=2m$ for some integer $m$, then either $\z x$ splits $K$ or there is a basis $x=x_1, x_2,x_3$ of $K$ such that
\begin{equation} \label{matrixm}
(B(x_i,x_j))=\begin{pmatrix} 2m&m&0\\m&Q(x_2)&B(x_2,x_3)\\0&B(x_2,x_3)&Q(x_3)\end{pmatrix}.
\end{equation}
Therefore $m$ divides $\mathfrak{PQ}$.

From now on, $\delta$ is either $1$ or $2$.  For any integer $t$ and lattice $L$, $r(t, L)$ denotes the number of representations of $t$ by $L$.

\begin{lem} \label{keylem} Let $m$ be a positive odd squarefree integer.  There exists $\tau_x \in O(K)$ such that $Q(x)=\delta m$ if and only if $\delta$ is represented by $\lambda_m(K)$. Furthermore
$$
\big \vert \{ \tau_x \in O(K) : x \in K, \ Q(x)=\delta m\}\big\vert = \frac{1}{2}r(\delta,\lambda_m(K)).
$$
\end{lem}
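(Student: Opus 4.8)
The plan is to convert the condition that $\tau_x$ lies in $O(K)$ into a divisibility condition on $x$, recognize that condition as membership in $\Lambda_m(K)$, and then read the count off from the representation numbers of $\lambda_m(K)$.

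First I would record the reflection formula: for a primitive $x \in K$ one has $\tau_x(y) = y - \tfrac{2B(x,y)}{Q(x)}x$, and, since $x$ is primitive, $\tfrac{2B(x,y)}{Q(x)}x \in K$ holds exactly when $\tfrac{2B(x,y)}{Q(x)} \in \z$. Thus $\tau_x \in O(K)$ if and only if $Q(x) \mid 2B(x,y)$ for all $y \in K$. When $Q(x) = \delta m$ with $\delta \in \{1,2\}$ and $m$ odd, the cases $\delta = 1$ and $\delta = 2$ both reduce this to $m \mid B(x,y)$ for every $y \in K$, which in turn forces $m \mid B(x,x) = Q(x)$. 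Since $Q(x+y) - Q(y) = Q(x) + 2B(x,y)$ and $m$ is odd, this last condition is precisely $x \in \Lambda_m(K)$. Hence among primitive vectors $x$ of $K$ with $Q(x) = \delta m$, the ones yielding a symmetry of $K$ are exactly those lying in $\Lambda_m(K)$.

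Next I would observe that every $x \in K$ with $Q(x) = \delta m$ is automatically primitive: if $x = k x'$ with $x' \in K$ then $k^2 \mid \delta m$, and since $m$ is odd and squarefree, $\delta m$ is squarefree when $\delta = 1$ and twice an odd squarefree integer when $\delta = 2$, so $k = \pm 1$. Thus $\tau_x$ is well defined for every such $x$, and two of them give the same symmetry precisely when they differ by a sign. Combining with the previous paragraph, the number of symmetries $\tau_x \in O(K)$ with $Q(x) = \delta m$ equals $\tfrac12\,\bigl\vert\{x \in \Lambda_m(K) : Q(x) = \delta m\}\bigr\vert$.

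It remains to match this with $\tfrac12 r(\delta, \lambda_m(K))$. Here I would invoke Lemma \ref{lambda}: for each prime $q \mid m$ write $K_q = M_q \perp M'_q$ with $M_q$ unimodular and $\mathfrak n(M'_q) \subseteq q\z_q$; primitivity of $K$ forces $M_q$ to have positive rank, so $\Lambda_m(K)_q = \Lambda_q(K)_q = qM_q \perp M'_q$ has scale $q\z_q$, whereas $\Lambda_m(K)_q = K_q$ has scale $\z_q$ for the remaining $q$. As $m$ is squarefree this yields $\mathfrak s(\Lambda_m(K)) = m\z$, and as $m$ is odd the primitive rescaling defining $\lambda_m(K)$ is just $Q \mapsto \tfrac1m Q$; so $\lambda_m(K)$ and $\Lambda_m(K)$ have a common underlying module with $Q_{\lambda_m(K)}(x) = \delta \iff Q(x) = \delta m$, whence $r(\delta, \lambda_m(K)) = \vert\{x \in \Lambda_m(K) : Q(x) = \delta m\}\vert$. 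This gives the displayed equality, and since $r(\delta, \lambda_m(K))$ is even (representations pair up as $x, -x$, distinct because $Q(x) = \delta \neq 0$), it is positive exactly when a symmetry with $Q(x) = \delta m$ exists, which is the asserted equivalence. The step that needs genuine care is the identification $\lambda_m(K) \cong \Lambda_m(K)^{1/m}$, i.e.\ that the rescaling factor is exactly $m$: this is where the squarefreeness and oddness of $m$ and the primitivity of $K$ all genuinely enter. The reflection computation and the automatic-primitivity observation are routine.
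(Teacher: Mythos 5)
Your argument is essentially correct in structure but takes a genuinely different route from the paper's, and the difference is worth noting. The paper proves the equivalence by a two-way construction with case analysis: in the forward direction it distinguishes whether $\z x$ splits $K$ and computes $\lambda_m(K)$ explicitly in each case; in the converse direction it starts from a vector of $\lambda_m(K)$ representing $\delta$, computes $\Lambda_m(\lambda_m(K))$ by hand, and invokes $\lambda_m^2(K) \cong K$ to recover the symmetry $\tau_{mx_1}$. The counting formula is left implicit in that correspondence. Your route --- characterizing $\tau_x \in O(K)$ for a vector with $Q(x)=\delta m$ by the single condition $x \in \Lambda_m(K)$, observing that all such vectors are primitive, and then identifying $\Lambda_m(K)$ with $\lambda_m(K)$ as $\z$-modules with the form divided by $m$ --- produces a direct two-to-one map between vectors and symmetries, so it delivers the displayed equality and not merely the existence statement. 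That is a genuine gain in transparency over the paper's case analysis.

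One step does need repair. You justify $\scale(\Lambda_m(K)_q) = q\z_q$ for $q \mid m$ by saying that primitivity of $K$ forces the unimodular component $M_q$ to have positive rank. That premise does not give the conclusion: if $M'_q = 0$, i.e.\ $q \nmid dK$, then $\Lambda_q(K_q) = qM_q$ has scale $q^2\z_q$, the content of $\Lambda_m(K)$ exceeds $m$, and the identification $r(\delta,\lambda_m(K)) = \vert\{x\in\Lambda_m(K): Q(x)=\delta m\}\vert$ breaks down. Indeed the lemma as literally stated fails in that situation: for $K = \bI$ and $m=3$ one has $\Lambda_3(\bI)=3\bI$, hence $\lambda_3(\bI) = \bI$, which represents $1$, yet $\bI$ has no symmetry $\tau_x$ with $Q(x)=3$. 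What you actually need is that every prime divisor $q$ of $m$ divides $dK$, so that stability gives $\ord_q(dK)=1$, $M'_q$ has rank exactly one and scale $q\z_q$, and the total rescaling factor is exactly $m$. This is the standing hypothesis $m \mid \mathfrak P\mathfrak Q$ of this section --- the paragraph preceding the lemma shows that any symmetry with $Q(x)=\delta m$ forces $m \mid \mathfrak P\mathfrak Q$, and Theorem \ref{maint} only sums over such $m$ --- and the paper's own proof uses it equally tacitly (``then $\ord_q(dK)=1$''). State and use $m \mid \mathfrak P\mathfrak Q$ explicitly at this point and the rest of your argument goes through.
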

\begin{proof} Suppose that $\tau_x \in O(K)$ and $Q(x)=\delta m$. If $\z x$ splits $K$, then $K =\langle \delta m\rangle \perp \widetilde{K}$ for some binary sublattice $\widetilde{K}$ of $K$. Since $\gcd(m, d\widetilde{K})=1$, $\Lambda_m(K)=\langle \delta m\rangle \perp m\widetilde{K}$ and hence
$\lambda_m(K)=\langle \delta \rangle \perp \widetilde{K}^m.$  On the other hand, if $\z x$ does not split $K$, then clearly $\delta=2$, and there is a basis $x_1, x_2, x_3$ of $K$ satisfying (\ref{matrixm}).  In this case,
$$\lambda_m(K) \cong \begin{pmatrix} 2&m&0\\m&mQ(x_2)&mB(x_2,x_3)\\0&mB(x_2,x_3)&mQ(x_3)\end{pmatrix},$$
which clearly represents 2.

Conversely, suppose that $\delta$ is represented by $\lambda_m(K)$.  We assume that $\langle \delta \rangle$ does not split $\lambda_m(K)$; the other case can be done similarly. Thus, $\delta=2$ and there is a basis $x_1, x_2, x_3$ of $\lambda_m(K)$ such that
$$
(B(x_i,x_j))=\begin{pmatrix} 2&1&0\\1&a&b\\0&b&c\end{pmatrix},
$$
for some integers $a,b,c$. If $q$ is a prime dividing $m$, then $\ord_q(dK) = 1$ and hence $\lambda_m(K)_q \cong \langle \epsilon_1,q\epsilon_2,q\epsilon_3\rangle$, where $\epsilon_i \in \z_q^{\times}$ for every $i$, by Lemma \ref{lambda}.  So, whenever $q$ is a prime divisor of $m$, $\z_q mx_1$ would be an orthogonal summand of $\Lambda_m(\lambda_m(K))_q$.  Consequently,
$$\Lambda_m(\lambda_m(K))_q = \begin{cases}
\lambda_m(K)_q & \mbox{ if $q\nmid m$},\\
\z_q mx_1\perp (\z_q (x_1 - 2x_2) + \z_q x_3) & \mbox{ if $q \mid m$}.
\end{cases}$$
This implies
\begin{eqnarray*}
\Lambda_m(\lambda_m(K)) & = & \z (mx_1)+\z\left(\frac{m-1}{2}x_1+x_2\right)+\z x_3 \\
    & \cong &  \begin{pmatrix} 2m^2 & m^2 & 0\\ m^2 & \frac{m^2-1}{2} + a & b\\ 0 & b & c \end{pmatrix}.
\end{eqnarray*}
However, since $m$ is squarefree and odd, $K \cong \lambda_m^2(K)$ by Lemma \ref{lambda}, and the latter is  $\Lambda_m(\lambda_m(K))^{\frac{1}{m}}$. It is easy to see that $\tau_{mx_1}$ is a symmetry in $O(K)$ with $Q(mx_1) = 2m$ (note that the quadratic form on $K$ is the one on $\lambda_m(K)$ scaled by $\frac{1}{m}$).
\end{proof}

\begin{lem} \label{compw}
The mass of $K$, $\mathfrak w(K)$, is equal to
$$\frac{\epsilon}{2^{\nu(\mathfrak{PQ})}}\displaystyle \prod_{p\mid \mathfrak P} (p-1)\prod_{p\mid \mathfrak Q} (p+1),$$
where $\epsilon=\frac{1}{16}, \frac{1}{48}$ or $\frac{1}{24}$ if $K_2$ is odd isotropic, odd anisotropic, or even, respectively.
\end{lem}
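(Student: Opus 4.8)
The plan is to derive the formula directly from the Minkowski--Siegel mass formula \cite[Theorem 6.8.1]{ki}, reducing everything to a finite product of ratios of local densities which are then evaluated via \cite[Theorem 5.6.3]{ki}. The first step is to fix the normalization: since the genus of positive definite unimodular ternary $\z$-lattices consists of the single class $[\bI]$, one has $\mathfrak w(\bI) = 1/\vert O(\bI)\vert = 1/48$. The mass formula writes $\mathfrak w(L)$, for any positive definite ternary $L$, as a constant (depending only on the rank $3$) times $\prod_q \alpha_q(L_q,L_q)^{-1}$; and for odd $q\nmid dL$ the local density $\alpha_q(L_q,L_q)$ depends only on the rank, because for odd rank a unimodular $\z_q$-lattice carries a quadratic form that is unique up to a unit scalar modulo $q$, which the orthogonal group does not see. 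Dividing the mass formulas for $K$ and for $\bI$ then yields the finite product
$$\mathfrak w(K) = \frac{1}{48}\prod_{q \mid 2\,dK}\frac{\alpha_q(\bI_q,\bI_q)}{\alpha_q(K_q,K_q)},$$
and the task becomes the evaluation of the local ratios.

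Next I would handle the odd primes $q\mid dK$. Since $K$ is stable, $\ord_q(dK)=1$, so by Lemma \ref{lambda} we may write $K_q\cong\langle u_1,u_2,qu_3\rangle$ with $u_i\in\z_q^\times$, and by definition $q\mid\mathfrak P$ when $\left(\frac{-u_1u_2}{q}\right)=-1$ and $q\mid\mathfrak Q$ when $\left(\frac{-u_1u_2}{q}\right)=1$; equivalently the binary unimodular part $\langle u_1,u_2\rangle$ is anisotropic over $\q_q$ in the first case and isotropic in the second. Feeding this Jordan splitting, and the unimodular $\bI_q$, into \cite[Theorem 5.6.3]{ki} and simplifying, the ratio $\alpha_q(\bI_q,\bI_q)/\alpha_q(K_q,K_q)$ should come out to $(q-1)/2$ in the $\mathfrak P$ case and $(q+1)/2$ in the $\mathfrak Q$ case, the factor $q\mp1$ reflecting $\vert O(\langle u_1,u_2\rangle\bmod q)\vert$ and the $\tfrac12$ the cross-term between the unimodular block and the $q$-modular block $\langle qu_3\rangle$.

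The last and hardest step is the prime $q=2$, which must be split into the three cases of the statement. Here I would use the explicit $2$-adic shapes recorded just before Lemma \ref{24case}, namely $K_2\cong\plane\perp\langle 3\,dK\rangle$ when $K_2$ is anisotropic and $K_2\cong\h\perp\langle -dK\rangle$ otherwise, with $K$ even precisely when $e_2(dK)=1$ (so in the odd case $K_2$ is unimodular of rank $3$). Computing $\alpha_2(K_2,K_2)$ from \cite[Theorem 5.6.3]{ki} in each of the three cases and comparing with $\alpha_2(\bI_2,\bI_2)$, the ratio $\alpha_2(\bI_2,\bI_2)/\alpha_2(K_2,K_2)$ should equal $1$ when $K_2$ is odd anisotropic (indeed $K_2\cong\bI_2$ in that case), $3$ when $K_2$ is odd isotropic, and $2$ when $K$ is even. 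I expect this $2$-adic bookkeeping to be the main obstacle, since the local density formula at $2$ involves more Jordan cross-terms and parity/norm subtleties than at odd primes; a good check along the way is that $dK=1$ must return $1/48=\mathfrak w(\bI)$ and that $K_2(1,2)\cong\plane\perp\langle 2\rangle$ (even, $dK=6$, with the one odd ramified prime $3$) must return $1/24 = 1/\vert O(K_2(1,2))\vert$, consistent with its class number being $1$.

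Finally I would assemble the pieces. Writing $\rho_2\in\{1,3,2\}$ for the $2$-adic ratio and noting that $\mathfrak P$ and $\mathfrak Q$ have $\nu(\mathfrak{PQ})$ distinct prime divisors altogether, the displayed product becomes
$$\mathfrak w(K) = \frac{\rho_2}{48}\prod_{p\mid\mathfrak P}\frac{p-1}{2}\prod_{p\mid\mathfrak Q}\frac{p+1}{2} = \frac{\epsilon}{2^{\nu(\mathfrak{PQ})}}\prod_{p\mid\mathfrak P}(p-1)\prod_{p\mid\mathfrak Q}(p+1),$$
with $\epsilon=\rho_2/48$, that is, $\epsilon=\tfrac1{48},\tfrac1{16},\tfrac1{24}$ according as $K_2$ is odd anisotropic, odd isotropic, or $K$ is even, which is exactly the asserted formula.
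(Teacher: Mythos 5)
Your proposal follows exactly the route the paper takes---its entire proof of this lemma is the citation of the Minkowski--Siegel mass formula \cite[Theorem 6.8.1]{ki} together with the local density computation of \cite[Theorem 5.6.3]{ki}---and your local ratios at the odd ramified primes and at $2$, and the final assembly, reproduce the stated formula correctly. One bookkeeping caveat: in Kitaoka's normalization the quotient $\mathfrak w(K)/\mathfrak w(\bI)$ carries the discriminant power $(dK)^{2}$ coming from the archimedean density (equivalently, the $q^{w}$ factors in the non-unimodular local densities), which your displayed product omits; the values $\tfrac{q\mp 1}{2}$ you quote are really $q^{2}\cdot\alpha_q(\bI_q,\bI_q)/\alpha_q(K_q,K_q)$, so the two omissions cancel and the end result is right.
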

\begin{proof}
Then lemma follows directly from the Minkowski-Siegel mass formula. For the computation of local densities, see \cite[Theorem 5.6.3]{ki}.
\end{proof}

 Let $h_E$ be the class number of the quadratic field $E=\q(\sqrt{-\delta d(\lambda_m(K))})$, and $\mu_E$ be the number of roots of unity in $E$.

\begin{lem} \label{locald}
Suppose that $\delta$ is represented by $\gen(\lambda_m(K))$. Then
$$
\displaystyle \sum_{[K_i] \in \gen(\lambda_m(K))} \frac{r(\delta,K_i)}{\vert O(K_i)\vert} = t_{m,\delta}\cdot 2^{\nu(m)-\nu(\mathfrak{PQ})}\cdot \frac{h_E}{\mu_E},
$$
where  $t_{m,\delta}$ is given in Table V.
\begin{table} [ht]
\begin{centerline}{\tabcolsep=3pt
\begin{tabular}{|c|c|c||c|c|c|}
\hline \rule[-2mm]{0mm}{8mm}
$\lambda_m(K)_2$&$ \ \delta \ $&$t_{m,\delta}$&$\lambda_m(K)_2$&$ \ \delta \ $&$t_{m,\delta}$\\
\hline \hline
\rule[-2mm]{0mm}{6mm}  $\langle1,1,3\rangle$&$1$&$3$&$\langle3,3,3\rangle$&$1$&$1$ \\
\hline
\rule[-2mm]{0mm}{6mm}  $\langle1,1,7\rangle$&$1$&$2$&  $dK\equiv 1 \pmod 4$&$1$&$\frac{1}{2}$\\
\hline
\rule[-2mm]{0mm}{6mm}  $\plane\perp \langle2\rangle$&$2$&$4$&$\plane\perp \langle14\rangle$&$2$&$1$ \\
\hline
\rule[-2mm]{0mm}{6mm}  $\plane\perp \langle6\rangle$&$2$&$1$&$\plane\perp \langle10\rangle$&$2$&$2$ \\
\hline
\rule[-2mm]{0mm}{6mm}{\rm odd} &$2$&$\frac{1}{2}$&{}&{}&{} \\
\hline
\end{tabular}}
\end{centerline}
\vskip 0.2cm
\center{\rm Table V}
\end{table}
\end{lem}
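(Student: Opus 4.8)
The plan is to evaluate the left-hand side by Siegel's mass formula for representations of a vector by a genus --- the representation version of the Minkowski--Siegel formula already used for Table~I and for Lemma~\ref{compw} --- and then to identify the resulting product of local densities with a special value of a Dirichlet $L$-function by way of Dirichlet's class number formula. Write $N=\lambda_m(K)$. Since $K$ is stable and, as in the preceding lemmas, $m\mid\mathfrak{PQ}$ (so $\ord_q(dK)=1$ for every $q\mid m$), Lemma~\ref{lambda} gives $N_q\cong\langle\epsilon_1,q\epsilon_2,q\epsilon_3\rangle$ for $q\mid m$, while $N_q$ keeps its rank-one ramified Jordan block at the other odd primes $q\mid\mathfrak{PQ}$, and $N_2$ is just $K_2$ rescaled by the $2$-adic unit $m$. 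Hence $dN=m\,dK$, the odd prime divisors of $dN$ carry $\ord_q=2$ when $q\mid m$ and $\ord_q=1$ otherwise, and $E=\q(\sqrt{-\delta\,dN})=\q(\sqrt{-\delta\,dK/m})$ is an imaginary quadratic field. The hypothesis that $\delta$ is represented by $\gen(N)$ guarantees that $\delta$ is represented at every completion, so Siegel's formula applies and yields
$$\sum_{[K_i]\in\gen(N)}\frac{r(\delta,K_i)}{\vert O(K_i)\vert}=\w(N)\,\alpha_\infty(\delta,N)\prod_{q}\alpha_q(\delta,N),$$
with $\alpha_\infty(\delta,N)=2\pi\sqrt{\delta}/\sqrt{dN}$ the archimedean density and $\alpha_q(\delta,N)$ the $q$-adic representation density of $\delta$ by $N$.

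The proof then reduces to evaluating each factor, all via \cite[Theorem~5.6.3]{ki} applied to the Jordan splittings above. At a prime $q\mid m$ one has $N_q\cong\langle\epsilon_1,q\epsilon_2,q\epsilon_3\rangle$ and $\delta$ a $q$-adic unit; a direct count gives $\alpha_q(\delta,N)=2$, the value $2$ being forced precisely because $\delta$ is locally represented, so $\prod_{q\mid m}\alpha_q(\delta,N)=2^{\nu(m)}$ --- this is the source of the factor $2^{\nu(m)}$. At a prime $q\nmid 2\,dK$ the lattice $N_q$ is unimodular and $\alpha_q(\delta,N)=1+\bigl(\tfrac{-\delta\,dN}{q}\bigr)q^{-1}$, the generic Euler factor of the $L$-function of $E$. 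At an odd prime $q\mid\mathfrak{PQ}$ with $q\nmid m$, $N_q$ still carries a ramified block, and $\alpha_q(\delta,N)$ is a factor that pairs off, in the final product, with the $(q-1)$ or $(q+1)$ coming from $\w(N)$. The mass $\w(N)$ itself is computed exactly as in Lemma~\ref{compw} from the Minkowski--Siegel formula: it is a rational number of the same shape, with the primes $q\mid m$ (now of type $\ord_q=2$) and those dividing $\mathfrak{PQ}/m$ contributing explicit local factors, and with $\epsilon$ again determined by the isometry class of $K_2$. Finally one would evaluate $\alpha_2(\delta,N)$ case by case according to the isometry class of $N_2$ --- one of $\langle1,1,3\rangle$, $\langle1,1,7\rangle$, $\langle3,3,3\rangle$, $\plane\perp\langle2\rangle$, $\plane\perp\langle6\rangle$, $\plane\perp\langle10\rangle$, $\plane\perp\langle14\rangle$, or the generic odd type --- keeping track of whether $\langle\delta\rangle$ splits off $N_2$.

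Assembling everything, the densities $\alpha_q$ at the odd primes where $N$ is unimodular, together with $\zeta(2)^{-1}$, reassemble --- up to finitely many elementary factors and the Euler factor at $2$ omitted from that product --- into $L(1,\chi)$, where $\chi$ is the quadratic character of $E$, while the densities at the odd ramified primes pair off with the matching factors of $\w(N)$; Dirichlet's class number formula $L(1,\chi)=2\pi\,h_E/(\mu_E\sqrt{\vert d_E\vert})$ then cancels all powers of $\pi$ against $\alpha_\infty$ and $\zeta(2)^{-1}$ and makes $\sqrt{\vert d_E\vert}/\sqrt{dN}$ rational, since $d_E$ differs from $-\delta\,dN$ only by a power of $2$. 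What remains is exactly $t_{m,\delta}\cdot 2^{\nu(m)-\nu(\mathfrak{PQ})}\cdot h_E/\mu_E$: the $2^{\nu(m)}$ produced by the primes $q\mid m$ combines with the $2$-powers in $\w(N)$ to give the stated exponent $\nu(m)-\nu(\mathfrak{PQ})$, and the residual rational constant --- which encodes $\alpha_2(\delta,N)$, the numerical coefficient of $\alpha_\infty$, and the Euler factor at $2$ just mentioned --- is $t_{m,\delta}$. The main obstacle is exactly this $2$-adic bookkeeping, which is what produces Table~V: one must compute $\alpha_2(\delta,N)$ for each of the nine $2$-adic types of $N_2$, separately for $\delta=1$ and $\delta=2$ and according to whether $\langle\delta\rangle$ splits off $N_2$, and must reconcile the discriminant $-\delta\,dN$ with the true fundamental discriminant $d_E$ of $E$ --- this is the origin of the case split ``$dK\equiv1\pmod4$'' against the anisotropic rows, and of the larger values $3$, $2$, $4$ of $t_{m,\delta}$ for the anisotropic $2$-adic types. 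Everything at the odd primes and at infinity is routine once the Jordan decompositions of Lemma~\ref{lambda} are in hand.
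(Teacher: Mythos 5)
Your proposal is correct and follows essentially the same route as the paper's own (very brief) proof: apply the Minkowski--Siegel formula for representations of $\delta$ by $\gen(\lambda_m(K))$, identify the product of local densities at the unramified primes with $L(1,\chi_E)$ via Dirichlet's class number formula, and absorb the archimedean, dyadic, and ramified-prime contributions into the explicit constants -- the factor $2^{\nu(m)}$ coming from the density $\alpha_q(\delta)=2$ at each $q\mid m$, the factor $2^{-\nu(\mathfrak{PQ})}$ from the mass, and $t_{m,\delta}$ from the case-by-case evaluation of $\alpha_2$. The paper compresses all of this into a citation of the mass formula, the identity $\frac{h_E}{\mu_E}=(2\pi)^{-1}\vert d_E\vert^{1/2}L(1,(\tfrac{d_E}{\cdot}))$, and a reference to Yang's local density formulas, so your write-up is, if anything, more explicit about where each piece of the right-hand side originates.
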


\begin{proof} This follows from a direct computation, using the Minkowski-Siegel mass formula for representations of integers by ternary quadratic forms and
$$
\frac{h_E}{\mu_E}=(2\pi)^{-1}\vert d \vert^{\frac{1}{2}} L\left(1,\displaystyle\left(\frac{d_E}{.}\right)\right),
$$
where $d_E$ is the discriminant of $E$.  Note that $\mathfrak w(K)=\mathfrak w(\lambda_m(K))$ and $h(K)=h(\lambda_m(K))$ for any integer $m$ dividing $\mathfrak{PQ}$. For the computation of local densities, see \cite{y}.
\end{proof}

We define
$$
b_{k,\delta m}=\displaystyle \sum_{\stackrel{[\widetilde{K}] \in \gen(K)}{\vert O(\widetilde{K})\vert=k}} \big \vert \{ \tau_x \in O(\widetilde{K}) :  Q(x)=\delta m\}\big \vert.
$$
Suppose that $\delta$ is represented by $\gen(\lambda_m(K))$. Since
$$
\frac{2}{4}b_{4,\delta m}+\frac{2}{8}b_{8,\delta m}+\frac{2}{12}b_{12,\delta m}+\frac{2}{16}b_{16,\delta m}+\frac{2}{24}b_{24,\delta m}=t_{m,\delta}\cdot 2^{\nu(m)-\nu(\mathfrak{PQ})}\cdot \frac{h_E}{\mu_E},
$$
by Lemma \ref{keylem}, we may effectively compute the number of classes of lattices in $\gen(K)$ with label $\lb 4;\delta m\rb$ once we know the labels of all the classes of lattices whose isometry groups are of order greater than $4$.

At last, in order to determine the number of classes in $\gen(K)$ with label $\lb 2 \rb$, all we need now is the class number of $K$ which is given by the following lemma.

\begin{thm} \label{maint} The class number $h(K)$ of $K$ is equal to
$$
2\mathfrak w(K)+ \displaystyle\sum_{\stackrel{m\mid \mathfrak{PQ}, \ \delta \in \{1,2\}}{\delta \ra \gen(\lambda_m(K))}}t_{m,\delta}\cdot 2^{\nu(m)-\nu(\mathfrak{PQ})}\cdot \frac{h_E}{\mu_E}+\frac{1}{3}\left(b_{12}(K)+b_{24}(K)\right)+\frac{1}{4}b_{16}(K).
$$
\end{thm}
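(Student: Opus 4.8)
The plan is to set up three bookkeeping identities for the classes in $\gen(K)$, stratified by the order of the isometry group, and then recover $h(K)$ by a one-line linear combination. Write $b_k := b_k(K)$. Since any ternary lattice with isometry group of order $48$ is isometric to $\bI$, $\bA$, or $\mathbf J$, and among these only $\bI$ is stable, I may assume $K \not\cong \bI$ (the case $K\cong\bI$ giving $h(K)=1$ at once), so that every class $\widetilde K$ in $\gen(K)$ has $\vert O(\widetilde K)\vert \in \{2,4,8,12,16,24\}$. The first identity is simply $h(K) = b_2 + b_4 + b_8 + b_{12} + b_{16} + b_{24}$. The second comes from grouping the mass by the order of the isometry group: $\w(K) = \sum_{[\widetilde K]} \vert O(\widetilde K)\vert^{-1} = \tfrac12 b_2 + \tfrac14 b_4 + \tfrac18 b_8 + \tfrac1{12} b_{12} + \tfrac1{16} b_{16} + \tfrac1{24} b_{24}$, so that $2\w(K) = b_2 + \tfrac12 b_4 + \tfrac14 b_8 + \tfrac16 b_{12} + \tfrac18 b_{16} + \tfrac1{12} b_{24}$.

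The heart of the argument is a third identity expressing the double sum in the theorem in terms of the $b_k$. Recall that $b_{k,\delta m}$ counts, over all classes $\widetilde K\in\gen(K)$ with $\vert O(\widetilde K)\vert=k$, the symmetries $\tau_x\in O(\widetilde K)$ with $Q(x)=\delta m$. Two observations pin this down. First, every symmetry $\tau_x$ of any class in $\gen(K)$ (take $x$ primitive, so $Q(x)$ is well defined) has $Q(x)=\delta m$ for a unique pair $\delta\in\{1,2\}$, $m$ an odd squarefree divisor of $\mathfrak{PQ}$: if $Q(x)$ is odd then $\z x$ splits $\widetilde K$ by Lemma \ref{trivial} and $Q(x)\mid dK$, while if $Q(x)=2m$ then $m\mid\mathfrak{PQ}$ by the discussion around (\ref{matrixm}); squarefreeness of $\mathfrak{PQ}$ forces $m$ odd squarefree in both cases. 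Conversely, such a symmetry appears in some class of $\gen(K)$ exactly when $\delta$ is represented by $\gen(\lambda_m(K))$, by Lemma \ref{keylem} (note $\gen(\lambda_m(\widetilde K))=\gen(\lambda_m(K))$, and $\lambda_m^2=\mathrm{id}$ on these classes by Lemma \ref{lambda}); hence the index set of the theorem's sum is precisely the set of $(m,\delta)$ that actually occur. Second, the explicit lists of $S(K_i(a,b))$ and of the order-$8$ lattices in Section \ref{isometry} show that a class with isometry group of order $k$ has exactly $n(k)$ symmetries, where $n(2)=0$, $n(4)=1$, $n(8)=n(12)=3$, $n(16)=5$, $n(24)=7$. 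Together these give, for each $k$, $\sum_{m,\delta} b_{k,\delta m}=n(k)\,b_k$. Summing the identity stated just before the theorem, namely $\sum_k \tfrac2k b_{k,\delta m}=t_{m,\delta}\,2^{\nu(m)-\nu(\mathfrak{PQ})}\,h_E/\mu_E$ (which itself follows by weighting Lemma \ref{keylem} over the genus and invoking Lemma \ref{locald}), over all admissible $(m,\delta)$ and interchanging the order of summation yields
$$\sum_{\substack{m\mid\mathfrak{PQ},\ \delta\in\{1,2\}\\ \delta \ra \gen(\lambda_m(K))}} t_{m,\delta}\,2^{\nu(m)-\nu(\mathfrak{PQ})}\,\frac{h_E}{\mu_E} \;=\; \sum_k \frac{2n(k)}{k}\,b_k \;=\; \tfrac12 b_4 + \tfrac34 b_8 + \tfrac12 b_{12} + \tfrac58 b_{16} + \tfrac7{12} b_{24}.$$

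Finally I add $2\w(K)$, the quantity above, and $\tfrac13(b_{12}+b_{24})+\tfrac14 b_{16}$, and check that the coefficient of each $b_k$ collapses to $1$ (for instance $\tfrac16+\tfrac12+\tfrac13=1$ for $b_{12}$, $\tfrac18+\tfrac58+\tfrac14=1$ for $b_{16}$, $\tfrac1{12}+\tfrac7{12}+\tfrac13=1$ for $b_{24}$, and the $b_2,b_4,b_8$ coefficients clearly); the sum is therefore $b_2+b_4+b_8+b_{12}+b_{16}+b_{24}=h(K)$, which is the assertion. The only step that is more than bookkeeping is the global weighting behind the pre-theorem identity, i.e. identifying $\tfrac12\sum_{[K_i]\in\gen(\lambda_m(K))} r(\delta,K_i)/\vert O(K_i)\vert$ with $\sum_k b_{k,\delta m}/k$; this relies on $\vert O(\widetilde K)\vert=\vert O(\lambda_m(\widetilde K))\vert$ (Corollary \ref{con2} together with Lemma \ref{lambda}) and on the local density computation in Lemma \ref{locald}. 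Once that is in hand the theorem is pure linear algebra.
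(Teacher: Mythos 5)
Your proposal is correct and follows essentially the same route as the paper: the identity $\sum_{m,\delta} b_{k,\delta m}=\mathfrak s_k\, b_k$ with $\mathfrak s_k=1,3,3,5,7$, combined with the mass identity $2\w(K)=\sum_k \tfrac{2}{k}b_k$ and the pre-theorem weighted representation formula, reduces everything to the same linear combination of the $b_k$. Your extra care in ruling out order-$48$ classes (only $\bI$ is stable, with $h=1$) and in checking that every symmetry of every class has $Q(x)=\delta m$ with $m\mid\mathfrak{PQ}$ is a welcome tightening of details the paper leaves implicit, but it is not a different argument.
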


\begin{proof} Note that
$$
\sum_{m\mid \mathfrak{PQ}, \ \delta \in \{1,2\}}b_{k,\delta m}=\mathfrak s_k\cdot b_k(K),
$$
where $\mathfrak s_k$ is the number of symmetries of a lattice in $\gen(K)$ whose isometry group has order $k$.  The values of $\mathfrak s_k$  is  $1,3,3,5$ or $7$ according to $k=4,8,12,16$ or $24$, respectively. Hence
$$
\frac12b_4+\frac34b_8+\frac12b_{12}+\frac58b_{16}+\frac7{12}b_{24}= \displaystyle\sum_{\stackrel{m\mid \mathfrak{PQ}, \ \delta \in \{1,2\}}{\delta \ra \gen(\lambda_m(K))}}t_{m,\delta}\cdot 2^{\nu(m)-\nu(\mathfrak{PQ})}\cdot \frac{h_E}{\mu_E}.
$$
Here $b_k=b_k(K)$ for each $k$. Therefore
\begin{eqnarray*}
h(K)&=&b_2+b_4+b_8+b_{12}+b_{16}+b_{24}\\
&=&2\mathfrak w(K)+\frac12b_4+\frac34b_8+\frac56b_{12}+\frac78b_{16}+\frac{11}{12}b_{24}\\
&=&2\mathfrak w(K)+ \displaystyle\sum_{\stackrel{m\mid \mathfrak{PQ}, \ \delta \in \{1,2\}}{\delta \ra \gen(\lambda_m(K))}}\eta_{m,\delta}\cdot 2^{\nu(m)-\nu(\mathfrak{PQ})}\cdot \frac{h_E}{\mu_E}\\
 & & +\frac13\left(b_{12}(K)+b_{24}(K)\right)+\frac14b_{16}(K).
\end{eqnarray*}
This completes the proof.
\end{proof}

\appendix

\section{$p = 3$ and $\vert O(N)\vert = 24$}

In this appendix, we treat the case $p = 3$ and $\vert O(N) \vert = 24$.  So, $\Lambda_3(L) = N \cong K_2(a,b)$, and we let $L_3 \cong \langle \epsilon_1, 3^\alpha \epsilon_2, 3^\beta \epsilon_3 \rangle$ for some $\epsilon_1, \epsilon_2, \epsilon_3 \in \z_3^\times$.   Recall that $\ord_3(dL) = \alpha + \beta \geq 2$ is always assumed, and that for $1\leq i < j \leq 3$ the integer $e_{ij}$ is defined to be $1$ or $-1$, depending on whether $-\epsilon_i\epsilon_j$ is a square or not.

Now, $dN = 3a^2b$, and $N_3 \cong \langle 2a, 6a, b \rangle$ which is isometric to $\Lambda_3(L)_3$.  This implies that both $a$ and $b$ are divisible by 3, and that Cases {\bf (1)} and {\bf (6)} in Table I cannot occur.

For simplicity, we denote each $h_{i}(N)$ by $h_i$.  Inside $O^+(N) \cong \z_2\oplus D_3$, there are two isometries of order 3 and two isometries of order 6.  If $M \in \Gamma_3^L(N)$ and $\vert O(M) \vert = 12$, then $O^+(M) \cong D_3$  does not contain any isometry of order 6.  Hence, by (\ref{h1}) and (\ref{first}),
\begin{eqnarray}
h_2 + h_4 + h_8 + h_{12} + h_{24} & = & \frac{1}{12}(w + f + 4h_{12} + 4h_{24}) \label{24eqn1}\\
12h_2 + 6h_4 + 3h_8 + 2h_{12} + h_{24} & = & w.\label{24eqn2}
\end{eqnarray}
There are seven symmetries $\sigma$ in $O(N)$: three of them with $Q_N(\sigma) = 2a$, three of them with $Q_N(\sigma) = 6a$, and one of them, which is in the center of $O(N)$,  with $Q_N(\sigma) = b$.  This, in particular, means that the number $f$ can be determined by using Table II.  The value of $w$ is determined in Table I.

\begin{lem} \label{app24}
Suppose that $M \in \Gamma_3^L(N)$.  If $\vert O(M) \vert = 24$, then $M = K_2(a, \frac{b}{9})$ or $K_2(\frac{a}{3}, b)$.  Moreover,
\begin{enumerate}
\item[(a)] if $M = K_2(\frac{a}{3}, b)$, then $\ord_3(a) = 1$ and $\alpha + 1 = \beta$;

\item[(b)] if $M = K_2(a, \frac{b}{9})$, then $\ord_3(b) = 2$ and $\alpha = 1$;

\item[(c)] $K_2(\frac{a}{3}, b)$ and $K_2(a, \frac{b}{9})$ are in the same genus if and only if $\ord_3(a) = 1$, $\ord_3(b) = 2$, and $\frac{a}{3} \not \equiv \frac{b}{9}$ mod $3$, which happens only in Case {\bf (4)} of Table I.
\end{enumerate}

\end{lem}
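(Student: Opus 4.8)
The plan is to combine the classification of ternary lattices with isometry group of order $24$ (from Section \ref{isometry}) with an explicit computation of $\Lambda_3$ on an orthogonal decomposition. First, since $\vert O(M)\vert = 24$, the discussion in Section \ref{isometry} shows that $M$ is isometric to $K_2(a',b')$ for some positive integers $a',b'$; and $M\in\gen(L)$ forces $M$ to be primitive, so I may simply take $M = K_2(a',b')$. Writing $K_2(a',b') = \left(\begin{smallmatrix} 2a'&-a'\\-a'&2a'\end{smallmatrix}\right)\perp\langle b'\rangle$ and using that for the odd prime $3$ one has $\Lambda_3(R) = \{v\in R : B(v,R)\subseteq 3\z\}$, so that $\Lambda_3$ respects orthogonal sums, I reduce the computation of $\Lambda_3(M)$ to two elementary ones: $\Lambda_3\langle b'\rangle = \langle b'\rangle$ if $3\mid b'$ and $\langle 9b'\rangle$ if $3\nmid b'$; and $\Lambda_3\left(\begin{smallmatrix}2a'&-a'\\-a'&2a'\end{smallmatrix}\right)$ is the same binary if $3\mid a'$, and is isometric to the index-$3$ sublattice $\left(\begin{smallmatrix}6a'&-3a'\\-3a'&6a'\end{smallmatrix}\right)$ if $3\nmid a'$.

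Next I exploit $M\in\gen(L)$, i.e. $M_3\cong L_3 = \langle\epsilon_1, 3^\alpha\epsilon_2, 3^\beta\epsilon_3\rangle$. Reading off the $3$-adic Jordan exponents of $M_3\cong\langle 2a',6a',b'\rangle$: if $3\nmid a'b'$ the exponent multiset is $\{0,0,1\}$, which would force $\alpha = 0$ and $\beta = 1$, contradicting $\alpha\le\beta$ together with $\alpha+\beta\ge 2$. Hence exactly one of $a',b'$ is divisible by $3$; in particular $\alpha\ge 1$ and the unimodular Jordan component of $L_3$ has rank $1$. This leaves two cases. If $3\nmid a'$, $3\mid b'$, then $\Lambda_3(M)\cong K_2(3a',b')$, and since the $D_3$-symmetric binary together with its orthogonal line is a canonical invariant of a lattice with isometry group of order $24$, comparing with $N\cong K_2(a,b)$ gives $a = 3a'$, $b = b'$, so $M = K_2(\frac a3,b)$ with $\ord_3(a) = 1$. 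Symmetrically, if $3\mid a'$, $3\nmid b'$, then $\Lambda_3(M)\cong K_2(a',9b')$, whence $a = a'$, $b = 9b'$, so $M = K_2(a,\frac b9)$ with $\ord_3(b) = 2$. In each case, matching the full Jordan decomposition of $M_3 = \langle 2a',6a',b'\rangle$ against $\langle\epsilon_1,3^\alpha\epsilon_2,3^\beta\epsilon_3\rangle$ — now tracking the square classes of the unit parts, not only the exponents, and using that the two components of $\langle 2a',6a'\rangle$ lie at consecutive $3$-adic levels with equal unit class — pins down $\alpha,\beta$ in terms of $\ord_3(a),\ord_3(b)$ and yields $\alpha+1=\beta$ in the first case and $\alpha=1$ in the second.

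For part (c), note that $K_2(\frac a3,b)$ and $K_2(a,\frac b9)$ can both occur in $\Gamma_3^L(N)$ only if the constraints extracted in (a) and (b) hold at once, which forces $\ord_3(a)=1$, $\ord_3(b)=2$, $\alpha=1$, $\beta=2$, i.e. Case {\bf (4)} of Table I. Conversely, in Case {\bf (4)} one checks directly that both lattices lie in $\gen(L)$, and whether they lie in the \emph{same} genus is decided by comparing their $3$-adic Jordan units — the only place they can differ — which reduces to a congruence on the unit parts of $2a'$ and of $b'$, equivalently the condition $\frac a3\not\equiv\frac b9 \bmod 3$.

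I expect the main obstacle to be the bookkeeping in the Jordan-decomposition matching at the prime $3$ that produces the relations $\alpha+1=\beta$ and $\alpha=1$: because the exponents $2,\alpha,\beta$ in $N_3 = \langle 9\epsilon_1, 3^\alpha\epsilon_2, 3^\beta\epsilon_3\rangle$ may collide, one cannot merely compare $3$-adic valuations but must argue carefully about how Jordan components merge and about the square classes of the unit parts forced by the very rigid shape $K_2(a,b)_3 \cong \langle 2a,6a,b\rangle$ of the image lattice.
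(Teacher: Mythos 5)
Your reduction to the dichotomy $M = K_2(a,\frac{b}{9})$ or $K_2(\frac{a}{3},b)$ is sound and is essentially the paper's own argument: the paper writes $M = K_2(c,d)$ with $\gcd(c,d)=1$, observes that exactly one of $c,d$ is divisible by $3$ (since $\ord_3(dM)\ge 2$ and $dM = 3c^2d$), and reads off $\Lambda_3(K_2(c,d)) = K_2(c,9d)$ or $K_2(3c,d)$ accordingly. Your explicit computation of $\Lambda_3$ on the orthogonal splitting, and your remark that the parameters of $K_2(\cdot,\cdot)$ are isometry invariants, merely supply detail the paper leaves implicit.

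The problem is the step you defer to ``bookkeeping'': the Jordan matching does \emph{not} yield $\alpha+1=\beta$ in case (a) and $\alpha=1$ in case (b) --- it yields the opposite. In case (a) ($3\nmid a'$, $3\mid b'$) one has $M_3\cong\langle 2a',6a',b'\rangle$ with Jordan exponents $0,\,1,\,\ord_3(b')$ and $\ord_3(b')\ge 1$; matching against $\{0,\alpha,\beta\}$ forces $\alpha=1$ and $\beta=\ord_3(b)$, with no constraint $\beta=2$. In case (b) ($3\mid a'$, $3\nmid b'$) the exponents are $\ord_3(a'),\,\ord_3(a')+1,\,0$, which forces $\beta=\alpha+1$ (and $\ord_3(b)=\ord_3(9b')=2$). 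In other words, the side conditions printed in parts (a) and (b) of the lemma are interchanged; this is confirmed by the paper's own Case {\bf (5)} ($\alpha=1$, $\beta\ge 3$), where the order-$24$ class is represented by $K_2(\frac{a}{3},b)$ even though $\alpha+1\ne\beta$, and by Case {\bf (7)} ($\alpha=2$, $\beta=3$), where it is $K_2(a,\frac{b}{9})$ even though $\alpha\ne 1$. Your write-up asserts the printed conclusions instead of deriving them, and carrying out the derivation you sketch would have exposed this; as it stands, that step of your proof is false. Part (c) is unaffected because the conjunction of the two (correctly assigned) conditions is the same either way --- though note also that in Case {\bf (4)} it is not true that ``both lattices lie in $\gen(L)$'' unconditionally; that happens exactly when $\frac{a}{3}\not\equiv\frac{b}{9}\bmod 3$, which is the content of (c), so your converse direction is circularly phrased.
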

\begin{proof}
Since $\vert O(M) \vert = 24$, there are relatively prime positive integers $c$ and $d$ such that $M = K_2(c,d)$.  Since $dM = 3c^2d$ and $\ord_3(dM) = \ord_3(dL) \geq 2$, either $3\mid c$ or $3\mid d$.  In the first case,
$$N = \Lambda_3(M) = K_2(c, 9d).$$
So, $a = c$ and $b = 9d$; hence $M = K_2(a, \frac{b}{9})$.  On the other hand, if $3 \mid d$, then $\Lambda_3(M) = K_2(3c, d)$, which means that $a = 3c$ and $d = b$.  Thus, $M = K_2(\frac{a}{3}, b)$ in this case.

Parts (a), (b), and (c) are direct consequences of an examination of the local structure of the lattices at the prime 3.
\end{proof}

\begin{lem} \label{app12}
Suppose that $M \in \Gamma_3^L(N)$.  If $\vert O(M) \vert = 12$, then $M = K_1(a, \frac{6a + b}{9})$, $\ord_3(6a + b) = 2$, and $\alpha + 1 = \beta$.
\end{lem}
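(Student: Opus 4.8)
The plan is to combine the classification of ternary isometry groups from Section \ref{isometry} with an explicit computation of $\Lambda_3(K_1(a',b'))$, then pin down all parameters by comparing $\Lambda_3(M)$ against the presentation $N\cong K_2(a,b)$ and $M_3$ against $L_3$. Since $\vert O(M)\vert=12$, Section \ref{isometry} gives $M\cong K_1(a',b')$ with $a',b'$ relatively prime positive integers and $dM=a'^2(3b'-2a')$. As $M\in\Gamma_3^L(N)\subseteq\gen(L)$, we have $M_3\cong L_3$, so $\ord_3(dM)=\alpha+\beta\ge 2$; were $3\nmid a'$, then $3b'-2a'\equiv-2a'\not\equiv 0\pmod 3$ would force $\ord_3(dM)=0$, a contradiction. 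Hence $3\mid a'$, and $3\nmid b'$ by coprimality. A short computation with the Gram matrix then gives $\Lambda_3(M)=\z x_1+\z x_2+\z(3x_3)$; since $w:=2x_1+x_2+3x_3$ is orthogonal to $\z x_1+\z x_2$ with $Q(w)=3(3b'-2a')$ (as recorded in Section \ref{isometry}, cf.\ \eqref{w3}), this is the orthogonal decomposition $\Lambda_3(M)=\z w\perp(\z x_1+\z x_2)\cong\langle 3(3b'-2a')\rangle\perp\left(\begin{smallmatrix}2a'&-a'\\-a'&2a'\end{smallmatrix}\right)$, and $w$ spans $M_\sigma$ for the order-$3$ isometry $\sigma$ of $M$.

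By Lemma \ref{conf} and Corollary \ref{con2}, $\sigma$ restricts to an order-$3$ isometry of $N=\Lambda_3(M)$ with $N_\sigma=\z w$, whose orthogonal complement in $N$ is $\z x_1+\z x_2$. On the other hand $O(K_2(a,b))\cong\z_2\oplus\z_2\oplus D_3$ has a unique subgroup of order $3$, so by \eqref{w3} every order-$3$ isometry of $N\cong K_2(a,b)$ has fixed lattice isometric to $\langle b\rangle$ with orthogonal complement isometric to $\left(\begin{smallmatrix}2a&-a\\-a&2a\end{smallmatrix}\right)$. Comparing yields $\langle 3(3b'-2a')\rangle\cong\langle b\rangle$ and $\left(\begin{smallmatrix}2a'&-a'\\-a'&2a'\end{smallmatrix}\right)\cong\left(\begin{smallmatrix}2a&-a\\-a&2a\end{smallmatrix}\right)$; the scale of the latter binary lattice is $a\z$, so $a'=a$, and then $b=3(3b'-2a)$, i.e.\ $b'=\frac{6a+b}{9}$ and $M=K_1\!\left(a,\tfrac{6a+b}{9}\right)$. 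Since $3\nmid b'$, we get $\ord_3(6a+b)=\ord_3(9b')=2$.

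For $\alpha+1=\beta$ I would compare $3$-adic Jordan decompositions. Splitting the unimodular line $\z_3 x_3$ off $M_3$ and diagonalizing its complement gives $M_3\cong\langle b'\rangle\perp\langle 2a,\,a(3b'-2a)/(2b')\rangle$, with Jordan exponents $0,\,\ord_3(a),\,\ord_3(a)+\ord_3(3b'-2a)$; matching against the exponents $0,\alpha,\beta$ of $L_3$ forces $\alpha=\ord_3(a)$ and $\beta-\alpha=\ord_3(3b'-2a)\ge 1$ (note $3\mid 3b'$ and $3\mid 2a$). If $\alpha\ge 2$ then $\ord_3(2a)\ge 2>1=\ord_3(3b')$, so $\ord_3(3b'-2a)=1$ and $\beta=\alpha+1$. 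It remains to exclude $\alpha=1$: in that case $\beta=1+\ord_3(3b'-2a)\ge 2$, and Lemma \ref{lambda} gives $N_3=\Lambda_3(L)_3\cong\langle 3\epsilon_1,3\epsilon_2,3^\beta\epsilon_3\rangle$, whose Jordan component of scale $3\z_3$ has rank $2$; but the presentation $N\cong K_2(a,b)$ with $\ord_3(a)=1$ gives $N_3\cong\langle 2a,6a,b\rangle$ with $\ord_3(2a)=1<2=\ord_3(6a)\le\ord_3(b)$, so its scale-$3\z_3$ component has rank $1$ — a contradiction. Hence $\alpha\ge 2$ and $\alpha+1=\beta$.

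The only subtle step is the exclusion of $\alpha=1$, which genuinely exploits both descriptions of $N$ at once and requires care with the $3$-adic Jordan invariants; everything else is routine manipulation of the explicit Gram matrices.
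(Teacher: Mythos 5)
Your derivation of $M=K_1\bigl(a,\tfrac{6a+b}{9}\bigr)$ and of $\ord_3(6a+b)=2$ is correct and is essentially the paper's own argument: the paper computes $\Lambda_3(K_1(c,d))=\z x_1+\z x_2+\z(3x_3)$ and reads off the Gram matrix $K_2(c,9d-6c)$ in the basis $\{x_1,x_2,2x_1+x_2+3x_3\}$, while you identify $c=a$ and $b=3(3b'-2a')$ by matching the fixed sublattice of the order-$3$ isometry and its orthogonal complement; the difference is cosmetic.

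The final clause, however, is where your argument breaks. Lemma \ref{lambda} gives $\Lambda_3(L)_3=3M_3\perp M_3'$, and the sublattice $3M_3$ carries the quadratic form scaled by $3^2$, not by $3$; so for $\alpha=1$ one gets $N_3\cong\langle 9\epsilon_1,3\epsilon_2,3^\beta\epsilon_3\rangle$, not $\langle 3\epsilon_1,3\epsilon_2,3^\beta\epsilon_3\rangle$. Its scale-$3\z_3$ Jordan component then has rank $1$, which matches exactly the rank-$1$ scale-$3\z_3$ component of $\langle 2a,6a,b\rangle$ when $\ord_3(a)=1$ and $\ord_3(b)=\beta$, so no contradiction arises. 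Nor can one be manufactured: the configuration $\alpha=1$, $\beta\ge 3$ genuinely occurs. Take $L=M=K_1(3,5)$, so $\vert O(M)\vert=12$ and $N=\Lambda_3(M)=K_2(3,27)$ with $\vert O(N)\vert=24$; then $L_3\cong\langle 5,6,\tfrac{27}{10}\rangle$, whence $\alpha=1$ and $\beta=3$, so $\alpha+1\ne\beta$ even though $M=K_1\bigl(3,\tfrac{18+27}{9}\bigr)$ and $\ord_3(45)=2$ as required. This is consistent with the appendix's own treatment of Case {\bf (5)} ($\alpha=1$, $\beta\ge 3$), where $h_{12}=1$. In other words, the relation your computation actually yields is $\beta-\alpha=\ord_3(3b'-2a)=\ord_3(b)-1$, which forces $\beta=\alpha+1$ when $\alpha\ge 2$ but not when $\alpha=1$; the clause ``$\alpha+1=\beta$'' in the statement cannot be proved in general (the paper's own proof stops after establishing $a=c$ and $b=9d-6c$ and never addresses it). So your gap is twofold: the misapplied scaling in Lemma \ref{lambda}, and the attempt to exclude a case that is in fact realizable.
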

\begin{proof}
From Section \ref{isometry}, it follows that $M = K_1(c,d)$ for some relatively prime integers $c$ and $d$.  Since $dM = c^2(3d - 2c)$ and $\ord_3(dM) \geq 2$, $c$ is divisible by 3 but $d$ is not. Consequently,
$$\Lambda_3(M) \cong \begin{pmatrix} 2c & -c & -3c\\ -c & 2c & 0 \\ -3c & 0 & 9d \end{pmatrix} \cong K_2(c, 9d - 6c),$$
hence $a = c$ and $b = 9d - 6c$.
\end{proof}

It follows from Table I (and the fact that $p = 3$ here) that $w \leq 9$.  So, we can deduce from (\ref{24eqn2}) that $h_2$ is always zero.
By Lemmas \ref{app24} and \ref{app12}, $h_{12} \leq 1$ and $h_{24} \leq 2$; furthermore, $h_{24} \leq 1$ unless we are in Case {\bf (4)}.

Suppose that we are not in Case {\bf (3)} or in Case {\bf (4)}.   Then, from Table I, $w$ is divisible by 3.  Therefore, $2h_{12} + h_{24} \equiv 0$ mod 3 by (\ref{24eqn2}), and hence $(h_{12}, h_{24}) = (0,0)$ or $(1, 1)$.    It is now ready to determine the remaining $h_i$ for all the cases in Table I.  We remind the readers that Case {\bf (1)} and {\bf (6)} cannot occur.
\vskip 2mm

\noindent Case {\bf (2)}\quad  In this case, $\alpha = 0$ and $\beta \geq 3$.  Therefore, by Lemmas \ref{app24} and \ref{app12}, both $h_{12}$ and $h_{24}$ are 0. Now, since $\Lambda_3(L)_3 = N_3 = \langle 2a, 6a, b \rangle$, we must have $\beta = 3$ and $\ord_3(a) = \ord_3(b) = 2$.  So, by Table II, $f = 15$.  Also, $w = 9$ from Table I.  Thus, by (\ref{24eqn1}) and (\ref{24eqn2}), both $h_4$ and $h_8$ are equal to 1.
\vskip 2mm

\noindent Case {\bf (3)} \quad  We know from Table I that $w = 1$.  Therefore, by (\ref{24eqn2}), $h_4 = h_8 = h_{12} = 0$ and $h_{24} = 1$.  The only lattice in $\Gamma_3^L(N)$ is either $K_2(\frac{a}{3}, b)$ or $K_2(a, \frac{b}{9})$, and we choose the one which is in $\gen(L)$.
\vskip 2mm

\noindent Case {\bf (4)} \quad  If $e_{13} = 1$, then $w = 1$ from Table I,  which implies that $h_{24} = 1$ and $h_4 = h_8 = h_{12} = 0$.  As in Case {\bf (3)}, the only lattice in $\Gamma_3^L(N)$ can be determined.

On the other hand, if $e_{13} = -1$, then $w = 2$ from Table I.  Moreover, $\frac{a}{3} \not \equiv \frac{b}{9}$ mod 3.  Thus, if $\frac{b}{9}$ and $\epsilon_1$ are in the same square class in $\q_3$, then $h_{24} = 2$ and $h_4 = h_8 = h_{12} = 0$ by (\ref{24eqn2}).  Otherwise, $h_{12} = 1$ and $h_4 = h_8 = h_{24} = 0$.
\vskip 2mm

\noindent Case {\bf (5)} \quad  In this case $\alpha = 1$ and $\beta \geq 3$.  By Table I, $w = 3$.  Since $\Lambda_3(L)_3 = N_3$, therefore  $\ord_3(a) = 1$ and $\ord_3(b) \geq 3$.  So, by Table II, $f = 13$.   Using (\ref{24eqn1}) and (\ref{24eqn2}), we can deduce that $h_{12} = h_{24} = 1$ and $h_4 = h_8 = 0$.  The lattice with isometry group of order 24 is $K_2(\frac{a}{3}, b)$.
\vskip 2mm

\noindent Case {\bf (7)} \quad In this case, $\alpha = 2$ and $\beta \geq 3$.  Because of $\Lambda_3(L)_3 = N_3$, we can deduce that $\beta = 3$, $\ord_3(a) = \ord_3(b)  =2$, and $w = 3$ or $6$ depending on whether $e_{12} = 1$ or $-1$.

If $w = 3$, then $h_4 = 0$ and
$$(h_8, h_{12}, h_{24}) = \begin{cases}
(0,1,1) & \mbox{ if $K_2(a, \frac{b}{9}) \in \gen(L)$};\\
(1,0,0) & \mbox{ otherwise}.
\end{cases}$$

If $w = 6$, then $L_3$ must be isometric to $\langle \frac{a}{3}, 2b, 6a \rangle$ and $ab$ is not a square in $\q_3$.  Thus, $f = 6$ by Table II, and
$$(h_4,h_8,h_{12}, h_{24}) = \begin{cases}
(0,1,1,1) & \mbox{ if  $K_2(a, \frac{b}{9}) \in \gen(L)$}\\
(1,0,0,0) & \mbox{ otherwise},
\end{cases}$$
by (\ref{24eqn1}) and (\ref{24eqn2}).

In any case, if $h_{24} = 1$, then the only lattice in $\Gamma_3^L(N)$ with isometry group of order 24 is $K_2(a, \frac{b}{9})$.
\vskip 2mm

\noindent Case {\bf (8)}\quad Again, using the fact that $\Lambda_3(L)_3 = N_3$, one can show that $\alpha = 3$, $\ord_3(a) = 2$, and $\ord_3(b) \geq 3$.  This implies that $h_{12} = h_{24} = 0$, and that $f = 15$ form Table II.  Since $w = 9$, we can use (\ref{24eqn1}) and (\ref{24eqn2}) to obtain $h_4 = h_8 = 1$.
\vskip 2mm

We now turn our attention to the labels of the classes in $\Gamma_3^L(N)$.  For those lattices in $\gH_{12}$ or $\gH_{24}$, their labels are determined by Lemmas \ref{app24} and \ref{app12}.  There are only three cases, namely Cases {\bf (2)}, {\bf (7)}, and {\bf (8)}, in which $h_4$ and $h_8$ are not zero.  We will determine the labels of these classes in these three cases separately.
\vskip 2mm

\noindent Case {\bf (2)}\quad Since $h_8 = 1$, it is clear that $L_3 \cong \langle \frac{b}{9}, \frac{2a}{9}, 6a \rangle$.  Moreover, for any $M \in \gH_8$, the values of $Q_M(\sigma)$, $\sigma \in S(M)$, are $\frac{b}{9}, \frac{2a}{9}$, and $6a$ respectively.  Using (\ref{24first}), we find that if $\sigma \in S(N)$ with $Q_N(\sigma) = 2a$, then $h_4(\sigma) = 1$.  Since $h_4 = 1$, therefore the label of $\gH_4$ is $\lb 4; \frac{2a}{9} \rb$.
\vskip 2mm

\noindent Case {\bf (7)} \quad Suppose that $w = 3$.  It suffices to deal with the case when $K_2(a, \frac{b}{9}) \not \in \gen(L)$.  Since $h_8 = 1$, $L_3$ must be isometric to $\langle \frac{2a}{9}, b, 6a \rangle$; hence the label of $\gH_8$ is determined as in Case {\bf (2)}.

Suppose that $w = 6$.  If $K_2(a, \frac{b}{9}) \in \gen(L)$, then $L_3 \cong \langle \frac{b}{9}, 2a, 6a \rangle$ and the label of $\gH_8$ is determined.  However, if $K_2(a, \frac{b}{9}) \not \in \gen(L)$, then $L_3 \langle \frac{a}{9}, 2b, 6a \rangle$ with $ab$ not a square in $\q_3$.  By (\ref{24first}), $h_4(\sigma) = 1$ when $Q_N(\sigma) = 6a$.  Thus the label of $\gH_4$ is $\lb 4; 6a \rb$.
\vskip 2mm

\noindent Case {\bf (8)}\quad In this case, $\alpha = 3 \leq \beta$, $\ord_3(a) = 2$, and $\ord_3(b) \geq 3$.  This shows that $L_3 \cong \langle \frac{2a}{9}, 6a, b \rangle$.  Hence the label of $\gH_8$ is determined.  It follows from (\ref{24first}) that $h_4(\sigma) = 1$ if $Q_N(\sigma) = 6a$.  Therefore, the label of $\gH_4$ is $\lb 4; 6a \rb$.

\end{document}